\documentclass[10pt,reqno]{amsart}

\usepackage{subfigure}
\usepackage{graphics}
\usepackage{psfrag}
\usepackage{epsfig}
\usepackage{pstricks}
\newtheorem{theorem}{Theorem}[section]
\newtheorem{lemma}[theorem]{Lemma}

\theoremstyle{definition}

\theoremstyle{remark}
\newtheorem{remark}[theorem]{Remark}

\numberwithin{equation}{section}

\begin{document}

\title[Correctors and Field Fluctuations...]{Correctors and Field Fluctuations for the $p_{\epsilon}(x)$-Laplacian with rough exponents}

\author[Silvia Jimenez \and Robert P. Lipton]{Silvia Jimenez \and Robert P. Lipton\\\\\tiny{Dept. of Mathematics, Louisiana State University, Baton Rouge, LA 70803, USA\\phone: +1.225.578.1665, fax: +1.225.578.4276}\\\tiny{E-mail: sjimenez@math.lsu.edu (S. Jimenez), lipton@math.lsu.edu (R. Lipton)}}

\thanks{This research was supported by the NSF Grant DMS-0807265 and the AFOSR Grant FA9550-08-1-0095}

\keywords{p-laplacian, power-law, homogenization, correctors, layered media, dispersed media, periodic domain}

\subjclass[2000]{Primary 35J66; Secondary 35A15, 35B40, 74Q05}

\textit{}
\begin{abstract}

We provide a corrector theory for the strong approximation of fields inside composites made from two materials 
with different power law behavior.  The correctors are used to develop bounds on the local singularity strength 
for gradient fields inside micro-structured media. The bounds are multi-scale in nature and can be used to measure 
the amplification of applied macroscopic fields by the microstructure. 
\end{abstract}


\maketitle

\section{Introduction}

In this article we consider boundary value problems associated with fields inside heterogeneous materials made from 
two power-law materials.  The geometry of the composite is periodic and is specified by the indicator function of the 
sets occupied by  each of the materials.  The indicator function of material one and two are denoted by $\chi_1$ and 
$\chi_2$, where $\chi_{1}(y) =1$ in material one and is zero outside and $\chi_{2}(y)=1-\chi_{1}(y)$.  The constitutive 
law for the heterogeneous medium is described by $A:\mathbb{R}^{n}\times\mathbb{R}^{n}\rightarrow\mathbb{R}^{n}$,
\begin{equation}
	\label{A}
	\displaystyle
A\left(y,\xi\right)=\sigma(y)\left|\xi\right|^{p(y)-2}\xi,
\end{equation}	
with $\sigma(y)=\chi_{1}\left(y\right)\sigma_{1}+\chi_{2}\left(y\right)\sigma_{2}$, and 
$p(y)=\chi_{1}\left(y\right)p_{1}+\chi_{2}\left(y\right)p_{2}$, periodic in $y$, with unit period cell $Y=(0,1)^n$.  
This simple constitutive model is used in the mathematical description of many physical phenomena including plasticity \cite{PonteCastaneda1997,PonteCastaneda1999,Suquet1993,Idiart2008}, nonlinear dielectrics \cite{Garroni2001,Garroni2003,Kohn1998,Talbot1994,Talbot1994-2}, and fluid flow \cite{Ruzicka2000,Antontsev2006}.  We 
study the problem of periodic homogenization associated with the solutions $u_{\epsilon}$ to the problems
\begin{equation}
	\label{01}
	\displaystyle
-div\left(A\left(\frac{x}{\epsilon},\nabla u_{\epsilon}\right)\right)=f \text{ on $\Omega$, $u_{\epsilon}\in W_{0}^{1,p_{1}}(\Omega),$}
\end{equation}
where $\Omega$ is a bounded open subset of $\mathbb{R}^{n}$, $2\leq p_{1}\leq p_{2}$, $f\in W^{-1,q_{2}}(\Omega)$, and 
$1/p_{1}+1/q_{2}=1$.  The differential operator appearing on the left hand side of (\ref{01}) is commonly referred to 
as the $p_{\epsilon}(x)$-Laplacian. For the case at hand, the exponents $p(x)$ and coefficients $\sigma(x)$ are  taken 
to be simple functions.  Because the level sets associated with these functions can be quite general and irregular they 
are referred to as rough exponents and coefficients.  In this context all solutions are understood in the usual weak 
sense \cite{Zhikov1994}.

One of the basic problems in homogenization theory is to understand the asymptotic behavior as $\epsilon\rightarrow0$, 
of the solutions $u_{\epsilon}$ to the problems (\ref{01}).  It was proved in \cite{Zhikov1994} that 
$\{u_{\epsilon}\}_{\epsilon>0}$ converges weakly in $W^{1,p_{1}}(\Omega)$ to the solution $u$ of the {\em homogenized} 
problem  
\begin{equation}
	\label{HOMOG}
	\displaystyle
-div\left(b\left(\nabla u\right)\right)=f \text{ on $\Omega$, $u\in W_{0}^{1,p_{1}}(\Omega)$},
\end{equation}
where the monotone map $b:\mathbb{R}^{n}\rightarrow\mathbb{R}^{n}$ (independent of $f$ and $\Omega$) can be obtained 
by solving an auxiliary problem for the operator (\ref{01}) on a periodicity cell.

The notion of homogenization is intimately tied to the $\Gamma$-convergence of a suitable family of energy functionals 
$I_{\epsilon}$ as $\epsilon\rightarrow0$ \cite{DalMaso1993}, \cite{Zhikov1994}.  Here the connection is natural in that 
the family of boundary value problems (\ref{HOMOG}) correspond to the Euler equations of the associated energy functionals 
$I_{\epsilon}$ and the solutions $u_{\epsilon}$ are their minimizers.  The homogenized solution is precisely the minimizer 
of the $\Gamma$-limit of the sequence $\{I_{\epsilon}\}_{\epsilon>0}$.  The connections between $\Gamma$ limits and 
homogenization for the power-law materials studied here can be found in \cite{Zhikov1994}.  The explicit formula for the 
$\Gamma$-limit of the associated energy functionals for layered materials was obtained recently in \cite{Pedregal2006}. 

Homogenization theory relates the average behavior seen at large length scales to the underlying heterogeneous structure.  
It allows one to approximate $\{\nabla u_{\epsilon}\}_{\epsilon>0}$ in terms of $\nabla u$, where $u$ is the solution of 
the homogenized problem (\ref{HOMOG}).  The homogenization result given in \cite{Zhikov1994} shows that the average of 
the error incurred in this approximation of $\nabla u_{\epsilon}$ decays to $0$.  

On the other hand it is well known \cite{Kelly1986} that the presence of large local fields either electric or mechanical 
often precede the onset of material failure. For composite materials the presence of  the heterogeneity can amplify the 
applied load and generate local fields with very high intensities. The goal of the analysis presented here is to develop 
tools for quantifying the effect of load transfer between length scales inside heterogeneous media. In this article we 
provide methods for quantitatively measuring the excursions of local fields generated by applied loads. We present a new 
corrector result that delivers  an approximation to $\nabla u_{\epsilon}$ up to an error that converges to zero strongly 
in the norm. Our approach delivers strong approximations for the gradients inside each phase, see, Section~\ref{CorrectorSection}.

The strong approximations are used to develop new tools that provide lower bounds on the local gradient field intensity 
inside micro-structured media.  The bounds are expressed in terms of the $L^q$  norms of gradients of the solutions of 
the local corrector problems.  These results provide a lower bound on the amplification of the macroscopic (average) gradient 
field by the microstructure.  The bounds are shown to hold for every $q$ for which the gradient of the corrector is $L^q$ 
integrable see, Section~\ref{SectionFluctuations}.  The critical values of $q$ for which these moments diverge provide lower 
bounds on the $L^q$ integrability of the gradients $\nabla u_{\epsilon}$ when $\epsilon$ is sufficiently small.  In 
\cite{Lipton2006}, similar lower bounds are established for field concentrations for mixtures of linear electrical conductors 
in the context of two scale convergence. 
	
The corrector results are presented for layered materials and for dispersions of inclusions embedded inside a host medium.  
For the dispersed microstructures the included material is taken to have the lower power-law exponent than that of the host 
phase.  For both of these cases it is shown that the homogenized solution lies in $W_{0}^{1,p_{2}}(\Omega)$.  We use this 
higher order integrability to provide an algorithm for building correctors and construct a sequence of strong approximations 
to the gradients inside each material, see Theorem \ref{corrector}.  When the host phase has a lower power-law exponent than 
the included phase one can only conclude that the homogenized solution lies in $W_{0}^{1,p_{1}}(\Omega)$ and the techniques 
developed here do not apply. 		
	
The earlier work of \cite{DalMaso1990} provides the corrector theory for homogenization of monotone operators that in our 
case applies to composite materials made from constituents having the same power-law growth but with rough coefficients 
$\sigma(x)$.  The corrector theory for monotone operators with uniform power law growth is developed further in 
\cite{Efendiev2004}, where it is used to extend multiscale finite element methods to nonlinear equations for stationary 
random media.  Recent work considers the homogenization of $p_{\epsilon}(x)$-Laplacian boundary value problems for smooth 
exponential functions $p_{\epsilon}(x)$ uniformly converging to a limit function $p_{0}(x)$ \cite{Piatnitski2008}.  There 
the convergence of the family of solutions for these homogenization problems is expressed in the topology of 
$L^{p_{0}(\cdot)}(\Omega)$ \cite{Piatnitski2008}.	
	
The paper is organized as follows. In Section~2, we state the problem and formulate the main results.  Section~3 contains 
the proof of the properties of the homogenized operator.  Section~4 is devoted to proving the higher order integrability of 
the homogenized solution.  Section~5 contains lemmas and  integral inequalities for the correctors used to prove the main 
results.  Section~6 contains the proof of the main results. 
	
\section{Statement of the Problem and Main Results}

\subsection{Notation}	

In this paper we consider two nonlinear power-law materials periodically distributed inside a domain $\Omega\subset\mathbb{R}^{n}$.  
The periodic mixture is described as follows.  We introduce the unit period cell $Y=(0,1)^{n}$ of the microstructure.  Let $F$ 
be an open subset of $Y$ of material one, with smooth boundary $\partial F$, such that $\overline{F}\subset Y$.  The function 
$\chi_{1}(y)=1$ inside $F$ and $0$ outside and $\chi_{2}(y)=1-\chi_{1}(y)$.  We extend $\chi_{1}(y)$ and $\chi_{2}(y)$ by 
periodicity to $\mathbb{R}^{n}$ and the $\epsilon$-periodic mixture inside $\Omega$ is described by the oscillatory characteristic 
functions $\chi_{1}^{\epsilon}(x)=\chi_{1}(x/\epsilon)$ and $\chi_{2}^{\epsilon}(x)=\chi_{2}(x/\epsilon)$.  Here we will consider 
the case where $F$ is given by a simply connected inclusion embedded inside a  host material (see Figure~\ref{fig:disperse}).  A 
distribution of such inclusions is commonly referred to as a periodic dispersion of inclusions.
		
\begin{figure}[h]
	\centering
  \psfrag{F}[c]{$F$}
  \psfrag{Y}[c]{$Y$}
	\includegraphics[width=0.16\textwidth]{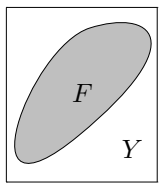}
	\caption{Unit cell: Dispersed Microstructure}
	\label{fig:disperse}
\end{figure}

In this article we also consider layered materials.  For this case the representative unit cell consists of a layer of material one, 
denoted by $R_{1}$, sandwiched between layers of material two, denoted by $R_{2}$.  The interior boundary of $R_{1}$ is denoted by 
$\Gamma$.  Here $\chi_{1}(y)=1$ for $y\in R_{1}$ and $0$ in $R_{2}$, and $\chi_{2}(y)=1-\chi_{1}(y)$ (see Figure~\ref{fig:layer}).
 
\begin{figure}[h]
	\centering
  \psfrag{R1}[c]{$R_2$}
  \psfrag{R2}[c]{$R_1$}
  \psfrag{R3}[c]{$R_2$}
  \psfrag{G}[c]{$\Gamma$}
	\includegraphics[width=0.16\textwidth]{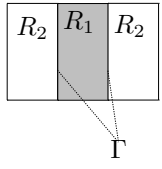}
	\caption{Unit cell: Layered material}
	\label{fig:layer}
\end{figure}

On the unit cell $Y$, the constitutive law for the nonlinear material is given by (\ref{A}) with exponents $p_{1}$ and $p_{2}$ 
satisfying $2\leq p_{1}\leq p_{2}$.  Their H\"{o}lder conjugates are denoted by $q_{2}=p_{1}/(p_{1}-1)$ and $q_{1}=p_{2}/(p_{2}-1)$ 
respectively.  For $i=1,2$, $W_{per}^{1,p_{i}}(Y)$ denotes the set of all functions $u\in W^{1,p_{i}}(Y)$ with mean value zero 
that have the same trace on the opposite faces of $Y$.  Each function $u\in W_{per}^{1,p_{i}}(Y)$ can be extended by periodicity 
to a function of $W_{loc}^{1,p_{i}}(\mathbb{R}^{n})$.

The Euclidean norm and the scalar product in $\mathbb{R}^{n}$ are denoted by $\left|\cdot\right|$ and $\left(\cdot,\cdot\right)$, 
respectively.  If $A\subset\mathbb{R}^{n}$, $\left|A\right|$ denotes the Lebesgue measure and $\chi_{A}(x)$ denotes its 
characteristic function.

The constitutive law for the $\epsilon$-periodic composite is described by $A_{\epsilon}(x,\xi)=A\left(x/\epsilon,\xi\right)$, 
for every $\epsilon>0$, for every $x\in\Omega$, and for every $\xi\in\mathbb{R}^{n}$.

A calculation shows \cite{Bystrom2005} that there exist constants $C_{1},C_{2}>0$ such that for 
almost every $x\in\mathbb{R}^{n}$ and for every $\xi\in\mathbb{R}^{n}$, $A$ satisfies the following 
\begin{enumerate}
\item For all $\xi\in\mathbb{R}^{n}$, $A(\cdot,\xi)$ is $Y$-periodic and Lebesgue measurable.
\item $\left|A(y,0)\right|=0$ for all $y\in\mathbb{R}^{n}$.
\item Continuity
\begin{align}
	\label{ConA}
	\displaystyle \left|A(y,\xi_{1})-A(y,\xi_{2})\right| 	
	&\leq C_{1}\left[\chi_{1}(y)\left|\xi_{1}-\xi_{2}\right|(1+\left|\xi_{1}\right|+\left|\xi_{2}\right|)^{p_{1}-2}\right.\notag\\
	&\quad + \left.\chi_{2}(y)\left|\xi_{1}-\xi_{2}\right|(1+\left|\xi_{1}\right|+\left|\xi_{2}\right|)^{p_{2}-2}\right] 		
\end{align}  
\item Monotonicity  
\begin{align}
	\label{MonA}
	\displaystyle
	\left(A(y,\xi_{1})-A(y,\xi_{2}),\xi_{1}-\xi_{2}\right)\geq 		C_{2}\left(\chi_{1}(y)\left|\xi_{1}-\xi_{2}\right|^{p_{1}}+\chi_{2}(y)\left|\xi_{1}-\xi_{2}\right|^{p_{2}}\right)
\end{align}
\end{enumerate}

\subsection{Dirichlet Boundary Value Problem}

We shall consider the following Dirichlet boundary value problem
\begin{equation}
	\label{Dirichlet}
	\begin{cases}
		-div\left(A_{\epsilon}\left(x,\nabla u_{\epsilon}\right)\right)=f \text{ on $\Omega$},\\
		u_{\epsilon}\in W_{0}^{1,p_{1}}(\Omega);  	
	\end{cases}
\end{equation}
where $f\in W^{-1,q_{2}}(\Omega)$.
 
The following homogenization result holds.

\begin{theorem}[Homogenization Theorem (see \cite{Zhikov1994})]
\label{homogenization}
As $\epsilon\rightarrow0$, the solutions $u_{\epsilon}$ of (\ref{Dirichlet}) converge weakly to $u$ 
in $W^{1,p_{1}}(\Omega)$, where $u$ is the solution of 
\begin{equation}
	\label{homogenized}
	\displaystyle -div\left(b\left(\nabla u\right)\right)=f \text{ on $\Omega$},
\end{equation}	
\begin{equation}
	\label{u}
	\displaystyle u \in W_{0}^{1,p_{1}}(\Omega);  	
\end{equation}
and the function $b:\mathbb{R}^{n}\rightarrow\mathbb{R}^{n}$ is defined for all $\xi\in\mathbb{R}^{n}$ by
\begin{equation}
	\label{b}
	\displaystyle b(\xi)=\int_{Y}A(y,p(y,\xi))dy,
\end{equation}
where $p:\mathbb{R}^{n}\times\mathbb{R}^{n}\rightarrow\mathbb{R}^{n}$ is defined by 
\begin{equation}
	\label{p}
   p(y,\xi)=\xi+\nabla\upsilon_{\xi}(y), 
\end{equation}
where $\upsilon_{\xi}$ is the solution to the cell problem:
\begin{equation}
	\label{cell}
	\begin{cases}
		\displaystyle
		\int_{Y}\left(A(y,\xi+\nabla\upsilon_{\xi}),\nabla w\right)dy=0 \text{, for every $w\in W_{per}^{1,p_{1}}(Y)$},\\
		\upsilon_{\xi}\in W_{per}^{1,p_{1}}(Y)	
	\end{cases}
\end{equation}	
\end{theorem}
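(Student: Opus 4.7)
The plan is to follow Tartar's method of oscillating test functions combined with Minty's monotonicity trick. First, I would derive a priori bounds by testing (\ref{Dirichlet}) with $u_\epsilon$ and applying the coercivity that follows from setting $\xi_2=0$ in (\ref{MonA}), together with Poincar\'e's and Young's inequalities. This yields that $\{u_\epsilon\}$ is bounded in $W_0^{1,p_1}(\Omega)$, with the sharper phase-wise estimate
\begin{equation*}
\int_\Omega \bigl(\chi_1^\epsilon |\nabla u_\epsilon|^{p_1} + \chi_2^\epsilon |\nabla u_\epsilon|^{p_2}\bigr)\,dx \le C,
\end{equation*}
and, via the growth bound (\ref{ConA}) with $\xi_2=0$, that $\{A_\epsilon(\cdot,\nabla u_\epsilon)\}$ is bounded in an appropriate Lebesgue space. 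Passing to a subsequence, $u_\epsilon \rightharpoonup u$ in $W_0^{1,p_1}(\Omega)$ and $A_\epsilon(\cdot,\nabla u_\epsilon) \rightharpoonup \Xi$ weakly; taking the limit in the weak form of (\ref{Dirichlet}) gives $-\mathrm{div}\,\Xi = f$ in $\mathcal{D}'(\Omega)$.

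To identify $\Xi = b(\nabla u)$, fix $\xi \in \mathbb{R}^n$, let $\upsilon_\xi \in W_{per}^{1,p_1}(Y)$ solve the cell problem (\ref{cell}), and introduce the oscillating test function
\begin{equation*}
\Phi_\xi^\epsilon(x) := \xi\cdot x + \epsilon\, \upsilon_\xi(x/\epsilon), \qquad \nabla \Phi_\xi^\epsilon(x) = p(x/\epsilon,\xi).
\end{equation*}
Standard periodic mean-value arguments give $\nabla \Phi_\xi^\epsilon \rightharpoonup \xi$ in $L^{p_1}$ and $A_\epsilon(\cdot,\nabla \Phi_\xi^\epsilon) \rightharpoonup b(\xi)$ weakly, while (\ref{cell}) rescales to $-\mathrm{div}\,A_\epsilon(\cdot,\nabla \Phi_\xi^\epsilon)=0$ in $\Omega$. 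For any nonnegative $\varphi \in C_c^\infty(\Omega)$, the monotonicity (\ref{MonA}) furnishes
\begin{equation*}
\int_\Omega \varphi\,\bigl(A_\epsilon(x,\nabla u_\epsilon)-A_\epsilon(x,\nabla\Phi_\xi^\epsilon)\bigr)\cdot\bigl(\nabla u_\epsilon-\nabla\Phi_\xi^\epsilon\bigr)\,dx \ge 0.
\end{equation*}
Expanding the four bilinear terms, the two diagonal ones are computed by testing the PDEs for $u_\epsilon$ and $\Phi_\xi^\epsilon$ against $\varphi u_\epsilon$ and $\varphi \Phi_\xi^\epsilon$, respectively; the two mixed terms, after integration by parts, become products of a weakly convergent sequence with the strongly convergent $\epsilon\upsilon_\xi(\cdot/\epsilon)\to 0$ in $L^{p_1}(\Omega)$ plus weak-strong pairings involving $\nabla\varphi$. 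Passing to the limit delivers
\begin{equation*}
\int_\Omega \varphi\,(\Xi - b(\xi))\cdot(\nabla u - \xi)\,dx \ge 0.
\end{equation*}

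Minty's trick finishes the argument: continuity and strict monotonicity of $b$ (inherited from (\ref{ConA})--(\ref{MonA}) via the continuous dependence of $\upsilon_\xi$ on $\xi$, to be established separately) let us choose $\xi = \nabla u(x_0) + t\eta$ at a.e.\ Lebesgue point $x_0$ of $\nabla u$ and let $t\to 0^\pm$, forcing $\Xi = b(\nabla u)$ a.e.\ in $\Omega$. Uniqueness for the homogenized problem (\ref{homogenized}) then promotes the convergence to the full sequence. The main obstacle I anticipate is the limit passage in the two mixed bilinear terms: because neither $\nabla u_\epsilon$ nor $\nabla \Phi_\xi^\epsilon$ converges strongly and the two phases of $A_\epsilon$ carry different growth exponents $p_1\le p_2$, one cannot simply pair the weak limits. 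Success hinges on exploiting the affine-plus-corrector structure of $\Phi_\xi^\epsilon$ together with the vanishing divergences $\mathrm{div}\,A_\epsilon(\cdot,\nabla u_\epsilon)=-f$ and $\mathrm{div}\,A_\epsilon(\cdot,\nabla \Phi_\xi^\epsilon)=0$, so that after integration by parts every troublesome product reduces to a weak-strong pairing.
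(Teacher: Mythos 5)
This theorem is cited from Zhikov (the header reads ``see \cite{Zhikov1994}''); the paper supplies no proof. Zhikov's own argument is variational: one shows $\Gamma$-convergence (equivalently, convergence of the infima $E^\epsilon_1\to E_1$, as invoked in Section~4 around (\ref{energy1})--(\ref{Ei})) of the energy functionals whose Euler equations are (\ref{Dirichlet}) and (\ref{homogenized}), and weak convergence of the minimizers follows from coercivity and lower semicontinuity. Your proposal instead follows Tartar's oscillating-test-function method with Minty's trick, which is a genuinely different route. For monotone operators with a single growth exponent $p$ the two routes are interchangeable, and Tartar's method has the advantage of applying to non-variational monotone $A$; here, however, your route runs into a concrete difficulty that the energy method does not see.

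The gap is in the assertion that ``after integration by parts every troublesome product reduces to a weak-strong pairing.'' In the two-exponent setting, the full-domain pairings produced by integration by parts are not between conjugate spaces. For instance, to pass to the limit in $\int_\Omega \varphi\, A_\epsilon(\nabla u_\epsilon)\cdot\nabla u_\epsilon\,dx$ one writes it as $\langle f,\varphi u_\epsilon\rangle - \int_\Omega u_\epsilon\, A_\epsilon(\nabla u_\epsilon)\cdot\nabla\varphi\,dx$; splitting the last integral by phase, the piece $\int_\Omega u_\epsilon\,\chi_2^\epsilon A_\epsilon(\nabla u_\epsilon)\cdot\nabla\varphi\,dx$ pairs a sequence bounded only in $L^{q_1}(\Omega)$ (the a priori bound (\ref{aprioribound}) gives no more for $\chi_2^\epsilon A_\epsilon$) against $u_\epsilon$, which Rellich makes strongly convergent only in $L^r(\Omega)$ for $r<p_1^*$. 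Since the H\"older conjugate of $q_1$ is $p_2$, this pairing requires strong $L^{p_2}$-convergence of $u_\epsilon$, i.e.\ $p_2< p_1^*$, which is not assumed. The same obstruction reappears in the mixed terms $\int\varphi A_\epsilon(\nabla u_\epsilon)\cdot\nabla\Phi_\xi^\epsilon$ and $\int\varphi A_\epsilon(\nabla\Phi_\xi^\epsilon)\cdot\nabla u_\epsilon$: after integrating by parts you need $\epsilon\upsilon_\xi(\cdot/\epsilon)\to 0$ strongly in $L^{p_2}(\Omega)$, but $\upsilon_\xi\in W^{1,p_1}_{per}(Y)$ only guarantees $\upsilon_\xi\in L^{p_1^*}(Y)$, and the cell-problem energy bound controls $\chi_2|\nabla\upsilon_\xi|$ in $L^{p_2}$ but not $\upsilon_\xi$ itself in $L^{p_2}$. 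Without an additional exponent restriction (or a separate higher-integrability argument for $u$ and $\upsilon_\xi$, which the paper establishes later in Theorem~\ref{regularity} but only for dispersions and layers), these limits do not close. The $\Gamma$-convergence argument of \cite{Zhikov1994} sidesteps the issue entirely by working at the level of the energies $\int_\Omega\tilde f_\epsilon(x,\nabla u)\,dx$, never requiring a div--curl-type pairing between the flux and the gradient across mismatched Lebesgue exponents.
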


\begin{remark}
The following a priori bound is satisfied 
\begin{equation}
	\label{aprioribound}
	\displaystyle
	\sup_{\epsilon>0}\left(\int_{\Omega}\chi_{1}^{\epsilon}(x)\left|\nabla u_{\epsilon}(x)\right|^{p_{1}}dx+\int_{\Omega}\chi_{2}^{\epsilon}(x)\left|\nabla u_{\epsilon}(x)\right|^{p_{2}}dx\right)\leq C<\infty,
\end{equation}
where $C$ does not depend on $\epsilon$.  The proof of this bound is given in Lemma~\ref{proofaprioribound}.
\end{remark}

\begin{remark}
The function $b$, defined in (\ref{b}), satisfies the following properties for every $\xi_{1},\xi_{2}\in\mathbb{R}^{n}$
\begin{enumerate}
\item Continuity: There exists a positive constant $\overline{C_{1}}$ such that
\begin{align}
	\label{Conb}
	\left|b(\xi_{1}) - b(\xi_{2})\right|&\leq \overline{C_{1}}\left[\left|\xi_{1}-\xi_{2}\right|^{\frac{1}{p_{1}-1}}\left(1+\left|\xi_{1}\right|^{p_{1}}+\left|\xi_{2}\right|^{p_{1}}+\left|\xi_{1}\right|^{p_{2}}+\left|\xi_{2}\right|^{p_{2}}\right)^{\frac{p_{1}-2}{p_{1}-1}}\right.\notag\\
		&\quad +\left. \left|\xi_{1}-\xi_{2}\right|^{\frac{1}{p_{2}-1}}\left(1+\left|\xi_{1}\right|^{p_{1}}+\left|\xi_{2}\right|^{p_{1}}+\left|\xi_{1}\right|^{p_{2}}+\left|\xi_{2}\right|^{p_{2}}\right)^{\frac{p_{2}-2}{p_{2}-1}}\right]		
\end{align} 
\item Monotonicity: There exists a positive constant $\overline{C_{2}}$ such that
\begin{align}
	\label{Monb}
	\displaystyle &\left(b(\xi_{1})-b(\xi_{2}),\xi_{1}-\xi_{2}\right)\notag\\
	&\quad \geq \overline{C_{2}}\left(\int_{Y}\chi_{1}(y)\left|p(y,\xi_{1})-p(y,\xi_{2})\right|^{p_{1}}dy+\int_{Y}\chi_{2}(y)\left|p(y,\xi_{1})-p(y,\xi_{2})\right|^{p_{2}}dy\right)\notag\\
	&\quad\geq0
\end{align} 
\end{enumerate}

Properties (\ref{Conb}) and (\ref{Monb}) are proved in Section~\ref{propertiesofb}.
\end{remark}

\begin{remark}	
Since the solution $\upsilon_{\xi}$ of (\ref{cell}) can be extended by periodicity to a function of 
$W_{loc}^{1,p_{1}}(\mathbb{R}^{n})$, then (\ref{cell}) is equivalent to $-div(A(y,\xi+\nabla \upsilon_{\xi}(y)))=0$ 
over $\textit{D}^{'}(\mathbb{R}^{n})$, i.e., 
\begin{equation}
	\label{div with p}
	-div\left(A(y,p(y,\xi))\right)=0 \text{ in $\textsl{D}^{'}(\mathbb{R}^{n})$ for every $\xi\in\mathbb{R}^{n}$}.
\end{equation}

Moreover, by (\ref{cell}), we have
\begin{equation}
	\label{inner product a with p}
	\displaystyle
	\int_{Y}\left(A(y,p(y,\xi)),p(y,\xi)\right)dy=\int_{Y}\left(A(y,p(y,\xi)),\xi\right)dy=\left(b(\xi),\xi\right).
\end{equation}
\end{remark}

For $\epsilon>0$, define $p_{\epsilon}:\mathbb{R}^{n}\times\mathbb{R}^{n}\rightarrow\mathbb{R}^{n}$ by
\begin{equation}
	\label{p epsilon}
	\displaystyle p_{\epsilon}(x,\xi)=p\left(\frac{x}{\epsilon},\xi\right)=\xi+\nabla\upsilon_{\xi}\left(\frac{x}{\epsilon}\right),
\end{equation}
where $\upsilon_{\xi}$ is the unique solution of (\ref{cell}).  The functions $p$ and $p_{\epsilon}$ are easily 
seen to have the following properties
\begin{equation}
	\label{p1}
	\text{$p(\cdot,\xi)$ is $Y$-periodic and $p_{\epsilon}(x,\xi)$ is $\epsilon$-periodic in $x$.}
\end{equation}
\begin{equation}
	\label{p2}
	\displaystyle \int_{Y}p(y,\xi)dy=\xi.
\end{equation}
\begin{equation}
	\label{p3}
	p_{\epsilon}(\cdot,\xi)\rightharpoonup\xi \text{ in $L^{p_{1}}(\Omega;\mathbb{R}^{n})$ as $\epsilon\rightarrow0$.}
\end{equation}
\begin{equation}
	\label{p4}
	p(y,0)=0 \text{ for almost every $y$.}
\end{equation}
\begin{equation}
	\label{p5}
	A\left(\frac{\cdot}{\epsilon},p_{\epsilon}(\cdot,\xi)\right)\rightharpoonup b(\xi) \text{ in $L^{q_{2}}(\Omega;\mathbb{R}^{n})$, as $\epsilon\rightarrow 0$}.
\end{equation}

We now state the higher order integrability properties of the homogenized solution for periodic dispersions of inclusions 
and layered microgeometries.

\begin{theorem}
\label{regularity}
Given a periodic dispersion of inclusions or a layered material then the solution $u$ of (\ref{homogenized}) belongs to 
$W_{0}^{1,p_{2}}(\Omega)$.
\end{theorem}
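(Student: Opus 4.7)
The strategy is to exploit the a priori estimate (\ref{aprioribound}), which furnishes a uniform bound on $\nabla u_\epsilon$ in $L^{p_2}$ over the stiff phase $R_2^{\epsilon}:=\{x\in\Omega:\chi_2(x/\epsilon)=1\}$. The goal is to construct, for each $\epsilon>0$, a function $\tilde{u}_\epsilon\in W_0^{1,p_2}(\Omega)$ agreeing with $u_\epsilon$ on $R_2^{\epsilon}$ and whose $W^{1,p_2}(\Omega)$ norm is controlled by $\|\nabla u_\epsilon\|_{L^{p_2}(R_2^{\epsilon})}$, uniformly in $\epsilon$. A weak $W_0^{1,p_2}$-limit of $\tilde{u}_\epsilon$ is then identified with the homogenized solution $u$, forcing $u\in W_0^{1,p_2}(\Omega)$.

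The first step is the construction of a uniformly bounded family of extension operators $P_\epsilon:W^{1,p_2}(R_2^{\epsilon})\to W^{1,p_2}(\Omega)$. For a periodic dispersion of inclusions the host phase is connected, and one can invoke the classical extension theorem of Acerbi--Chiad\`o Piat--Dal Maso--Percivale for periodic perforated domains to obtain $P_\epsilon$ with operator norm independent of $\epsilon$. For the layered geometry, $R_2^{\epsilon}$ consists of disjoint slabs of width $O(\epsilon)$ separated by soft slabs of width $O(\epsilon)$, and the extension can be built explicitly by linear interpolation across each soft slab in the normal direction: tangential derivatives pass through essentially unchanged, while the normal derivative of the interpolant is a difference quotient of the traces of $u_\epsilon$ on adjacent stiff slabs, which by Minkowski's inequality and a scaling argument on the reference cell $Y$ is controlled in $L^{p_2}$ by $\|\nabla u_\epsilon\|_{L^{p_2}(R_2^{\epsilon})}$.

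Setting $\tilde{u}_\epsilon:=P_\epsilon\bigl(u_\epsilon|_{R_2^{\epsilon}}\bigr)$, (\ref{aprioribound}) yields $\sup_\epsilon\|\tilde{u}_\epsilon\|_{W^{1,p_2}(\Omega)}<\infty$, so along a subsequence $\tilde{u}_\epsilon\rightharpoonup\tilde{u}$ weakly in $W_0^{1,p_2}(\Omega)$. To conclude, I would identify $\tilde{u}=u$: by Rellich compactness $u_\epsilon\to u$ strongly in $L^{p_1}(\Omega)$ and $\tilde{u}_\epsilon\to\tilde{u}$ strongly in $L^{p_1}(\Omega)$; since $\tilde{u}_\epsilon=u_\epsilon$ on $R_2^{\epsilon}$ whose density is bounded below by the volume fraction $\theta_2>0$, standard oscillation/two-scale arguments give $\tilde{u}=u$ almost everywhere. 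The boundary condition for $\tilde{u}_\epsilon$ preserved by $P_\epsilon$ then places $u$ in $W_0^{1,p_2}(\Omega)$.

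The main obstacle is the extension construction in the layered case, since the stiff phase is globally disconnected across the $\epsilon$-soft slabs. The delicate point is showing the interpolation preserves the $L^{p_2}$ gradient bound with a constant independent of $\epsilon$, which requires careful trace and difference-quotient estimates on thin slabs and use of the $\epsilon$-periodicity; a secondary technical point is ensuring $\|\tilde{u}_\epsilon-u_\epsilon\|_{L^{p_1}(\Omega)}\to 0$ so that the weak $W^{1,p_1}$-limit of $u_\epsilon$ can be matched with the weak $W^{1,p_2}$-limit of $\tilde{u}_\epsilon$.
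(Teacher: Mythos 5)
Your proposal takes a genuinely different route, but for the layered material the key step fails. You need extension operators $P_\epsilon:W^{1,p_2}(R_2^\epsilon)\to W^{1,p_2}(\Omega)$ with operator norm independent of $\epsilon$, and for the layered geometry no such family exists because $R_2^\epsilon$ is a union of \emph{disjoint} slabs. Take $w_\epsilon$ locally constant on each stiff slab, alternating between $0$ and $1$ on consecutive slabs: then $\nabla w_\epsilon\equiv 0$ on $R_2^\epsilon$, yet any $W^{1,p_2}(\Omega)$ extension must have normal derivative of order $\epsilon^{-1}$ across the soft slabs, whose total measure is the fixed fraction $\theta_1|\Omega|$, so $\|\nabla P_\epsilon w_\epsilon\|_{L^{p_2}(\Omega)}\gtrsim\epsilon^{-1}\to\infty$. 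The same obstruction shows the specific claim in your interpolation construction is false: the normal derivative of the interpolant is $(u_\epsilon^R-u_\epsilon^L)$ divided by the soft-slab width, and the jump $u_\epsilon^R-u_\epsilon^L=\int_{\text{soft slab}}\partial_1 u_\epsilon\,dx_1$ is governed by $\nabla u_\epsilon$ on $R_1^\epsilon$, where (\ref{aprioribound}) gives only $L^{p_1}$ control, not by $\|\nabla u_\epsilon\|_{L^{p_2}(R_2^\epsilon)}$. So the difference-quotient estimate you assert cannot hold, and the layered case of the theorem is out of reach by this route. For dispersions, where the $p_2$-phase is the connected host, the Acerbi--Chiad\`o Piat--Dal Maso--Percivale extension and your Rellich-plus-weak-$*$-limit identification of $\tilde{u}=u$ do look workable, but that is the case the paper simply cites from Zhikov; the new content is precisely the layered case your approach cannot reach.

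The paper's own proof avoids any $\epsilon$-scale extension and works variationally with the $\Gamma$-limits of the energies (\ref{energy1}) and (\ref{energy2}). It shows absence of the Lavrentiev gap $\hat{\tilde{f}}_1=\hat{\tilde{f}}_2$ by a $p_2$-harmonic extension across the single interface $\Gamma$ inside the period cell $Y$ (Lemma~\ref{extensionlayers}), where the geometry is fixed and no $\epsilon$-uniformity is involved, and then proves $p_2$-coercivity (\ref{standard}) of the homogenized Lagrangian $\hat{\tilde{f}}$ by bounding the convex conjugate $\hat{\tilde{g}}$ from above using a solenoidal test field $\tau$ equal to $-\xi$ on $R_1$ (Lemma~\ref{lemmatau}), which annihilates the dual integrand on the soft phase. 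The minimizer $u$ of the coercive limit energy over $W_0^{1,p_2}(\Omega)$ is then automatically in $W_0^{1,p_2}(\Omega)$. The decisive structural difference is that the only extension ever performed is across $\Gamma$ within one copy of $Y$, not across the $O(\epsilon)$-separated stiff slabs in $\Omega$, which is precisely what defeats your construction.
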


The proof of this theorem is given in Section~\ref{regularity of u}.

\subsubsection{Statement of the Corrector Theorem}
\label{CorrectorSection}

We now describe the family of correctors that provide a strong approximation of the sequence $\left\{\chi_{i}^{\epsilon}\nabla u_{\epsilon}\right\}_{\epsilon>0}$  in the $L^{p_{i}}(\Omega,\mathbb{R}^{n})$ norm.  We denote the rescaled period cell 
with side length $\epsilon>0$  by $Y_{\epsilon}$ and write $Y_{\epsilon}^{i}=\epsilon i+Y_{\epsilon}$, where $i\in\mathbb{Z}^{n}$.  
In what follows it is convenient to define the index set $I_{\epsilon}=\left\{i\in\mathbb{Z}^{n}:Y_{\epsilon}^{i}\subset \Omega \right\}$.
For $\varphi\in L^{p_{2}}(\Omega;\mathbb{R}^{n})$, we define the local average operator $M_{\epsilon}$ associated with the 
partition $Y^i_\epsilon$, ${i\in I_{\epsilon}}$ by
\begin{equation}
	\label{approximation}
	\displaystyle M_{\epsilon}(\varphi)(x) = 
	\begin{cases}
		\displaystyle
\sum_{i\in\hspace{1mm}I_{\epsilon}}{\chi_{Y_{\epsilon}^{i}}(x)\frac{1}{\left|Y_{\epsilon}^{i}\right|}\int_{Y_{\epsilon}^{i}}\varphi(y)dy}; & \text{ if $\displaystyle x\in\bigcup_{i\in I_{\epsilon}}Y_{\epsilon}^{i}$,}\\
		0; & \text{if $\displaystyle x\in\Omega\setminus\bigcup_{i\in I_{\epsilon}}Y_{\epsilon}^{i}$.}
	\end{cases}
\end{equation}

The family $M_{\epsilon}$ has the following properties
\begin{enumerate}
	\item For $i=1,2$, $\left\|M_{\epsilon}(\varphi)-\varphi\right\|_{L^{p_{i}}(\Omega;\mathbb{R}^{n})}\rightarrow0$ 
	as $\epsilon\rightarrow0$ (see \cite{Zaanen1958}).
	\item $M_{\epsilon}(\varphi)\rightarrow\varphi$ a.e. on $\Omega$ (see \cite{Zaanen1958}).
	\item From Jensen's inequality we have   $\left\|M_{\epsilon}(\varphi)\right\|_{L^{p_{i}}(\Omega;\mathbb{R}^{n})}\leq\left\|\varphi\right\|_{L^{p_{i}}(\Omega;\mathbb{R}^{n})}$, 
	for every $\varphi\in L^{p_{2}}(\Omega;\mathbb{R}^{n})$ and $i=1,2$.
\end{enumerate}

The strong approximation to the sequence $\left\{\chi_{i}^{\epsilon}\nabla u_{\epsilon}\right\}_{\epsilon>0}$ 
is given by the following corrector theorem. 

\begin{theorem}[Corrector Theorem]
\label{corrector}
Let $f\in W^{-1,q_{2}}(\Omega)$, let $u_{\epsilon}$ be the solutions to the problem (\ref{Dirichlet}), 
and let $u$ be the solution to problem (\ref{homogenized}).  Then, for periodic dispersions of inclusions  and 
for layered materials, we have 
\begin{equation}
 	\label{strongcvcorrector}	
 	\displaystyle
 	\int_{\Omega}\left|\chi_{i}^{\epsilon}(x)p_{\epsilon}\left(x,M_{\epsilon}(\nabla u)(x)\right)-\chi_{i}^{\epsilon}(x)\nabla u_{\epsilon}(x)\right|^{p_{i}}dx\rightarrow0, 
\end{equation}
as $\epsilon\rightarrow0$, for $i=1,2$.
\end{theorem}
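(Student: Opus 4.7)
The plan is to apply the monotonicity estimate \eqref{MonA} pointwise at $\xi_1 = \nabla u_\epsilon(x)$ and $\xi_2 = p_\epsilon^M(x) := p_\epsilon\bigl(x, M_\epsilon(\nabla u)(x)\bigr)$, which bounds the sum over $i=1,2$ of the left-hand sides of \eqref{strongcvcorrector} above, up to the constant $1/C_2$, by
\[
D_\epsilon := \int_\Omega \bigl(A_\epsilon(x,\nabla u_\epsilon) - A_\epsilon(x,p_\epsilon^M),\, \nabla u_\epsilon - p_\epsilon^M\bigr)\, dx.
\]
The theorem then reduces to showing $D_\epsilon \to 0$. Expanding $D_\epsilon = T_1 - T_2 - T_3 + T_4$ in the obvious bilinear way, I would show that each of the four terms tends to the same limit $\langle f, u\rangle = \int_\Omega (b(\nabla u),\nabla u)\,dx$, so that $D_\epsilon \to 0$ by cancellation.

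The two diagonal terms are the easy ones. For $T_1 = \int (A_\epsilon(\nabla u_\epsilon),\nabla u_\epsilon)\,dx$, test \eqref{Dirichlet} with $u_\epsilon$ itself, giving $T_1 = \langle f, u_\epsilon\rangle \to \langle f, u\rangle$ by Theorem~\ref{homogenization}. For $T_4 = \int (A_\epsilon(p_\epsilon^M),p_\epsilon^M)\,dx$, the constancy of $\xi_i := M_\epsilon(\nabla u)|_{Y_\epsilon^i}$ on each cell combined with a change of variables and the identity \eqref{inner product a with p} gives $T_4 = \sum_{i\in I_\epsilon} |Y_\epsilon^i|(b(\xi_i),\xi_i)$; the strong convergence $M_\epsilon(\nabla u) \to \nabla u$ in $L^{p_2}(\Omega;\mathbb{R}^n)$, available because of the improved integrability $\nabla u \in L^{p_2}$ provided by Theorem~\ref{regularity}, together with the continuity estimate \eqref{Conb} for $b$, then gives $T_4 \to \int (b(\nabla u),\nabla u)\,dx$.

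The genuine work lies in the two cross terms, each the pairing of two only-weakly convergent sequences. For $T_2 = \int (A_\epsilon(\nabla u_\epsilon), p_\epsilon^M)\,dx$ I would split $p_\epsilon^M = M_\epsilon(\nabla u) + (p_\epsilon^M - M_\epsilon(\nabla u))$; the first summand paired with the weak $L^{q_2}$ limit $b(\nabla u)$ of $A_\epsilon(\nabla u_\epsilon)$ gives $\int (b(\nabla u),\nabla u)\,dx$, while on each cell the second summand equals $\nabla_x\bigl(\epsilon\, \upsilon_{\xi_i}(\cdot/\epsilon)\bigr)$, so a cell-by-cell integration by parts against $A_\epsilon(\nabla u_\epsilon)$ (whose distributional divergence in $\Omega$ is $-f$), combined with a smooth cutoff to handle the cell-boundary fluxes, shows it vanishes in the limit. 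For $T_3 = \int (A_\epsilon(p_\epsilon^M), \nabla u_\epsilon)\,dx$, I would use the dual structure: $A_\epsilon(p_\epsilon^M)$ is divergence-free on each $Y_\epsilon^i$ by \eqref{div with p} and converges weakly in $L^{q_2}$ to $b(\nabla u)$ by \eqref{p5}, whereas $\nabla u_\epsilon$ is a gradient; a div-curl/integration-by-parts on each cell against $u_\epsilon - w_\epsilon^i$, where $w_\epsilon^i(x) = \xi_i\cdot x + \epsilon\, \upsilon_{\xi_i}(x/\epsilon)$ is a local potential of $p_\epsilon^M$, then yields $T_3 \to \int (b(\nabla u),\nabla u)\,dx$.

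The main obstacle will be the boundary fluxes on the interior cell interfaces $\partial Y_\epsilon^i$ created by the piecewise-constant structure of $M_\epsilon(\nabla u)$: the fields $p_\epsilon^M$ and $A_\epsilon(p_\epsilon^M)$ both jump across those faces, and naive telescoping does not cancel them. Controlling these terms is exactly where the improved integrability $\nabla u \in L^{p_2}$ from Theorem~\ref{regularity} becomes indispensable, since it matches the $p_2-2$ superlinear growth in the continuity estimate \eqref{ConA}; it must be used together with the uniform $L^{p_i}(Y)$ bounds on $\nabla \upsilon_\xi$ from the cell problem \eqref{cell}, a smooth interior cutoff $\phi_\delta \in C_c^\infty(\Omega)$, and the integral inequalities for the correctors announced in Section~5, which together let all boundary contributions be absorbed into integrals over sets of arbitrarily small measure.
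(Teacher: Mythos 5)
Your overall architecture is identical to the paper's: invoke the monotonicity bound \eqref{MonA} to reduce to showing
\[
\int_\Omega\bigl(A_\epsilon(x,p_\epsilon(x,M_\epsilon\nabla u))-A_\epsilon(x,\nabla u_\epsilon),\,p_\epsilon(x,M_\epsilon\nabla u)-\nabla u_\epsilon\bigr)\,dx\to 0,
\]
expand into four terms, and send each to $\int_\Omega(b(\nabla u),\nabla u)\,dx$. Your treatments of the two diagonal terms ($T_1=\langle f,u_\epsilon\rangle\to\langle f,u\rangle$, and $T_4$ via the cell identity \eqref{inner product a with p} together with $M_\epsilon\nabla u\to\nabla u$ strongly in $L^{p_2}$ from Theorem~\ref{regularity} and the continuity estimate \eqref{Conb}) coincide with the paper's Steps~4 and 1.

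The gap is in the cross terms, and it is substantive. Your plan is to split $p_\epsilon^M=M_\epsilon(\nabla u)+\nabla_x\bigl(\epsilon\,\upsilon_{\xi_i}(\cdot/\epsilon)\bigr)$ on each cell $Y_\epsilon^i$ and then integrate by parts cell-by-cell against $A_\epsilon(\nabla u_\epsilon)$. This runs into two problems you do not resolve. First, the boundary fluxes across $\partial Y_\epsilon^i$ genuinely do not cancel: the corrector argument $\xi_i=M_\epsilon(\nabla u)|_{Y_\epsilon^i}$ changes from cell to cell, so $\upsilon_{\xi_i}$ and $A(\cdot,p(\cdot,\xi_i))$ each jump across every interior cell face, producing $O(\epsilon^{-n})$ cells' worth of non-telescoping surface terms; a ``smooth interior cutoff'' does not help because the problematic faces are everywhere in the interior, not near $\partial\Omega$. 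Second, the integration by parts requires a well-defined normal trace of $A_\epsilon(\nabla u_\epsilon)$ on $\partial Y_\epsilon^i$; since $A_\epsilon(\nabla u_\epsilon)\in L^{q_2}(\Omega;\mathbb{R}^n)$ has distributional divergence only in $W^{-1,q_2}(\Omega)$ when $f\in W^{-1,q_2}(\Omega)$, that trace need not exist, so the cell-by-cell Green's formula is not even available to start from.

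The paper sidesteps both difficulties by inserting an additional approximation layer: replace $\nabla u$ by a simple function $\Psi=\sum_{j=0}^m\eta_j\chi_{\Omega_j}$ that is $\delta$-close in $L^{p_2}$, with the $\Omega_j$ fixed (not $\epsilon$-dependent) subsets of $\Omega$. Within each $\Omega_j$ the corrector argument is frozen at $\eta_j$, so $p_\epsilon(\cdot,\eta_j)$ is a single globally $\epsilon$-periodic field with no cell-boundary jumps, $A_\epsilon(\cdot,p_\epsilon(\cdot,\eta_j))$ is divergence-free in $D'(\mathbb{R}^n)$ by \eqref{div with p} and converges weakly to $b(\eta_j)$ by \eqref{p5}, and a div-curl argument with test functions $\delta u_\epsilon$, $\delta\in C_0^\infty(\Omega_j)$, combined with the Dunford--Pettis equi-integrability result (Lemma~\ref{dunfordpettis}) to identify the weak $L^1$ limit of the product, gives the passage to the limit cleanly on each $\Omega_j$. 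The error incurred by the substitution $M_\epsilon\nabla u\rightsquigarrow\Psi$ is then controlled by Lemma~\ref{lemma3} and the a priori bounds \eqref{aprioribound}, \eqref{Lemma 1 epsilon}, and is $o(1)$ as $\delta\to 0$ uniformly in $\epsilon$. This intermediate simple-function step is the key device your proposal is missing; without it the cell-boundary accounting in Steps~2 and 3 does not close.
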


The proof of Theorem~\ref{corrector} is given in Section~\ref{proof corrector}. 

\subsubsection{Lower Bounds on the Local Amplification of the Macroscopic Field}
\label{SectionFluctuations}

We display lower bounds on the $L^q$ norm of the gradient fields inside each material that are given in terms 
of the correctors presented in Theorem \ref{corrector}.  We begin by presenting a general lower bound that holds 
for the composition of the sequence $\{\chi_i^\epsilon\nabla u_\epsilon\}_{\epsilon>0}$ with any non-negative 
Carath\'{e}odory function. Recall that $\psi:\Omega\times\mathbb{R}^{n}\rightarrow\mathbb{R}$ is a Carath\'{e}odory 
function if $\psi(x,\cdot)$ is continuous for almost every $x\in\Omega$ and if $\psi(\cdot,\lambda)$ is measurable 
in $x$ for every $\lambda\in\mathbb{R}^n$.  The lower bound on the sequence obtained by the composition of 
$\psi(x,\cdot)$ with $\chi_i^\epsilon(x)\nabla u_\epsilon(x)$ is given by

\begin{theorem}
\label{fluctuations}
For all Carath\'{e}odory functions $\psi\geq0$ and measurable sets $D\subset\Omega$ we have 
$$\int_{D}\int_{Y}\psi\left(x,\chi_{i}(y)p\left(y,\nabla u(x)\right)\right)dydx\leq\liminf_{\epsilon\rightarrow0}\int_{D}\psi\left(x,\chi_{i}^{\epsilon}(x)\nabla u_{\epsilon}(x)\right)dx.$$
	
If the sequence $\left\{\psi\left(x,\chi_{i}^{\epsilon}(x)\nabla u_{\epsilon}(x)\right)\right\}_{\epsilon>0}$ 
is weakly convergent in $L^{1}(\Omega)$, then the inequality becomes an equality.
	
In particular, for $\psi(x,\lambda)=\left|\lambda\right|^{q}$ with $q\geq 2$, we have
\begin{eqnarray} \int_{D}\int_{Y}\chi_{i}(y)\left|p\left(y,\nabla u(x)\right)\right|^{q}dydx\leq\liminf_{\epsilon\rightarrow0}\int_{D}\chi_{i}^{\epsilon}(x)\left|\nabla u_{\epsilon}(x)\right|^{q}dx.
\label{lq}
\end{eqnarray}
\end{theorem}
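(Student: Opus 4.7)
Write $a_\epsilon := \chi_i^\epsilon\nabla u_\epsilon$ and $T_\epsilon := \chi_i^\epsilon p_\epsilon(\cdot, M_\epsilon(\nabla u))$. Theorem~\ref{corrector} provides $\|a_\epsilon - T_\epsilon\|_{L^{p_i}(\Omega;\mathbb{R}^n)}\to 0$, so $a_\epsilon - T_\epsilon\to 0$ in measure; moreover both are bounded in $L^{p_i}(\Omega;\mathbb{R}^n)$ by \eqref{aprioribound}. Since $\chi_D\psi$ is itself a nonnegative Carath\'eodory function, I may reduce to the case $D=\Omega$ at the outset.

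My first move is to truncate: set $\psi_N := \min(\psi,N)$. Then $\psi_N$ is a bounded nonnegative Carath\'eodory function with $\psi_N\uparrow\psi$, and $\psi_N\le\psi$ yields $\liminf_\epsilon\int\psi_N(\cdot,a_\epsilon)\le\liminf_\epsilon\int\psi(\cdot,a_\epsilon)$. Monotone convergence gives $\int_\Omega\int_Y\psi_N \uparrow\int_\Omega\int_Y\psi$, so proving the liminf inequality for every bounded $\psi_N$ gives it for $\psi$. For bounded $\psi$ I actually aim for the full equality
\begin{equation}\label{planeq}
\int_\Omega\psi(x,a_\epsilon(x))\,dx\longrightarrow\int_\Omega\int_Y\psi\bigl(x,\chi_i(y)p(y,\nabla u(x))\bigr)\,dy\,dx,
\end{equation}
which then serves both parts of the theorem.

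The core of the argument is an unfolding identity for $T_\epsilon$. On each $Y_\epsilon^j\subset\Omega$, $M_\epsilon(\nabla u)\equiv\xi_\epsilon^j$, and the substitution $x=\epsilon(j+z)$, $z\in Y$, together with $Y$-periodicity of $\chi_i$ and $p(\cdot,\xi)$, yields
$$\int_{Y_\epsilon^j}\psi(x,T_\epsilon(x))\,dx = \epsilon^n\int_Y \psi\bigl(\epsilon(j+z),\chi_i(z)p(z,\xi_\epsilon^j)\bigr)\,dz.$$
Summing over $j\in I_\epsilon$ and applying Fubini expresses $\int_\Omega\psi(\cdot,T_\epsilon)\,dx$, up to an $O(\|\psi\|_\infty\,\epsilon)$ boundary contribution, as $\int_Y\int_\Omega f_\epsilon(x,z)\,dx\,dz$ with $f_\epsilon(x,z):=\psi\bigl(\epsilon(j(x)+z),\chi_i(z)p(z,M_\epsilon(\nabla u)(x))\bigr)$. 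Since $\epsilon(j(x)+z)-x=O(\epsilon)$ uniformly and $M_\epsilon(\nabla u)\to\nabla u$ a.e.\ by Lebesgue differentiation, a Scorza-Dragoni reduction to a compact $K_\delta\subset\Omega$ of arbitrarily full measure on which $\psi$ is jointly continuous gives $f_\epsilon(x,z)\to\psi(x,\chi_i(z)p(z,\nabla u(x)))$ a.e.; boundedness of $\psi$ and dominated convergence pass to the double integral on the right of \eqref{planeq}. A parallel Scorza-Dragoni argument, combined with $a_\epsilon-T_\epsilon\to 0$ in measure, gives $\psi(\cdot,a_\epsilon)-\psi(\cdot,T_\epsilon)\to 0$ in measure, hence in $L^1$ by boundedness, proving \eqref{planeq}.

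For the equality claim, weak $L^1$ convergence of $\{\psi(\cdot,a_\epsilon)\}$ forces uniform integrability (Dunford-Pettis), so $\sup_\epsilon\int_\Omega(\psi-\psi_N)(\cdot,a_\epsilon)\,dx\to 0$ as $N\to\infty$; together with \eqref{planeq} applied to each bounded $\psi_N$ and the monotone convergence $\int\int\psi_N\uparrow\int\int\psi$, this upgrades the truncation-based liminf inequality to the full equality. The main obstacle I expect is the unfolding identity: one must coordinate the a.e.\ convergence of cell averages $\xi_\epsilon^{j(x)}$ to the merely $L^{p_2}$ function $\nabla u$, the $\epsilon$-periodic oscillations carried by $p(\cdot/\epsilon,\cdot)$, and the merely measurable $x$-dependence of $\psi$ (which is what forces the Scorza-Dragoni step) within a single limit passage and a single dominated convergence argument.
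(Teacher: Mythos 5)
Your proposal takes a genuinely different route from the paper. The paper works through Young measures: it first observes, via Theorem~\ref{corrector}, that $\{\chi_i^\epsilon\nabla u_\epsilon\}$ and $\{\chi_i^\epsilon p_\epsilon(\cdot,M_\epsilon\nabla u)\}$ generate the same Young measure $\nu^i$, then identifies $\nu^i_x$ by testing against \emph{Lipschitz} $\phi\in C_0(\mathbb{R}^n)$ and $\zeta\in C_0^\infty$ (Lemma~\ref{lemmafluctuations}), and finally invokes the general Carath\'eodory lower-semicontinuity/representation theorem for Young measures (Theorem~6.11 of Pedregal) to obtain the stated inequality and the equality under weak $L^1$ convergence. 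You instead truncate $\psi$, unfold directly, and try to run a bare-hands dominated-convergence argument with a Scorza--Dragoni reduction, effectively re-deriving inside the problem what the Young measure machinery supplies for free. The equality claim for bounded $\psi$ (your \eqref{planeq}) is indeed correct and is exactly what the Young measure identification encodes; your Dunford--Pettis upgrade for unbounded $\psi$ is the standard argument.

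There is, however, a genuine gap in the unfolding step. You assert that ``$M_\epsilon(\nabla u)\to\nabla u$ a.e. \ldots gives $f_\epsilon(x,z)\to\psi(x,\chi_i(z)p(z,\nabla u(x)))$ a.e.'' This requires, for a.e.\ $(x,z)\in\Omega\times Y$, the pointwise convergence $p(z,M_\epsilon(\nabla u)(x))\to p(z,\nabla u(x))$, i.e.\ pointwise-in-$z$ continuity of $\xi\mapsto p(z,\xi)$. That continuity is not available: what the paper establishes (Lemma~\ref{lemma2}) is continuity of $\xi\mapsto p(\cdot,\xi)$ only in $L^{p_i}(Y)$, which gives $z$-a.e.\ convergence along an $x$-dependent subsequence, not the jointly a.e.\ statement your dominated-convergence step needs. (Pointwise regularity of $\nabla v_\xi(z)$ in $\xi$ for fixed $z$ is a nontrivial question about the cell problem that the paper nowhere addresses.) This is precisely why the paper's identification lemma tests against \emph{Lipschitz} $\phi$: the Lipschitz bound converts the $y$-pointwise discrepancy $|\phi(\chi_i(y)p(y,\xi_\epsilon^j))-\phi(\chi_i(y)p(y,\nabla u(x)))|$ into $|p(y,\xi_\epsilon^j)-p(y,\nabla u(x))|$, which can be integrated in $y$ and controlled by Lemma~\ref{lemma2} plus H\"older, sidestepping any pointwise-in-$y$ continuity of $p$.

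The gap is fillable, but not by a.e.\ convergence as stated: you need to replace it with convergence \emph{in measure} of $\chi_i(z)p(z,M_\epsilon(\nabla u)(x))$ to $\chi_i(z)p(z,\nabla u(x))$ on $\Omega\times Y$, which does follow from integrating Lemma~\ref{lemma2} in $x$ and using Property~1 of $M_\epsilon$ and Theorem~\ref{regularity}. You would then combine this with tightness of the sequence in $L^{p_i}(\Omega\times Y)$ (Lemma~\ref{lemma1} plus Jensen) and the Scorza--Dragoni uniform continuity of $\psi_N$ on $K_\delta\times\overline{B_R}$. Your $a_\epsilon\leftrightarrow T_\epsilon$ swap has the same structure and needs the same explicit tightness input, which you mention in passing but never deploy. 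As written, the plan elides the one step where the proof actually has to do work, and the paper's choice of Lipschitz test functions plus Young measure abstraction is precisely designed to avoid that elision.
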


Theorem \ref{fluctuations} together with \eqref{lq} provide explicit lower bounds on the gradient field inside
each material.  It relates the local excursions of the gradient inside each phase $\chi_i^\epsilon\nabla u_\epsilon$
to the average gradient $\nabla u$ through the multiscale quantity given by the corrector $p(y,\nabla u(x))$.  It is 
clear from \eqref{lq} that the $L^q(Y\times\Omega)$ integrability of $p(y,\nabla u(x))$ provides a lower bound on the 
$L^q(\Omega)$ integrability of $\nabla u_\epsilon$.

The proof of Theorem~\ref{fluctuations} is given in Section~\ref{proof fluctuations}. 
	
\section{Properties of the Homogenized Operator $b$}
\label{propertiesofb}

In this section, we prove properties (\ref{Conb}) and (\ref{Monb}) of the homogenized operator $b$.  In the rest 
of the paper, the letter $C$ will represent a generic positive constant independent of $\epsilon$, and it can take 
different values.  

\subsection{Proof of (\ref{Monb})}

Using (\ref{cell}) and (\ref{MonA}), we have
\begin{align*}
	\displaystyle 
	&\left(b(\xi_{2}) - b(\xi_{1}),\xi_{2}-\xi_{1}\right)
	 = \int_{Y}\left(A(y,p(y,\xi_{2}))-A(y,p(y,\xi_{1})),p(y,\xi_{2})-p(y,\xi_{1})\right)dy\\	
	&\quad \geq C\left(\int_{Y}\chi_{1}(y)\left|p(y,\xi_{1})-p(y,\xi_{2})\right|^{p_{1}}dy+\int_{Y}\chi_{2}(y)\left|p(y,\xi_{1})-p(y,\xi_{2})\right|^{p_{2}}dy\right)\\
	&\quad\geq0. 
\end{align*}	

\subsection{Proof of (\ref{Conb})}

By (\ref{ConA}), H\"older's inequality, and (\ref{MonA}) we have
\begin{align}
\label{conb1}
	\displaystyle 
	\left|b(\xi_{1}) - b(\xi_{2})\right|&\leq \int_{Y}\left|A(y,p(y,\xi_{1}))-A(y,p(y,\xi_{2}))\right|dy\\
	&\leq C\left(\int_{Y}\chi_{1}(y)\left|p(y,\xi_{1})-p(y,\xi_{2})\right|^{p_{1}}dy\right)^{\frac{1}{p_{1}}}\notag\\
	&\qquad\times\left(\int_{Y}\chi_{1}(y)(1+\left|p(y,\xi_{1})\right|+\left|p(y,\xi_{2})\right|)^{q_{2}(p_{1}-2)}dy\right)^{\frac{1}{q_{2}}}\notag \\
	&\quad+C\left(\int_{Y}\chi_{2}(y)\left|p(y,\xi_{1})-p(y,\xi_{2})\right|^{p_{2}}dy\right)^{\frac{1}{p_{2}}}
\notag\\
	&\qquad\times\left(\int_{Y}\chi_{2}(y)(1+\left|p(y,\xi_{1})\right|+\left|p(y,\xi_{2})\right|)^{q_{1}(p_{2}-2)}dy\right)^{\frac{1}{q_{1}}}\notag \\
	&\leq C\left[\int_{Y}\left(A(y,p(y,\xi_{1}))-A(y,p(y,\xi_{2})),p(y,\xi_{1})-p(y,\xi_{2})\right)dy\right]^{\frac{1}{p_{1}}}\notag\\
	&\qquad\times\left[\int_{Y}\chi_{1}(y)(1+\left|p(y,\xi_{1})\right|+\left|p(y,\xi_{2})\right|)^{q_{2}(p_{1}-2)}dy\right]^{\frac{1}{q_{2}}}\notag\\
	&\quad+C\left[\int_{Y}\left(A(y,p(y,\xi_{1}))-A(y,p(y,\xi_{2})),p(y,\xi_{1})-p(y,\xi_{2})\right)dy\right]^{\frac{1}{p_{2}}}\notag\\
	&\qquad\times\left[\int_{Y}\chi_{2}(y)(1+\left|p(y,\xi_{1})\right|+\left|p(y,\xi_{2})\right|)^{q_{1}(p_{2}-2)}dy\right]^{\frac{1}{q_{1}}}\notag
	&\intertext{Using (\ref{conb1}), (\ref{cell}), (\ref{b}), the Cauchy-Schwarz inequality, Lemma~\ref{lemma1}, and Young's inequality we obtain}\notag
	&\leq C\left[\left(\frac{\delta^{p_{1}}}{p_{1}}+\frac{\delta^{p_{2}}}{p_{2}}\right)\left|b(\xi_{1})-b(\xi_{2})\right|
\right.\notag\\
&\quad+\frac{\delta^{-q_{2}}\left|\xi_{1}-\xi_{2}\right|^{\frac{1}{p_{1}-1}}\left(1+\left|\xi_{1}\right|^{p_{1}}+\left|\xi_{2}\right|^{p_{1}}+\left|\xi_{1}\right|^{p_{2}}+\left|\xi_{2}\right|^{p_{2}}\right)^{\frac{p_{1}-2}{p_{1}-1}}}{q_{2}}\notag\\
&\quad\left.+\frac{\delta^{-q_{1}}\left|\xi_{1}-\xi_{2}\right|^{\frac{1}{p_{2}-1}}\left(1+\left|\xi_{1}\right|^{p_{1}}+\left|\xi_{2}\right|^{p_{1}}+\left|\xi_{1}\right|^{p_{2}}+\left|\xi_{2}\right|^{p_{2}}\right)^{\frac{p_{2}-2}{p_{2}-1}}}{q_{1}}\right]\notag
\end{align}

Rearranging the terms in (\ref{conb1}), and taking $\delta$ small enough we obtain (\ref{Conb})

\section{Higher Order Integrability of the Homogenized Solution}
\label{regularity of u}	

In this section we display higher integrability results for the field gradients inside dispersed microstructures 
and layered materials.  For dispersions of inclusions, the included material is taken to have a lower power-law 
exponent than that of the host phase.  For both of these cases it is shown that the homogenized solution lies in 
$W_{0}^{1,p_{2}}(\Omega)$.  In the following sections we will apply these facts to establish strong approximations 
for the sequences $\{\chi_i^{\epsilon}\nabla u_{\epsilon}\}_{\epsilon>0}$ in $L^{p_2}(\Omega,{\mathbb{R}}^n)$.  
The approach taken here is variational and uses the \textit{homogenized Lagrangian} associated with $b(\xi)$ 
defined in (\ref{b}).  The integrability of the homogenized solution $u$ of (\ref{homogenized}) is determined by 
the growth of the homogenized Lagrangian with respect to its argument.  

To proceed we introduce the local Lagrangian associated with power-law composites.  The Lagrangian corresponding 
to the problem studied here is given by
\begin{equation}
	\label{Lagrangian}
\tilde{f}(x,\xi)=q(x)\left|\xi\right|^{p(x)} \text{, with $q(x)=\frac{\sigma_{1}}{p_{1}}\chi_{1}(x)+\frac{\sigma_{2}}{p_{2}}\chi_{2}(x),$}
\end{equation}
where $\xi\in\mathbb{R}^{n}$ and $x\in\Omega\subset\mathbb{R}^{n}$.  Here $\nabla_{\xi}\tilde{f}(x,\xi)=A\left(x,\xi\right)$, 
where $A(x,\xi)$ is given by (\ref{A}).

We consider the rescaled Lagrangian
\begin{equation}
	\label{epsilonLagrangian}
\tilde{f_{\epsilon}}(x,\xi)=\tilde{f}\left(\frac{x}{\epsilon},\xi\right)=\frac{\sigma_{1}}{p_{1}}\chi_{1}^{\epsilon}(x)\left|\xi\right|^{p_{1}}+\frac{\sigma_{2}}{p_{2}}\chi_{2}^{\epsilon}(x)\left|\xi\right|^{p_{2}},
\end{equation}
where $\chi_{i}^{\epsilon}(x)=\chi_{i}\left(x/\epsilon\right)$, $i=1,2$, $\xi\in\mathbb{R}^{n}$, 
and $x\in\Omega\subset\mathbb{R}^{n}$.  

The Dirichlet problem given by (\ref{Dirichlet}) is associated with the variational problem given by 
\begin{equation}
	\displaystyle
	\label{energy1}
	\displaystyle E_{1}^{\epsilon}(f)=\inf_{u\in W_{0}^{1,p_{1}}(\Omega)}\left\{\int_{\Omega}\tilde{f}_{\epsilon}(x,\nabla u)dx-\left\langle f,u\right\rangle\right\},
\end{equation}	
with $f\in W^{-1,q_{2}}(\Omega)$.  Here (\ref{Dirichlet}) is the Euler equation for (\ref{energy1}).  
However, we also consider
\begin{equation}
	\displaystyle
	\label{energy2}
	\displaystyle E_{2}^{\epsilon}(f)=\inf_{u\in W_{0}^{1,p_{2}}(\Omega)}\left\{\int_{\Omega}\tilde{f}_{\epsilon}(x,\nabla u)dx-\left\langle f,u\right\rangle\right\},
\end{equation}
with $f\in W^{-1,q_{2}}(\Omega)$ (See \cite{Zhikov1992}).  Here $\left\langle\cdot,\cdot\right\rangle$ is the duality 
pairing between $W_{0}^{1,p_{1}}(\Omega)$ and $W^{-1, q_{2}}(\Omega)$.

From \cite{Zhikov1994}, we have $\displaystyle \lim_{\epsilon\rightarrow0}E^{\epsilon}_{i}=E_{i}$, for $i=1,2$, where
\begin{equation}
	\label{Ei}
	\displaystyle E_{i}=\inf_{u\in W_{0}^{1,p_{i}}(\Omega)}\left\{\int_{\Omega}\hat{\tilde{f}}_{i}(\nabla u(x))dx-\left\langle f,u\right\rangle\right\}.
\end{equation}	

In (\ref{Ei}), $\hat{\tilde{f}}_{i}(\xi)$ is given by 
\begin{equation}
	\displaystyle
	\label{fi}
	\hat{\tilde{f}}_{i}(\xi)=\inf_{\text{$v$ in $W_{per}^{1,p_{i}}(Y)$}}\int_{Y}\tilde{f}(y,\xi+\nabla v(y))dy
\end{equation}
and satisfies
\begin{equation}
	\label{nonstandard}
	-c_{0}+c_{1}\left|\xi\right|^{p_{1}}\leq \hat{\tilde{f}}_{i}(\xi) \leq c_{2}\left|\xi\right|^{p_{2}}+c_{0}.
\end{equation}

In general, (see \cite{Zhikov1995}) Lavrentiev phenomenon can occur and $E_{1}<E_{2}$.  However, for periodic 
dispersed and layered microstructures, no Lavrentiev phenomenon occurs and we have the following 
Homogenization Theorem.

\begin{theorem}
\label{REG}
For periodic dispersed and layered 
microstructures, the homogenized Dirichlet problems satisfy $E_{1}=E_{2}$, where 
$\hat{\tilde{f}}=\hat{\tilde{f}}_{1}=\hat{\tilde{f}}_{2}$ and $c_{2}+c_{1}\left|\xi\right|^{p_{2}}\leq\hat{\tilde{f}}(\xi)$.  
Moreover, $\nabla_{\xi}\hat{\tilde{f}}(\xi)=b(\xi)$, where $b$ is the homogenized operator (\ref{b}). 
\end{theorem}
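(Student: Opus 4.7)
The plan centers on proving $\hat{\tilde{f}}_1 = \hat{\tilde{f}}_2$ (absence of the Lavrentiev phenomenon); once this is in hand, the identification $\nabla_\xi \hat{\tilde{f}} = b$ follows by the envelope theorem applied to the cell problem, the stated growth bound follows from the connectedness of the $p_2$-phase, and $E_1 = E_2$ follows by the direct method. The inequality $\hat{\tilde{f}}_1 \leq \hat{\tilde{f}}_2$ is immediate from $W^{1,p_2}_{per}(Y) \subset W^{1,p_1}_{per}(Y)$. For the reverse, I would show that the unique cell-problem minimizer $\upsilon_\xi$ of (\ref{cell}) (existing by strict monotonicity (\ref{MonA})) in fact lies in $W^{1,p_2}_{per}(Y)$, whence it serves as an admissible competitor in the $\hat{\tilde{f}}_2$ infimum.

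For layered geometries I would exploit translational invariance tangent to the layers: by uniqueness, $\upsilon_\xi$ depends only on the normal coordinate $y_n$, and the Euler--Lagrange equation forces the normal component of $A(y, \xi + \nabla \upsilon_\xi)$ to be constant across $Y$. Inverting this inside each of $R_1$, $R_2$ via the monotonicity of $t \mapsto \sigma_i |t|^{p_i - 2} t$ produces a piecewise constant $\nabla \upsilon_\xi$, so $\upsilon_\xi \in W^{1,\infty}_{per}(Y) \subset W^{1,p_2}_{per}(Y)$. For dispersed microstructures, the finite-energy condition for $\tilde{f}$ in (\ref{Lagrangian}) already gives $\upsilon_\xi|_{R_2} \in W^{1,p_2}(R_2)$, so its trace on the smooth interface $\partial F$ lies in $W^{1 - 1/p_2, p_2}(\partial F)$. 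I would then construct $w \in W^{1,p_2}(F)$ minimizing $\int_F \tilde{f}(y, \xi + \nabla w)\,dy$ subject to this Dirichlet trace; existence of $w$ with the claimed regularity follows from boundary regularity theory for the $p_1$-Laplacian on a smooth domain, given that the boundary data exceeds the natural trace regularity $W^{1 - 1/p_1, p_1}(\partial F)$. Gluing $w$ with $\upsilon_\xi|_{R_2}$ produces a $W^{1,p_2}_{per}(Y)$ function of energy no greater than $\hat{\tilde{f}}_1(\xi)$, and by uniqueness of the $W^{1,p_1}_{per}$ minimizer this glued function must coincide with $\upsilon_\xi$, placing $\upsilon_\xi$ in $W^{1,p_2}_{per}(Y)$.

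The remaining conclusions then fall out. Differentiating $\hat{\tilde{f}}(\xi) = \int_Y \tilde{f}(y, \xi + \nabla \upsilon_\xi)\,dy$ in $\xi$, the contribution coming from $\partial_\xi \nabla \upsilon_\xi$ vanishes by (\ref{cell}), leaving $\int_Y A(y, p(y,\xi))\,dy = b(\xi)$. The growth bound reduces to a Jensen-type estimate on the connected $p_2$-phase $R_2$, using the periodicity constraint $\int_Y \nabla \upsilon_\xi\,dy = 0$ to bound the mean of $\xi + \nabla \upsilon_\xi$ on $R_2$ in terms of $|\xi|$. Finally, $E_1 = E_2$ follows by the direct method, since this $p_2$-growth forces any minimizing sequence for $E_1$ to remain bounded in $W^{1,p_2}_0(\Omega)$. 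The principal obstacle I anticipate is the dispersed case, where one must justify the $W^{1,p_2}(F)$ regularity of the replacement function $w$ and verify that the cut-and-paste procedure preserves optimality; this ultimately rests on boundary regularity theory for the $p_1$-Laplacian on smooth domains.
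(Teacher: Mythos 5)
Your route to $\hat{\tilde{f}}_1 = \hat{\tilde{f}}_2$ is genuinely different from the paper's. The paper never attempts to show the cell minimizer $\upsilon_\xi$ itself is $W^{1,p_2}_{per}$; instead it proves a density statement: for any $v\in W^{1,p_1}_{per}(Y)$ of finite $\tilde f$-energy it constructs $v_\epsilon\in W^{1,p_2}_{per}(Y)$ with $\int_Y\tilde f(y,\xi+\nabla v_\epsilon)\to\int_Y\tilde f(y,\xi+\nabla v)$, via an extension Lemma (\ref{extensionlayers}) and mollification of $z=v-\tilde v$ by $C_0^\infty(R_1)$ functions. Your layered argument (tangential translation invariance forces $\upsilon_\xi$ to depend only on $y_n$, constancy of the normal flux, inversion giving a piecewise-constant $\upsilon_\xi'$) is correct, more explicit, and actually stronger than the paper's — it identifies the minimizer as Lipschitz. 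For the dispersed case, the paper simply cites Zhikov and gives no proof; your cut-and-paste scheme is a sensible alternative but rests on an unestablished claim, namely that the $p_1$-harmonic replacement $w$ in $F$ with Dirichlet trace in $W^{1-1/p_2,p_2}(\partial F)$ lies in $W^{1,p_2}(F)$. This is not a standard boundary-regularity fact for the $p_1$-Laplacian and would need its own argument; you flag it as the "principal obstacle," correctly.

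The decisive gap, however, is the growth bound. You propose Jensen on $R_2$, "using the periodicity constraint $\int_Y\nabla\upsilon_\xi\,dy=0$ to bound the mean of $\xi+\nabla\upsilon_\xi$ on $R_2$." This does not work: $\int_Y\nabla\upsilon_\xi=0$ only gives $\int_{R_2}\nabla\upsilon_\xi=-\int_{R_1}\nabla\upsilon_\xi$, with no a priori control on either term, so the $R_2$-average of $\xi+\nabla\upsilon_\xi$ is not bounded below by $c|\xi|$. In fact, in your own layered computation, for $\xi$ normal to the layers the flux-continuity relation $\sigma_1|s|^{p_1-1}\mathrm{sgn}(s)=\sigma_2|t|^{p_2-1}\mathrm{sgn}(t)$ forces $|t|\sim|\xi|^{(p_1-1)/(p_2-1)}$, so the $R_2$-average $\xi_n+\upsilon_\xi'|_{R_2}=t$ grows sublinearly in $|\xi|$ and the Jensen lower bound on $\int_{R_2}|\xi+\nabla\upsilon_\xi|^{p_2}$ degenerates. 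The paper takes an entirely different route here: it introduces the convex conjugate $\hat{\tilde g}$, constructs in Lemma~\ref{lemmatau} an explicit solenoidal competitor $\tau$ with $\tau=-\xi$ on $R_1$ satisfying $\int_Y|\tau|^{q_1}\le C|\xi|^{q_1}$, concludes $\hat{\tilde g}(\xi)\le c_1+c_2|\xi|^{q_1}$, and recovers the $|\xi|^{p_2}$-lower bound on $\hat{\tilde f}$ by Legendre duality. If you wish to keep a primal argument, you must replace Jensen by a comparison field or dual certificate that kills the $R_1$ contribution; the periodicity-of-the-mean constraint alone is insufficient.

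Two smaller points: the identification $\nabla_\xi\hat{\tilde f}=b$ via differentiating under the integral and killing the $\partial_\xi\nabla\upsilon_\xi$ term by the Euler--Lagrange equation (\ref{cell}) is standard and aligns with the paper; and your "$E_1=E_2$ by the direct method" is consistent with the paper's final paragraph, but note that it is a corollary of the growth bound, so the Jensen gap propagates to this step.
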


\begin{proof}
Theorem~\ref{REG} has been proved for dispersed periodic media in \cite{Zhikov1994}.  We prove Theorem~\ref{REG} 
for layers following the steps outlined in \cite{Zhikov1994}.

We first show that $\hat{\tilde{f}}=\hat{\tilde{f}}_{1}=\hat{\tilde{f}}_{2}$ holds for layered media.  Then we 
show that the homogenized Lagrangian $\hat{\tilde{f}}$ satisfies the estimate given by 
\begin{equation}
	\label{standard}
	\displaystyle	-c_{0}+c_{1}\left|\xi\right|^{p_{2}}\leq \hat{\tilde{f}}(\xi) \leq c_{2}\left|\xi\right|^{p_{2}}+c_{0}
\end{equation}
with $c_{0}\geq0$, and $c_{1}$,$c_{2}>0$.

We introduce the space of functions $W^{1,p_{2}}_*(R_{2})$ that belong to $W^{1,p_{2}}(R_{2})$ and are periodic 
on $\partial R_2\cap\partial Y$.

\begin{lemma}
\label{extensionlayers}
Any function in $v\in W^{1,p_{2}}_*(R_{2})$ can be extended to $R_{1}$ in such a 
way that the extension $\tilde{v}(y)$ belongs to $W_{per}^{1,p_{2}}(Y)$ and $\tilde{v}(y)=v(y)$ on $R_{2}$.
\end{lemma}

\begin{proof}
Let $\varphi$ to be the solution of 
\begin{equation*}
	\begin{cases}
		\Delta_{p_{2}}\varphi=0 & \text{, on $R_{1}$}\\
		\varphi \text{ takes periodic boundary values on opposite faces of $\partial Y\cap\partial R_{1}$}\\
		\varphi_{\big|_{1}}=v_{\big|_{2}} & \text{, on $\Gamma$}
	\end{cases}
\end{equation*}	
Here the subscript $1$ indicates the trace on the $R_{1}$ side of $\Gamma$ and $2$ indicates the trace 
on the $R_{2}$ side of $\Gamma$.  For a proof of existence of the solution $\varphi$ see \cite{Evans1982} or 
\cite{Lewis1977}.  

The extension $\tilde{v}$ is given by 
$$\displaystyle \tilde{v}=\begin{cases}	v & \text{, in $R_{2}.$}\\	\varphi & \text{, on $R_{1}.$}	\end{cases}$$
\end{proof}

To prove that $\hat{\tilde{f}}_{1}=\hat{\tilde{f}}_{2}$, it suffices to show that for every $v\in W_{per}^{1,p_{1}}(Y)$ 
satisfying  $\displaystyle \int_{Y}\tilde{f}(y,\xi+\nabla v(y))dy<\infty$   there exists a sequence $v_{\epsilon}\in W_{per}^{1,p_{2}}(Y)$ such that 
$$\lim_{\epsilon\rightarrow0}\int_{Y}\tilde{f}(y,\xi+\nabla v_{\epsilon}(y))dy= \int_{Y}\tilde{f}(y,\xi+\nabla v(y))dy.$$

For $v$ as above, let $\tilde{v}$ be as 
in Lemma~\ref{extensionlayers} and set $z=v-\tilde{v}$. It is clear that $z\in W^{1,p_{1}}(R_{1})$, is periodic on opposite faces of 
$\partial Y\cap\partial R_{1}$, zero on $\Gamma$ and we write
$$\int_{Y}\tilde{f}(y,\xi+\nabla v(y))dy=\int_{R_{2}}f_{2}(\xi+\nabla v(y))dy+\int_{R_{1}}f_{1}(\xi+\nabla \tilde{v}(y)+\nabla z(y))dy,$$
where $f_{1}(\xi)=\frac{\sigma_{1}}{p_{1}}\left|\xi\right|^{p_1}$ and $f_{2}(\xi)=\frac{\sigma_{2}}{p_{2}}\left|\xi\right|^{p_2}$. 

We can choose a sequence $\left\{z_{\epsilon}\right\}_{\epsilon>0}\in\textsl{C}_{0}^{\infty}(R_{1})$ such that $z_{\epsilon}$
vanishes in $R_{2}$ and $z_{\epsilon}\rightarrow z$ in $W^{1,p_{1}}(R_{1})$.
	
Define $v_{\epsilon}\in W_{per}^{1,p_{2}}(Y)$ by 
\begin{equation*}
	\displaystyle v_{\epsilon}=
	\begin{cases}
		v & \text{in $R_{2}$},\\
		\tilde{v}+z_{\epsilon} & \text{in $R_{1}$}.  	
	\end{cases}
\end{equation*}
	
Since $v_{\epsilon}\rightarrow v$ in $W_{per}^{1,p_{1}}(Y)$, we see that 
\begin{align*}
	\displaystyle 
	&\lim_{\epsilon\rightarrow 0}\int_{Y}\tilde{f}(y,\xi+\nabla v_{\epsilon}(y))dy\\
	&\quad=\lim_{\epsilon\rightarrow 0}\left(\int_{R_{2}}f_{2}(\xi+\nabla v(y))dy+\int_{R_{1}}f_{1}(\xi+\nabla \tilde{v}(y)+\nabla z_{\epsilon}(y))dy\right)\\
	&\quad=\int_{Y}\tilde{f}(y,\xi+\nabla v(y))dy.
\end{align*}	
Therefore $\hat{\tilde{f}}=\hat{\tilde{f}}_{1}=\hat{\tilde{f}}_{2}$ for layered media.
	
We establish (\ref{standard}) by introducing the convex conjugate of $\hat{\tilde{f}}$.  We denote the convex 
dual of $\hat{\tilde{f}}_{i}(\xi)$ by $\hat{\tilde{g}}_{i}(\xi)$; i.e., $\displaystyle\hat{\tilde{g}}_{i}(\xi)=\sup_{\lambda\in\mathbb{R}^{n}}\left\{\xi\cdot\lambda-\hat{\tilde{f}}_{i}(\lambda)\right\}$.  
It is easily verified (see \cite{Zhikov1992}) that
\begin{equation}
	\displaystyle
	\label{gi}
	\hat{\tilde{g}}_{i}(\xi)=\inf_{\text{$w$ in $Sol^{q_{i}}(Y)$}}\int_{Y}\tilde{g}(y,\xi+w(y))dy
\end{equation}
and 
\begin{equation}
	\label{dualnonstandard}
	-c_{0}+c_{1}^{*}\left|\xi\right|^{q_{1}}\leq \hat{\tilde{g}}_{i}(\xi) \leq c_{2}^{*}\left|\xi\right|^{q_{2}}+c_{0}.
\end{equation}
Here $Sol^{q_{i}}(Y)$ are the solenoidal vector fields belonging to $L^{q_{i}}(Y,\mathbb{R}^{n})$ and having mean 
value zero $$Sol^{q_{i}}(Y)=\left\{w\in L^{q_{i}}(Y;\mathbb{R}^{n}):\text{div}\,w=0, w\cdot n \text{ anti-periodic}\right\}.$$

We will show that $\hat{\tilde{g}}=\hat{\tilde{g}}_{1}=\hat{\tilde{g}}_{2}$ satisfies 
$\hat{\tilde{g}}(\xi) \leq c_{2}\left|\xi\right|^{q_{1}}+c_{1},$ and apply duality to recover 
$\hat{\tilde{f}}(\xi) \geq c_{2}^{*}\left|\xi\right|^{p_{2}}+c_{1}^{*}.$
		
To get the upper bound on $\hat{\tilde{g}}$ we use the following lemma.
\begin{lemma}
\label{lemmatau}
There exists $\tau$ with $\text{div}\,\tau=0$ in $Y$, such that $\tau\cdot n$ is anti-periodic on the boundary 
of $Y$, $\tau=-\xi$ in $R_{1}$, and $$\int_{Y}\left|\tau(y)\right|^{q_{1}}dy\leq C\left|\xi\right|^{q_{1}}.$$
\end{lemma}

\begin{proof}
Let the function $\varphi\in W^{1,p_{2}}_*(R_{2})$ be the solution of
\begin{equation*}
	\begin{cases}
		\hbox{ $\nabla\varphi|\nabla\varphi|^{p-2}\cdot n$ is anti-periodic on $\partial R_2\cap\partial Y$};\\
		\Delta_{p_{2}}\varphi=0  \text{ in $R_{2}$;}\\
		\nabla\varphi\left|\nabla\varphi\right|^{p_{2}-2}\cdot n_{\big|_{2}}=-\xi\cdot n_{\big|_{1}};  \text{ on $\Gamma$,}
	\end{cases}
\end{equation*}
where the subscript $1$ indicates the trace on the $R_{1}$ side of $\Gamma$ and $2$ indicates the trace on the 
$R_{2}$ side of $\Gamma$.  The Neumann problem given above is the stationarity condition for the energy 
$\displaystyle \int_{R_2}|\nabla\phi|^{p_2}dx-\int_\Gamma\phi \xi\cdot n\,dS$ when minimized over all 
$\phi\in W_*^{1,p_2}(R_2)$.  The solution of the Neumann problem is unique up to a constant.  Here the anti-periodic 
boundary condition on $\nabla\varphi|\nabla\varphi|^{p-2}\cdot n$ is the natural boundary condition for the problem.
	
Now we define $\tau$ according to  
\begin{equation*}
	\displaystyle
	\tau=\begin{cases}
		-\xi;&\text{ in $R_{1}$}\\ 
		\nabla\varphi\left|\nabla\varphi\right|^{p_{2}-2};&\text{ in $R_{2}$}
	\end{cases}
\end{equation*}
and it follows that
\begin{equation}
	\label{above}
	\left|\tau\right|^{q_{1}}=
	\begin{cases}
		\left|\xi\right|^{q_{1}}; & \text{ in $R_{1}$}\\
\left[\left(\nabla\varphi\left|\nabla\varphi\right|^{p_{2}-2}\right)^{2}\right]^{\frac{q_{1}}{2}}=\left(\left|\nabla\varphi\right|^{p_{2}-1}\right)^{q_{1}}=\left|\nabla\varphi\right|^{p_{2}}; & \text{ in $R_{2}$}.
	\end{cases}
\end{equation}	
	 
Then, for $\psi\in W^{1,p_{2}}_*(R_{2})$ we have
\begin{align}
	\label{above1}
	&\int_{R_{2}}\left|\nabla\varphi\right|^{p_{2}-2}\nabla\varphi\cdot\nabla\psi dy\\
	&\quad=\int_{\Gamma}\psi\left|\nabla\varphi\right|^{p_{2}-2}\nabla\varphi\cdot ndS+\int_{\partial R_{2}\cap\partial Y}\psi\left|\nabla\varphi\right|^{p_{2}-2}\nabla\varphi\cdot ndS \notag\\
	&\quad=-\int_{\Gamma}\psi\xi\cdot ndS=-\int_{R_2}\nabla\psi\cdot\xi\,dy.\notag
\end{align} 
	
Set $\psi=\varphi$ in (\ref{above1}) and an application of  H\"{o}lder's inequality gives
\begin{equation}
	\label{above2}
	\displaystyle
	\int_{R_{2}}\left|\nabla\varphi(y)\right|^{p_{2}}dy\leq\int_{R_{2}}\left|\xi\right|^{q_{1}}dy.
\end{equation}
	
Therefore, using (\ref{above}) and (\ref{above2}), we have 
\begin{align*}
	\displaystyle
	\int_{Y}\left|\tau(y)\right|^{q_{1}}dy&=\int_{R_{1}}\left|\tau(y)\right|^{q_{1}}dy+\int_{R_{2}}\left|\tau(y)\right|^{q_{1}}dy\\
	&=\int_{R_{1}}\left|\xi\right|^{q_{1}}dy+\int_{R_{2}}\left|\nabla\varphi(y)\right|^{p_{2}}dy\leq C\left|\xi\right|^{q_{1}}.
\end{align*}
\end{proof}

Taking $\hat{\tilde{g}}$ to be the conjugate of $\hat{\tilde{f}}$, and choosing $\tau$ in $Sol^{q_{1}}(Y)$ 
as in Lemma~\ref{lemmatau}, we obtain 
\begin{align*}
	&\hat{\tilde{g}}(\xi)=\inf_{\text{$\tau$ in $Sol^{q_{1}}(Y)$}}\int_{Y}\tilde{g}(y,\xi+\tau)dy\leq\int_{Y}\tilde{g}(y,\xi+\tau)dy\\
	&\quad\leq \int_{R_{1}}\tilde{g}(y,0)dy+\int_{R_{2}}\tilde{g}(y,\xi+\tau)dy\leq c_{1}+c_{2}\int_{R_{2}}\left|\xi+\tau\right|^{q_{1}}dy\leq c_{1}+c_{2}\left|\xi\right|^{q_{1}},	
\end{align*}
and the left hand inequality in (\ref{standard}) follows from duality.

This concludes the proof of Theorem~\ref{REG}.
\end{proof}

Collecting results we now prove Theorem~\ref{regularity}.  Indeed the minimizer of $E_{1}$ is precisely 
the solution $u$ of (\ref{homogenized}) and (\ref{u}).  Theorem~\ref{REG} establishes the coercivity of 
$E_{1}$ over $W_{0}^{1,p_{2}}(\Omega)$, thus the solution $u$ lies in $W_{0}^{1,p_{2}}(\Omega)$.

\section{Some Useful Lemmas and Estimates}
	
In this section we state and prove a priori bounds and convergence properties for the sequences $p_{\epsilon}$ 
defined in (\ref{p epsilon}), $\nabla u_{\epsilon}$, and $A_{\epsilon}(x, p_{\epsilon}(x,\nabla u_{\epsilon}))$ 
that are used in the proof of the main results of this paper.  
	
\begin{lemma}
\label{lemma1}
For every $\xi\in\mathbb{R}^{n}$ we have
\begin{equation}
	\label{Lemma 1}
	\displaystyle 
	\int_{Y}\chi_{1}(y)\left|p(y,\xi)\right|^{p_{1}}dy + \int_{Y}\chi_{2}(y)\left|p(y,\xi)\right|^{p_{2}}dy \leq C\left(1+\left|\xi\right|^{p_{1}}\theta_{1}+\left|\xi\right|^{p_{2}}\theta_{2}\right),
\end{equation}
and by a change of variables, we obtain
\begin{equation}
	\label{Lemma 1 epsilon}
	\displaystyle 
	\int_{Y_{\epsilon}}\chi_{1}^{\epsilon}(x)\left|p_{\epsilon}(x,\xi)\right|^{p_{1}}dx + \int_{Y_{\epsilon}}\chi_{2}^{\epsilon}(x)\left|p_{\epsilon}(x,\xi)\right|^{p_{2}}dx \leq C\left(1+\left|\xi\right|^{p_{1}}\theta_{1}+\left|\xi\right|^{p_{2}}\theta_{2}\right)\left|Y_{\epsilon}\right|
\end{equation}
\end{lemma}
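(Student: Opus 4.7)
The plan is to use the cell problem to turn the quantity to be bounded into something linear in $\xi$, and then close the estimate with Young's inequality in the standard way.

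First I would test the cell problem \eqref{cell} with $w=\upsilon_\xi$ itself. Since $\nabla\upsilon_\xi=p(y,\xi)-\xi$, this gives the identity
\begin{equation*}
\int_Y \bigl(A(y,p(y,\xi)),p(y,\xi)\bigr)\,dy=\int_Y\bigl(A(y,p(y,\xi)),\xi\bigr)\,dy,
\end{equation*}
which is precisely \eqref{inner product a with p}. Next, since $A(y,0)=0$, the monotonicity inequality \eqref{MonA} applied with $\xi_1=p(y,\xi)$ and $\xi_2=0$ yields a lower bound on the left-hand side by
\begin{equation*}
C_2\left(\int_Y\chi_1(y)|p(y,\xi)|^{p_1}dy+\int_Y\chi_2(y)|p(y,\xi)|^{p_2}dy\right),
\end{equation*}
which is the quantity we want to control.

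For the right-hand side, I would use the explicit form \eqref{A}: $|A(y,\eta)|=\sigma(y)|\eta|^{p(y)-1}$, to obtain
\begin{equation*}
\left|\int_Y(A(y,p(y,\xi)),\xi)\,dy\right|\le C|\xi|\left(\int_Y\chi_1(y)|p(y,\xi)|^{p_1-1}dy+\int_Y\chi_2(y)|p(y,\xi)|^{p_2-1}dy\right).
\end{equation*}
Each of the two terms on the right is then split by H\"older's inequality (using the conjugate pair $(p_i,q_i)$) against the indicator functions, producing a factor $\theta_i^{1/p_i}=(\int_Y\chi_i\,dy)^{1/p_i}$ and a factor $(\int_Y\chi_i|p|^{p_i}dy)^{(p_i-1)/p_i}$. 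A subsequent application of Young's inequality with a small parameter $\delta>0$ bounds each such product by $\delta\int_Y\chi_i|p|^{p_i}dy+C_\delta|\xi|^{p_i}\theta_i$.

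Choosing $\delta$ small enough so that the $\delta$-terms are absorbed by the monotonicity lower bound produces \eqref{Lemma 1}, with the harmless additive constant $1$ coming from the generous bound $C_\delta|\xi|^{p_i}\theta_i\le C(1+|\xi|^{p_i}\theta_i)$. The rescaled inequality \eqref{Lemma 1 epsilon} then follows immediately from the change of variables $y=x/\epsilon$, which scales the integrals by the Jacobian $\epsilon^n=|Y_\epsilon|$. The only mildly delicate point is correctly tracking the conjugate exponents in the two-phase H\"older/Young step so that the $|p|^{p_i}$ factor on the right carries exactly the exponent being absorbed on the left; everything else is bookkeeping.
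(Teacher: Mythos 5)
Your argument is correct and is essentially the paper's proof: both start from the cell-problem identity $\int_Y\bigl(A(y,p(y,\xi)),p(y,\xi)\bigr)\,dy=\int_Y\bigl(A(y,p(y,\xi)),\xi\bigr)\,dy$, use the monotonicity bound (\ref{MonA}) with $\xi_2=0$ for the lower bound, estimate the right-hand side phase by phase via H\"older and Young, and absorb the small-$\delta$ terms into the left. The only cosmetic difference is that you invoke the explicit power-law form $|A(y,\eta)|=\sigma(y)|\eta|^{p(y)-1}$ where the paper instead uses the continuity bound (\ref{ConA}), which carries the extra $(1+|p|)$ factors and hence the additional $\delta^{q_2}\theta_1+\delta^{q_1}\theta_2$ constant in the intermediate estimate.
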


\begin{proof}
Let $\xi\in\mathbb{R}^{n}$.  By (\ref{MonA}) we have that 
$$\left(A(y,p(y,\xi)),p(y,\xi)\right)\geq C\left(\chi_{1}(y)\left|p(y,\xi)\right|^{p_{1}}+\chi_{2}(y)\left|p(y,\xi)\right|^{p_{2}}\right)$$

Integrating both sides over $Y$, using (\ref{ConA}), and Young's Inequality, we get
\begin{align*}
	\displaystyle
	&\int_{Y}\chi_{1}(y)\left|p(y,\xi)\right|^{p_{1}}dy + \int_{Y}\chi_{2}(y)\left|p(y,\xi)\right|^{p_{2}}dy\\
	&\quad\leq C\left[\left(\delta^{q_{2}}\theta_{1}+\delta^{q_{1}}\theta_{2}\right)+\left(\frac{\left|\xi\right|^{p_{1}}\theta_{1}}{\delta^{p_{1}}}+\frac{\left|\xi\right|^{p_{2}}\theta_{2}}{\delta^{p_{2}}}\right) \right.\\
	&\qquad + \left. (\delta^{q_{2}}+\delta^{q_{1}})\left(\int_{Y}\chi_{1}(y)\left|p(y,\xi)\right|^{p_{1}}dy+\int_{Y}\chi_{2}(y)\left|p(y,\xi)\right|^{p_{2}}dy\right)\right]
\end{align*}
		
Doing some algebraic manipulations, we obtain 
\begin{align*}
	\displaystyle
	&\left(1-C(\delta^{q_{2}}+\delta^{q_{1}})\right)\left(\int_{Y}\chi_{1}(y)\left|p(y,\xi)\right|^{p_{1}}dy + \int_{Y}\chi_{2}(y)\left|p(y,\xi)\right|^{p_{2}}dy\right)\\
	&\quad\leq C\left[\left(\delta^{q_{2}}\theta_{1}+\delta^{q_{1}}\theta_{2}\right)+\left(\delta^{-p_{1}}\left|\xi\right|^{p_{1}}\theta_{1}+\delta^{-p_{2}}\left|\xi\right|^{p_{2}}\theta_{2}\right)\right]
\end{align*}
		
On choosing an appropiate $\delta$, we finally obtain (\ref{Lemma 1}).
\end{proof}

\begin{lemma}
\label{lemma2}
For every $\xi_{1},\xi_{2}\in\mathbb{R}^{n}$ we have	
\begin{align}
	\label{Lemma 2}
	\displaystyle
	&\int_{Y}\chi_{1}(y)\left|p(y,\xi_{1})-p(y,\xi_{2})\right|^{p_{1}}dy+\int_{Y}\chi_{2}(y)\left|p(y,\xi_{1})-p(y,\xi_{2})\right|^{p_{2}}dy\\	
	&\quad\leq C\left[\left(1+\left|\xi_{1}\right|^{p_{1}}\theta_{1}+\left|\xi_{1}\right|^{p_{2}}\theta_{2}+\left|\xi_{2}\right|^{p_{1}}\theta_{1}+\left|\xi_{2}\right|^{p_{2}}\theta_{2}\right)^{\frac{p_{1}-2}{p_{1}-1}}\left|\xi_{1}-\xi_{2}\right|^{\frac{p_{1}}{p_{1}-1}}\theta_{1}^{\frac{1}{p_{1}-1}}\right.\notag\\
	&\qquad\left.+ \left(1+\left|\xi_{1}\right|^{p_{1}}\theta_{1}+\left|\xi_{1}\right|^{p_{2}}\theta_{2}+\left|\xi_{2}\right|^{p_{1}}\theta_{1}+\left|\xi_{2}\right|^{p_{2}}\theta_{2}\right)^{\frac{p_{2}-2}{p_{2}-1}}\left|\xi_{1}-\xi_{2}\right|^{\frac{p_{2}}{p_{2}-1}}\theta_{2}^{\frac{1}{p_{2}-1}}\right]\notag
\end{align}		
and by doing a change a variables, we obtain
\begin{align}
	\label{Lemma 2 epsilon}
	\displaystyle 
	&\int_{Y_{\epsilon}}\chi_{1}^{\epsilon}(x)\left|p_{\epsilon}(x,\xi_{1})-p_{\epsilon}(x,\xi_{2})\right|^{p_{1}}dx + \int_{Y_{\epsilon}}\chi_{2}^{\epsilon}(x)\left|p_{\epsilon}(x,\xi_{1})-p_{\epsilon}(x,\xi_{2})\right|^{p_{2}}dx\\
	&\quad\leq C\left[\left(1+\left|\xi_{1}\right|^{p_{1}}\theta_{1}+\left|\xi_{1}\right|^{p_{2}}\theta_{2}+\left|\xi_{2}\right|^{p_{1}}\theta_{1}+\left|\xi_{2}\right|^{p_{2}}\theta_{2}\right)^{\frac{p_{1}-2}{p_{1}-1}}\left|\xi_{1}-\xi_{2}\right|^{\frac{p_{1}}{p_{1}-1}}\theta_{1}^{\frac{1}{p_{1}-1}}\right.\notag\\
	&\qquad\left.+ \left(1+\left|\xi_{1}\right|^{p_{1}}\theta_{1}+\left|\xi_{1}\right|^{p_{2}}\theta_{2}+\left|\xi_{2}\right|^{p_{1}}\theta_{1}+\left|\xi_{2}\right|^{p_{2}}\theta_{2}\right)^{\frac{p_{2}-2}{p_{2}-1}}\left|\xi_{1}-\xi_{2}\right|^{\frac{p_{2}}{p_{2}-1}}\theta_{2}^{\frac{1}{p_{2}-1}}\right]\left|Y_{\epsilon}\right|\notag 
\end{align}
\end{lemma}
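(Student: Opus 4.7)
\noindent\textbf{Proof proposal for Lemma~\ref{lemma2}.}
The plan is to obtain a self-referential inequality for the left-hand side
$$S := \int_{Y}\chi_{1}(y)\left|p(y,\xi_{1})-p(y,\xi_{2})\right|^{p_{1}}dy+\int_{Y}\chi_{2}(y)\left|p(y,\xi_{1})-p(y,\xi_{2})\right|^{p_{2}}dy$$
and then absorb the ``bad'' term via Young's inequality. First, applying the monotonicity property (\ref{MonA}) pointwise and integrating over $Y$,
$$S \leq C\int_{Y}\left(A(y,p(y,\xi_1))-A(y,p(y,\xi_2)),\,p(y,\xi_1)-p(y,\xi_2)\right)dy.$$
Since $p(y,\xi_i)=\xi_i+\nabla \upsilon_{\xi_i}(y)$ and $\upsilon_{\xi_1}-\upsilon_{\xi_2}\in W_{per}^{1,p_1}(Y)$ is an admissible test function in the cell problem (\ref{cell}), the contribution of the gradient terms vanishes, leaving
$$S \leq C\int_{Y}\left(A(y,p(y,\xi_1))-A(y,p(y,\xi_2)),\,\xi_1-\xi_2\right)dy = C\,(b(\xi_1)-b(\xi_2),\xi_1-\xi_2).$$

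Next, I would estimate this scalar product using (\ref{ConA}) and Hölder's inequality, exactly as in the proof of (\ref{Conb}). By Cauchy--Schwarz and (\ref{ConA}),
$$(b(\xi_1)-b(\xi_2),\xi_1-\xi_2)\leq C|\xi_1-\xi_2|\sum_{i=1,2}\int_{Y}\chi_{i}(y)|p(y,\xi_1)-p(y,\xi_2)|(1+|p(y,\xi_1)|+|p(y,\xi_2)|)^{p_i-2}dy.$$
Applying Hölder in each term with exponents $p_i$ and $q=p_i/(p_i-1)$ splits the integrand into a factor of $S_{1i}^{1/p_i}$ (where $S_{11},S_{12}$ are the two summands defining $S$) and a factor of the form $\bigl(\int_{Y}\chi_i(1+|p(\cdot,\xi_1)|+|p(\cdot,\xi_2)|)^{p_i(p_i-2)/(p_i-1)}dy\bigr)^{(p_i-1)/p_i}$. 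Because $(p_i-2)/(p_i-1)\in[0,1]$, Jensen's inequality (with normalization by $\theta_i=\int_Y\chi_i\,dy$) produces a factor of $\theta_i^{1/(p_i-1)}$ together with an appropriate power of $\int_Y\chi_i(1+|p(\cdot,\xi_j)|)^{p_i}dy$; Lemma~\ref{lemma1} then bounds the latter by the quantity
$$M:=1+|\xi_1|^{p_1}\theta_1+|\xi_1|^{p_2}\theta_2+|\xi_2|^{p_1}\theta_1+|\xi_2|^{p_2}\theta_2.$$

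Combining these steps, the intermediate estimate reads
$$S\leq C|\xi_1-\xi_2|\left[S_{11}^{1/p_1}\theta_1^{1/p_1}M^{(p_1-2)/p_1}+S_{12}^{1/p_2}\theta_2^{1/p_2}M^{(p_2-2)/p_2}\right].$$
Since $S_{1i}\leq S$, the final step is Young's inequality $ab\leq \delta\,a^{p_i}+C_\delta\,b^{q_i}$ applied to each summand, which produces a term $\delta S$ plus a term with exactly the desired exponents $|\xi_1-\xi_2|^{p_i/(p_i-1)}\theta_i^{1/(p_i-1)}M^{(p_i-2)/(p_i-1)}$. Choosing $\delta$ small enough lets us absorb the $\delta S$ into the left-hand side and conclude (\ref{Lemma 2}). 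The scaled estimate (\ref{Lemma 2 epsilon}) follows immediately by the change of variables $y=x/\epsilon$, which introduces the factor $|Y_\epsilon|=\epsilon^n$. The main technical point is the bookkeeping in the Hölder/Jensen step: one must verify that the exponent $q(p_i-2)=p_i(p_i-2)/(p_i-1)$ lies in $[0,p_i]$ so that Jensen may be applied in the correct direction, and that the volume-fraction weights combine to produce precisely $\theta_i^{1/(p_i-1)}$ after taking the $1/q$-th root and applying Young's inequality.
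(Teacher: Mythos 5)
Your proposal is correct and follows essentially the same route as the paper: apply (\ref{MonA}), kill the gradient contributions via the cell problem (\ref{cell}) to reduce to $\int_{Y}\bigl(A(y,p(y,\xi_1))-A(y,p(y,\xi_2)),\xi_1-\xi_2\bigr)dy$, then (\ref{ConA}), H\"older, Lemma~\ref{lemma1}, and finally Young's inequality to absorb the good term. The only cosmetic difference is in the H\"older step: the paper uses a three-exponent H\"older with exponents $p_i/(p_i-2)$, $p_i$, $p_i$ (the third slot being the constant $1$, which directly produces the $\theta_i^{1/p_i}$ factor), whereas you use a two-exponent H\"older with conjugate pair $p_i$, $p_i/(p_i-1)$ and then recover the $\theta_i$ power by a Jensen step on the concave map $t\mapsto t^{(p_i-2)/(p_i-1)}$; the two computations land on the same intermediate bound. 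Your explicit identification of the intermediate quantity with $(b(\xi_1)-b(\xi_2),\xi_1-\xi_2)$ is a harmless clarification not spelled out in the paper but implicit in it.
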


\begin{proof}
By (\ref{MonA}), (\ref{cell}), and (\ref{ConA}) we have that
\begin{align*}
\label{lemma2-1}
	\displaystyle
	&\int_{Y}\chi_{1}(y)\left|p(y,\xi_{1})-p(y,\xi_{2})\right|^{p_{1}}dy+\int_{Y}\chi_{2}(y)\left|p(y,\xi_{1})-p(y,\xi_{2})\right|^{p_{2}}dy\\
	&\quad\leq C\int_{Y}\left|A(y,p(y,\xi_{1}))-A(y,p(y,\xi_{2}))\right|\left|\xi_{1}-\xi_{2}\right|dy\\
  &\quad\leq C\left[\int_{Y}\chi_{1}(y)\left|p(y,\xi_{1})-p(y,\xi_{2})\right|\left(1+\left|p(y,\xi_{1})\right|+\left|p(y,\xi_{2})\right|\right)^{p_{1}-2}\left|\xi_{1}-\xi_{2}\right|dy\right.\\
  &\qquad+\left. \int_{Y}\chi_{2}(y)\left|p(y,\xi_{1})-p(y,\xi_{2})\right|\left(1+\left|p(y,\xi_{1})\right|+\left|p(y,\xi_{2})\right|\right)^{p_{2}-2}\left|\xi_{1}-\xi_{2}\right|dy\right] 
	&\intertext{Using Holder's inequality in the first term with $\displaystyle r_{1}=p_{1}/(p_{1}-2)$, $r_{2}=p_{1}$, $r_{3}=p_{1}$, and in the second term with $\displaystyle s_{1}=p_{2}/(p_{2}-2)$, $s_{2}=p_{2}$, $s_{3}=p_{2}$, and using Lemma~\ref{lemma1}, we obtain}
	&\quad\leq C\left[\left(1+\left|\xi_{1}\right|^{p_{1}}\theta_{1}+\left|\xi_{1}\right|^{p_{2}}\theta_{2}+\left|\xi_{2}\right|^{p_{1}}\theta_{1}+\left|\xi_{2}\right|^{p_{2}}\theta_{2}\right)^{\frac{p_{1}-2}{p_{1}}}\right.\\
	&\qquad\quad\times			\left|\xi_{1}-\xi_{2}\right|\theta_{1}^{\frac{1}{p_{1}}}\left(\int_{Y}\chi_{1}(y)\left|p(y,\xi_{1})-p(y,\xi_{2})\right|^{p_{1}}dy\right)^{\frac{1}{p_{1}}}\\
	&\qquad+ \left(1+\left|\xi_{1}\right|^{p_{1}}\theta_{1}+\left|\xi_{1}\right|^{p_{2}}\theta_{2}+\left|\xi_{2}\right|^{p_{1}}\theta_{1}+\left|\xi_{2}\right|^{p_{2}}\theta_{2}\right)^{\frac{p_{2}-2}{p_{2}}}\\
	&\qquad\quad\times\left.			\left|\xi_{1}-\xi_{2}\right|\theta_{2}^{\frac{1}{p_{2}}}\left(\int_{Y}\chi_{2}(y)\left|p(y,\xi_{1})-p(y,\xi_{2})\right|^{p_{2}}dy\right)^{\frac{1}{p_{2}}}\right]
	&\intertext{By Young's inequality, we get}
	&\quad\leq C\left[\frac{\delta^{-q_{2}}\left(1+\left|\xi_{1}\right|^{p_{1}}\theta_{1}+\left|\xi_{1}\right|^{p_{2}}\theta_{2}+\left|\xi_{2}\right|^{p_{1}}\theta_{1}+\left|\xi_{2}\right|^{p_{2}}\theta_{2}\right)^{\frac{(p_{1}-2)q_{2}}{p_{1}}}\left|\xi_{1}-\xi_{2}\right|^{q_{2}}\theta_{1}^{\frac{q_{2}}{p_{1}}}}{q_{2}}\right.\\
	&\qquad+ \frac{\delta^{p_{1}}\displaystyle\int_{Y}\chi_{1}(y)\left|p(y,\xi_{1})-p(y,\xi_{2})\right|^{p_{1}}dy}{p_{1}}+\frac{\delta^{p_{2}}\displaystyle\int_{Y}\chi_{2}(y)\left|p(y,\xi_{1})-p(y,\xi_{2})\right|^{p_{2}}dy}{p_{2}}\\
	&\qquad+\left. \frac{\delta^{-q_{1}}\left(1+\left|\xi_{1}\right|^{p_{1}}\theta_{1}+\left|\xi_{1}\right|^{p_{2}}\theta_{2}+\left|\xi_{2}\right|^{p_{1}}\theta_{1}+\left|\xi_{2}\right|^{p_{2}}\theta_{2}\right)^{\frac{(p_{2}-2)q_{1}}{p_{2}}}\left|\xi_{1}-\xi_{2}\right|^{q_{1}}\theta_{2}^{\frac{q_{1}}{p_{2}}}}{q_{1}} \right]
\end{align*}
	
Straightforward algebraic manipulation delivers 
\begin{align*}
\displaystyle
	& k_{\delta}\left(\int_{Y}\chi_{1}(y)\left|p(y,\xi_{1})-p(y,\xi_{2})\right|^{p_{1}}dy+\int_{Y}\chi_{2}(y)\left|p(y,\xi_{1})-p(y,\xi_{2})\right|^{p_{2}}dy\right)\\
	&\quad\leq  C\left[\frac{\delta^{-q_{2}}\left(1+\left|\xi_{1}\right|^{p_{1}}\theta_{1}+\left|\xi_{1}\right|^{p_{2}}\theta_{2}+\left|\xi_{2}\right|^{p_{1}}\theta_{1}+\left|\xi_{2}\right|^{p_{2}}\theta_{2}\right)^{\frac{p_{1}-2}{p_{1}-1}}\left|\xi_{1}-\xi_{2}\right|^{\frac{p_{1}}{p_{1}-1}}\theta_{1}^{\frac{1}{p_{1}-1}}}{q_{2}}\right.\\
	&\qquad+\left. \frac{\delta^{-q_{1}}\left(1+\left|\xi_{1}\right|^{p_{1}}\theta_{1}+\left|\xi_{1}\right|^{p_{2}}\theta_{2}+\left|\xi_{2}\right|^{p_{1}}\theta_{1}+\left|\xi_{2}\right|^{p_{2}}\theta_{2}\right)^{\frac{p_{2}-2}{p_{2}-1}}\left|\xi_{1}-\xi_{2}\right|^{\frac{p_{2}}{p_{2}-1}}\theta_{2}^{\frac{1}{p_{2}-1}}}{q_{1}}\right] 
\end{align*}
where $k_{\delta}=\min\left\{\left(1-\frac{C\delta^{p_{1}}}{p_{1}}\right),\left(1-\frac{C\delta^{p_{2}}}{p_{2}}\right)\right\}$.

The result follows on choosing $\delta$ small enough so that $k_{\delta}$ is positive. 
\end{proof}

\begin{lemma}
\label{lemma3} 
Let $\varphi$ be such that $$\sup_{\epsilon>0}\left\{\int_{\Omega}\chi_{1}^{\epsilon}(x)\left|\varphi(x)\right|^{p_{1}}dx + \int_{\Omega}\chi_{2}^{\epsilon}(x)\left|\varphi(x)\right|^{p_{2}}dx\right\}<\infty, $$
and let $\Psi$ be a simple function of the form 
\begin{equation}
	\label{Psi}
	\Psi(x)=\sum_{j=0}^{m}\eta_{j}\chi_{\Omega_{j}}(x),
\end{equation}	
with $\eta_{j}\in\mathbb{R}^{n}\setminus\left\{0\right\}$, $\Omega_{j}\subset\subset\Omega$, 
$\left|\partial\Omega_{j}\right|=0$, $\Omega_{j}\cap\Omega_{k}=\emptyset$ for $j\neq k$ and $j,k=1,...,m$; 
and set $\eta_{0}=0$ and $\displaystyle \Omega_{0}=\Omega\setminus\bigcup_{j=1}^{m}\Omega_{j}$.	 Then
\begin{align}
\label{lemma3formula}
	\displaystyle 		&\limsup_{\epsilon\rightarrow0}\left(\int_{\Omega}\chi_{1}^{\epsilon}(x)\left|p_{\epsilon}(x,M_{\epsilon}\varphi(x))-p_{\epsilon}(x,\Psi(x))\right|^{p_{1}}dx\right.\\
	&\quad+\left.\int_{\Omega}\chi_{2}^{\epsilon}(x)\left|p_{\epsilon}(x,M_{\epsilon}\varphi(x))-p_{\epsilon}(x,\Psi(x))\right|^{p_{2}}dx\right)\notag\\
	&\qquad\leq\limsup_{\epsilon\rightarrow0}\,C\sum_{i=1}^{2}\left[\left(\left|\Omega\right|+\int_{\Omega}\chi_{1}^{\epsilon}(x)\left|\varphi(x)\right|^{p_{1}}dx + \int_{\Omega}\chi_{2}^{\epsilon}(x)\left|\varphi(x)\right|^{p_{2}}dx\notag\right.\right.\\
	&\qquad\quad+\left. \int_{\Omega}\chi_{1}^{\epsilon}(x)\left|\Psi(x)\right|^{p_{1}}dx+\int_{\Omega}\chi_{2}^{\epsilon}(x)\left|\Psi(x)\right|^{p_{2}}dx\right)^{\frac{p_{i}-2}{p_{i}-1}}\notag\\
	&\qquad\qquad\times \left. \left(\int_{\Omega}\chi_{i}^{\epsilon}(x)\left|\varphi(x)-\Psi(x)\right|^{p_{i}}dx\right)^{\frac{1}{p_{i}-1}}\right]\notag
\end{align}
\end{lemma}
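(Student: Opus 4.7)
My plan is to reduce the estimate to a cellwise application of Lemma~\ref{lemma2} followed by a discrete H\"older inequality. First I would split the index set $I_\epsilon$ as $J_\epsilon\cup L_\epsilon$, where $J_\epsilon=\bigcup_{j=0}^{m}\{i\in I_\epsilon:Y_\epsilon^i\subset\Omega_j\}$ collects the ``interior'' cells (those contained in a single $\Omega_j$) and $L_\epsilon$ collects the ``boundary'' cells (those that straddle one of the interfaces $\partial\Omega_j$). Since $|\partial\Omega_j|=0$ and each $\Omega_j\subset\subset\Omega$, the union $\bigcup_{i\in L_\epsilon}Y_\epsilon^i$ is contained in an $O(\epsilon)$-tubular neighborhood of $\bigcup_j\partial\Omega_j$ and therefore has Lebesgue measure tending to zero with $\epsilon$.

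On an interior cell $Y_\epsilon^i\subset\Omega_j$, both $M_\epsilon\varphi$ and $\Psi$ are constant, taking the values $(M_\epsilon\varphi)_i$ and $\eta_j$ respectively, so Lemma~\ref{lemma2} applies directly with $\xi_1=(M_\epsilon\varphi)_i$ and $\xi_2=\eta_j$. Summing these cellwise bounds over $i\in J_\epsilon$ and applying the discrete H\"older inequality with the conjugate exponents $(p_k-1)/(p_k-2)$ and $p_k-1$ (which are conjugate since $\frac{p_k-2}{p_k-1}+\frac{1}{p_k-1}=1$) separates the ``large'' factor $(1+|\xi_1|^{p_1}\theta_1+\cdots)^{(p_k-2)/(p_k-1)}$ from the ``small'' factor $|\xi_1-\xi_2|^{p_k/(p_k-1)}\theta_k^{1/(p_k-1)}$. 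The cellwise sum collapses into a product of two sums, each weighted by $|Y_\epsilon^i|$.

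Because $f$ constant on $Y_\epsilon^i$ yields $\int_{Y_\epsilon^i}\chi_k^\epsilon f\,dx=f\,\theta_k|Y_\epsilon^i|$, the summed quantities can be recognized as integrals of $\chi_k^\epsilon|M_\epsilon\varphi|^{p_k}$, $\chi_k^\epsilon|\Psi|^{p_k}$, and $\chi_k^\epsilon|M_\epsilon(\varphi-\Psi)|^{p_k}$ over the union of interior cells (using that $M_\epsilon\Psi=\Psi$ on interior cells, so $(M_\epsilon\varphi)_i-\eta_j=M_\epsilon(\varphi-\Psi)|_{Y_\epsilon^i}$). Jensen's inequality applied to $M_\epsilon$ cellwise, combined with the elementary comparison $|s|^{p_1}\leq 1+|s|^{p_2}$, would then let me replace $|M_\epsilon\varphi|^{p_k}$ and $|M_\epsilon\Psi|^{p_k}$ by quantities controlled by $|\Omega|+\int_\Omega\chi_1^\epsilon|\varphi|^{p_1}\,dx+\int_\Omega\chi_2^\epsilon|\varphi|^{p_2}\,dx$ and by the analogous expression for $\Psi$, producing exactly the first factor of the claimed bound.

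The main obstacle I expect is the analogous conversion for the ``small'' factor: passing from $\int\chi_k^\epsilon|M_\epsilon(\varphi-\Psi)|^{p_k}$ to $\int\chi_k^\epsilon|\varphi-\Psi|^{p_k}$. A cellwise Jensen step naturally produces an unweighted integral of $|\varphi-\Psi|^{p_k}$, which generates cross terms of the form $\chi_{3-k}^\epsilon|\varphi-\Psi|^{p_k}$ that are not literally present on the right-hand side of the lemma; these need to be absorbed into the $|\Omega|$ and the ``large-factor'' terms via the bound $|s|^{p_1}\leq 1+|s|^{p_2}$ and rearrangement under the $\limsup$. Finally, the contribution of the boundary cells $L_\epsilon$ to the left-hand side is controlled by Lemma~\ref{lemma1} applied cellwise (noting that $\Psi$ takes only finitely many values $\eta_0,\ldots,\eta_m$, so $p_\epsilon(\cdot,\Psi)$ decomposes into a finite sum of constant-$\xi$ contributions on each cell) together with the vanishing of $|\bigcup_{i\in L_\epsilon}Y_\epsilon^i|$ as $\epsilon\to 0$, so that these cells drop out under the $\limsup$.
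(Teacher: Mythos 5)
Your proposal reproduces the paper's skeleton — decompose $I_\epsilon$ into cells interior to a single $\Omega_j$ and the boundary cells, apply Lemma~\ref{lemma2} cellwise with $\xi_1=(M_\epsilon\varphi)_i$, $\xi_2=\eta_j$, run a discrete H\"older inequality with conjugate exponents $\frac{p_k-1}{p_k-2}$ and $p_k-1$, and kill the boundary contribution using $|F_\epsilon^j|\to 0$ and Lemma~\ref{lemma1}. You also correctly identify the only nontrivial obstacle: converting the ``small factor'' from $\bigl(\int\chi_k^\epsilon|M_\epsilon\varphi-\Psi|^{p_k}\bigr)^{1/(p_k-1)}$ into $\bigl(\int\chi_k^\epsilon|\varphi-\Psi|^{p_k}\bigr)^{1/(p_k-1)}$. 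But your proposed fix via Jensen plus absorption does not close this gap, and the paper uses a different mechanism.

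Cellwise Jensen gives $\int\chi_k^\epsilon|M_\epsilon(\varphi-\Psi)|^{p_k}\,dx\leq\theta_k\int|\varphi-\Psi|^{p_k}\,dx$, and the latter splits as $\theta_k\int\chi_k^\epsilon|\varphi-\Psi|^{p_k}+\theta_k\int\chi_{3-k}^\epsilon|\varphi-\Psi|^{p_k}$. The cross term fails in two distinct ways. For $k=2$, $\int\chi_1^\epsilon|\varphi-\Psi|^{p_2}$ is simply not controlled by the hypothesis $\sup_\epsilon\bigl(\int\chi_1^\epsilon|\varphi|^{p_1}+\int\chi_2^\epsilon|\varphi|^{p_2}\bigr)<\infty$: one only has $\chi_1^\epsilon\varphi\in L^{p_1}$, not $L^{p_2}$, so this quantity may be infinite. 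For $k=1$, the bound $|s|^{p_1}\leq 1+|s|^{p_2}$ gives $\int\chi_2^\epsilon|\varphi-\Psi|^{p_1}\leq|\Omega|+\int\chi_2^\epsilon|\varphi-\Psi|^{p_2}$, and the $|\Omega|$ term necessarily lands \emph{inside} the small factor raised to the power $\tfrac{1}{p_1-1}$. It cannot be ``rearranged'' into the large factor, because the H\"older structure is multiplicative: a nonvanishing additive constant in the small factor destroys the whole point of the lemma, which is precisely that the right-hand side tends to zero as $\Psi\to\varphi$ in the appropriate norm (this is how the lemma is used in Step~2 of the corrector proof with $\|\nabla u-\Psi\|_{L^{p_2}}\leq\delta$). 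The paper avoids all of this by \emph{not} using Jensen on the small factor at all: it writes $M_\epsilon\varphi-\Psi=(M_\epsilon\varphi-\varphi)+(\varphi-\Psi)$ pointwise, so that $\int\chi_k^\epsilon|M_\epsilon\varphi-\Psi|^{p_k}\leq C\bigl(\int\chi_k^\epsilon|M_\epsilon\varphi-\varphi|^{p_k}+\int\chi_k^\epsilon|\varphi-\Psi|^{p_k}\bigr)$, and then invokes Property~(1) of $M_\epsilon$, namely $\int\chi_k^\epsilon|M_\epsilon\varphi-\varphi|^{p_k}\to 0$, so the first piece disappears under the $\limsup$. That triangle-inequality-plus-approximation step is the missing ingredient in your write-up; Jensen (together with $|s|^{p_1}\leq 1+|s|^{p_2}$) is appropriate for the ``large factor'' only, where an additive $|\Omega|$ is harmless.
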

	
\begin{proof}
Let $\Psi$ of the form (\ref{Psi}).  For every $\epsilon>0$, let us denote by 
$\displaystyle\Omega_{\epsilon}=\bigcup_{i\in I_{\epsilon}}\overline{Y_{\epsilon}^{i}};$ 
and for $j=0,1,2,...,m$, we set $$I_{\epsilon}^{j}=\left\{i\in I_{\epsilon}:Y_{\epsilon}^{i}\subseteq\Omega_{j}\right\}\text{, and } J_{\epsilon}^{j}=\left\{i\in I_{\epsilon}:Y_{\epsilon}^{i}\cap\Omega_{j}\neq\emptyset, Y_{\epsilon}^{i}\setminus\Omega_{j}\neq\emptyset\right\}.$$

Furthermore, $\displaystyle E_{\epsilon}^{j}=\bigcup_{i\in I_{\epsilon}^{j}} \overline{Y_{\epsilon}^{i}}$, 
$\displaystyle F_{\epsilon}^{j}=\bigcup_{i\in J_{\epsilon}^{j}} \overline{Y_{\epsilon}^{i}}$, and 
as $\epsilon\rightarrow0$, we have $\left|F_{\epsilon}^{j}\right|\rightarrow0$. 
		
Set $$\displaystyle \xi_{\epsilon}^{i}=\frac{1}{\left|Y_{\epsilon}^{i}\right|}\int_{Y_{\epsilon}^{i}}\varphi(y)dy.$$
For $\epsilon$ sufficiently small $\Omega_{j}$ ($j\neq0$) is contained in $\Omega_{\epsilon}$.

From (\ref{Psi}), (\ref{approximation}), using the fact that $\Omega_{j}\subset E_{\epsilon}^{j}\cup F_{\epsilon}^{j}$, 
Lemma~\ref{lemma2}, and H\"older's inequality it follows that
\begin{align}
\label{lemma3-1}
	\displaystyle
	&\int_{\Omega}\chi_{1}^{\epsilon}(x)\left|p_{\epsilon}(x,M_{\epsilon}\varphi)-p_{\epsilon}(x,\Psi)\right|^{p_{1}}dx + \int_{\Omega}\chi_{2}^{\epsilon}(x)\left|p_{\epsilon}(x,M_{\epsilon}\varphi)-p_{\epsilon}(x,\Psi)\right|^{p_{2}}dx\notag\\
	& \leq
C\left[\left(\left|\Omega\right|+\int_{\Omega}\chi_{1}^{\epsilon}(x)\left|M_{\epsilon}\varphi-\varphi\right|^{p_{1}}dx+\int_{\Omega}\chi_{1}^{\epsilon}(x)\left|\varphi\right|^{p_{1}}dx+\int_{\Omega}\chi_{2}^{\epsilon}(x)\left|M_{\epsilon}\varphi-\varphi\right|^{p_{2}}dx \right.\right.\notag\\
	&+\left.
\int_{\Omega}\chi_{2}^{\epsilon}(x)\left|\varphi(x)\right|^{p_{2}}dx + \int_{\Omega}\chi_{1}^{\epsilon}(x)\left|\Psi(x)\right|^{p_{1}}dx + \int_{\Omega}\chi_{2}^{\epsilon}(x)\left|\Psi(x)\right|^{p_{2}}dx\right)^{\frac{p_{1}-2}{p_{1}-1}}\notag\\
	&\quad\times
\left(\int_{\Omega}\chi_{1}^{\epsilon}(x)\left|M_{\epsilon}\varphi-\varphi\right|^{p_{1}}dx + \int_{\Omega}\chi_{1}^{\epsilon}(x)\left|\varphi-\Psi\right|^{p_{1}}dx\right)^{\frac{1}{p_{1}-1}}\notag\\
	&+\left(\left|\Omega\right|+\int_{\Omega}\chi_{1}^{\epsilon}(x)\left|M_{\epsilon}\varphi-\varphi\right|^{p_{1}}dx + \int_{\Omega}\chi_{1}^{\epsilon}(x)\left|\varphi\right|^{p_{1}}dx + \int_{\Omega}\chi_{2}^{\epsilon}(x)\left|M_{\epsilon}\varphi-\varphi\right|^{p_{2}}dx \right.\notag\\
	&+ \left.
\int_{\Omega}\chi_{2}^{\epsilon}(x)\left|\varphi(x)\right|^{p_{2}}dx+\int_{\Omega}\chi_{1}^{\epsilon}(x)\left|\Psi(x)\right|^{p_{1}}dx+\int_{\Omega}\chi_{2}^{\epsilon}(x)\left|\Psi(x)\right|^{p_{2}}dx\right)^{\frac{p_{2}-2}{p_{2}-1}}\notag\\
	&\quad\times \left.
\left(\int_{\Omega}\chi_{2}^{\epsilon}(x)\left|M_{\epsilon}\varphi-\varphi\right|^{p_{2}}dx+\int_{\Omega}\chi_{2}^{\epsilon}(x)\left|\varphi-\Psi\right|^{p_{2}}dx\right)^{\frac{1}{p_{2}-1}}\right]\notag\\
	& + 
C\sum_{j=0}^{m}\left[\left(\left|F_{\epsilon}^{j}\right| + \int_{F_{\epsilon}^{j}}\left|M_{\epsilon}\varphi(x)\right|^{p_{1}}\theta_{1}dx + \int_{F_{\epsilon}^{j}}\left|M_{\epsilon}\varphi(x)\right|^{p_{2}}\theta_{2}dx\right.\right.\notag\\
	& +\left. 
\left|\eta_{j}\right|^{p_{1}}\theta_{1}\left|F_{\epsilon}^{j}\right| + \left|\eta_{j}\right|^{p_{2}}\theta_{2}\left|F_{\epsilon}^{j}\right|\right)^{\frac{p_{1}-2}{p_{1}-1}}\left(\int_{F_{\epsilon}^{j}}\theta_{1}\left|\sum_{i\in J_{\epsilon}^{j}}\chi_{Y_{\epsilon}^{i}}(x)\xi_{\epsilon}^{i}-\eta_{j}\right|^{p_{1}}dx\right)^{\frac{1}{p_{1}-1}}\notag\\		
	& + 
\left(\left|F_{\epsilon}^{j}\right|+\int_{F_{\epsilon}^{j}}\left|M_{\epsilon}\varphi(x)\right|^{p_{1}}\theta_{1}dx+\int_{F_{\epsilon}^{j}}\left|M_{\epsilon}\varphi(x)\right|^{p_{2}}\theta_{2}dx\right.\notag\\
	& + \left. \left|\eta_{j}\right|^{p_{1}}\theta_{1}\left|F_{\epsilon}^{j}\right|+\left|\eta_{j}\right|^{p_{2}}\theta_{2}\left|F_{\epsilon}^{j}\right|\Big)^{\frac{p_{2}-2}{p_{2}-1}}\left(\int_{F_{\epsilon}^{j}}\theta_{2}\left|\sum_{i\in J_{\epsilon}^{j}}\chi_{Y_{\epsilon}^{i}}(x)\xi_{\epsilon}^{i}-\eta_{j}\right|^{p_{2}}dx\right)^{\frac{1}{p_{2}-1}}\right]
\end{align}

Since $\left|\partial\Omega_{j}\right|=0$ for $j\neq0$, we have that $\left|F_{\epsilon}^{j}\right|\rightarrow0$ 
as $\epsilon\rightarrow0$, for every $j=0,1,2,...,m$.
	
By Property~(1) of $M_{\epsilon}$ mentioned in Section~\ref{CorrectorSection}, we have $$\displaystyle\int_{\Omega}\chi_{i}^{\epsilon}(x)\left|M_{\epsilon}\varphi(x)-\varphi(x)\right|^{p_{i}}dx\rightarrow0, \text{ as $\epsilon\rightarrow0$, for $i=1,2$.}$$ 
	
Therefore, taking $\limsup$ as $\epsilon\rightarrow0$ in (\ref{lemma3-1}), we obtain (\ref{lemma3formula}).
\end{proof}

\begin{lemma}
\label{uniform boundedness of p at M}
	If the microstructure is dispersed or layered, we have that $$\sup_{\epsilon>0}\left\{\int_{\Omega}\chi_{i}^{\epsilon}(x)\left|p_{\epsilon}(x,M_{\epsilon}\nabla u(x))\right|^{p_{i}}dx\right\}\leq C<\infty \text{, for $i=1,2$.}$$
\end{lemma}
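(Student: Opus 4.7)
The plan is to exploit the piecewise-constant structure of $M_{\epsilon}\nabla u$, apply the rescaled estimate of Lemma~\ref{lemma1} cell by cell, and then invoke the higher integrability $\nabla u\in L^{p_{2}}(\Omega;\mathbb{R}^{n})$ delivered by Theorem~\ref{regularity} to close the sum.

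First, I would record that on each cell $Y_{\epsilon}^{i}$ with $i\in I_{\epsilon}$ the function $M_{\epsilon}\nabla u$ is the constant vector $\xi_{\epsilon}^{i}=|Y_{\epsilon}^{i}|^{-1}\int_{Y_{\epsilon}^{i}}\nabla u\,dy$, while $M_{\epsilon}\nabla u\equiv 0$ on $\Omega\setminus\bigcup_{i\in I_{\epsilon}}Y_{\epsilon}^{i}$. By property (\ref{p4}), $p_{\epsilon}(x,0)=0$ almost everywhere, so the integrals in the lemma statement reduce to sums over the interior cells. The rescaled estimate (\ref{Lemma 1 epsilon}) of Lemma~\ref{lemma1} then yields, for each $i\in I_{\epsilon}$,
$$\int_{Y_{\epsilon}^{i}}\chi_{1}^{\epsilon}(x)\bigl|p_{\epsilon}(x,\xi_{\epsilon}^{i})\bigr|^{p_{1}}dx+\int_{Y_{\epsilon}^{i}}\chi_{2}^{\epsilon}(x)\bigl|p_{\epsilon}(x,\xi_{\epsilon}^{i})\bigr|^{p_{2}}dx\leq C\bigl(1+|\xi_{\epsilon}^{i}|^{p_{1}}+|\xi_{\epsilon}^{i}|^{p_{2}}\bigr)|Y_{\epsilon}^{i}|.$$

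Summing this inequality over $i\in I_{\epsilon}$, the right-hand side is bounded by $C(|\Omega|+\|M_{\epsilon}\nabla u\|_{L^{p_{1}}(\Omega)}^{p_{1}}+\|M_{\epsilon}\nabla u\|_{L^{p_{2}}(\Omega)}^{p_{2}})$. By property~(3) of $M_{\epsilon}$ listed in Section~\ref{CorrectorSection} (Jensen's inequality), these moments are dominated by the corresponding norms of $\nabla u$, so the full bound becomes $C(|\Omega|+\|\nabla u\|_{L^{p_{1}}(\Omega)}^{p_{1}}+\|\nabla u\|_{L^{p_{2}}(\Omega)}^{p_{2}})$, which is independent of $\epsilon$.

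The crux, and the reason the hypothesis is restricted to dispersed or layered microstructures, is the presence of the $|\xi_{\epsilon}^{i}|^{p_{2}}$ term on the right of the cell-wise estimate: controlling it uniformly in $\epsilon$ requires $\nabla u\in L^{p_{2}}(\Omega;\mathbb{R}^{n})$ rather than only $L^{p_{1}}$. This higher integrability is precisely what Theorem~\ref{regularity} supplies in the two geometric settings under consideration. Once $\|\nabla u\|_{L^{p_{2}}(\Omega)}$ is known to be finite, the estimate closes and the uniform bound follows at once.
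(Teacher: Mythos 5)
Your proof is correct and follows essentially the same route as the paper: decompose the integral over the cells $Y_{\epsilon}^{i}$ using the fact that $M_{\epsilon}\nabla u$ is constant on each cell (and zero outside, where $p_{\epsilon}(\cdot,0)=0$), apply the rescaled Lemma~\ref{lemma1} cell by cell, sum, use Jensen's inequality (property~3 of $M_{\epsilon}$), and close with the higher integrability $\nabla u\in L^{p_{2}}(\Omega;\mathbb{R}^{n})$ from Theorem~\ref{regularity}. The paper's proof is this argument verbatim, so no further comment is needed.
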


\begin{proof}
Using (\ref{approximation}), we have
\begin{align*}
	\displaystyle 
	& 
\int_{\Omega}\chi_{1}^{\epsilon}(x)\left|p_{\epsilon}(x,M_{\epsilon}\nabla u(x))\right|^{p_{1}}dx + \int_{\Omega}\chi_{2}^{\epsilon}(x)\left|p_{\epsilon}(x,M_{\epsilon}\nabla u(x))\right|^{p_{2}}dx\\
	&\quad = \sum_{i\in\textbf{I}_{\epsilon}}\left[\int_{Y_{\epsilon}^{i}}\chi_{1}^{\epsilon}(x)\left|p_{\epsilon}(x,\xi_{\epsilon}^{i})\right|^{p_{1}}dx + \int_{Y_{\epsilon}^{i}}\chi_{2}^{\epsilon}(x)\left|p_{\epsilon}(x,\xi_{\epsilon}^{i})\right|^{p_{2}}dx\right]\\
	&\quad \leq  C\sum_{i\in\textbf{I}_{\epsilon}}\left(1+\left|\xi_{\epsilon}^{i}\right|^{p_{1}}\theta_{1}+\left|\xi_{\epsilon}^{i}\right|^{p_{2}}\theta_{2}\right)\left|Y_{\epsilon}^{i}\right|\\
	&\quad = C\sum_{i\in\textbf{I}_{\epsilon}}\left(\left|Y_{\epsilon}^{i}\right|+\left|\xi_{\epsilon}^{i}\right|^{p_{1}}\theta_{1}\left|Y_{\epsilon}^{i}\right|+\left|\xi_{\epsilon}^{i}\right|^{p_{2}}\theta_{2}\left|Y_{\epsilon}^{i}\right|\right)\\
	&\quad \leq C\left(\left|\Omega\right|+\left\|\nabla u\right\|^{p_{1}}_{\textbf{L}^{p_{1}}(\Omega)}+\left\|\nabla u\right\|^{p_{2}}_{\textbf{L}^{p_{2}}(\Omega)}\right)<\infty,
	\end{align*}
where the last three inequalities follow from Lemma~\ref{lemma1}, Jensen's inequality, and Theorem~\ref{regularity}.
\end{proof}

\begin{lemma}
\label{proofaprioribound}
Let $u_{\epsilon}$ be the solution to (\ref{Dirichlet}).  Then (\ref{aprioribound}) holds.
\end{lemma}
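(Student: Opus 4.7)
The plan is the standard energy estimate, testing the weak form of (\ref{Dirichlet}) against $u_\epsilon$ itself. Since $u_\epsilon\in W_0^{1,p_1}(\Omega)$ is admissible, we obtain
\begin{equation*}
\int_\Omega \bigl(A_\epsilon(x,\nabla u_\epsilon),\nabla u_\epsilon\bigr)\,dx=\langle f,u_\epsilon\rangle.
\end{equation*}
For the left-hand side, I would apply the monotonicity inequality (\ref{MonA}) with $\xi_1=\nabla u_\epsilon$ and $\xi_2=0$, using property~(2) that $A(y,0)=0$, to obtain the lower bound
\begin{equation*}
\int_\Omega \bigl(A_\epsilon(x,\nabla u_\epsilon),\nabla u_\epsilon\bigr)\,dx\geq C_2\,J_\epsilon,
\end{equation*}
where I write
\begin{equation*}
J_\epsilon:=\int_\Omega\chi_1^\epsilon(x)|\nabla u_\epsilon|^{p_1}\,dx+\int_\Omega\chi_2^\epsilon(x)|\nabla u_\epsilon|^{p_2}\,dx.
\end{equation*}

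For the right-hand side, the duality bound $|\langle f,u_\epsilon\rangle|\leq\|f\|_{W^{-1,q_2}(\Omega)}\|u_\epsilon\|_{W^{1,p_1}_0(\Omega)}$ combined with the Poincar\'e inequality reduces matters to controlling $\|\nabla u_\epsilon\|_{L^{p_1}(\Omega)}$ by $J_\epsilon$. Split
\begin{equation*}
\int_\Omega|\nabla u_\epsilon|^{p_1}\,dx=\int_\Omega\chi_1^\epsilon|\nabla u_\epsilon|^{p_1}\,dx+\int_\Omega\chi_2^\epsilon|\nabla u_\epsilon|^{p_1}\,dx.
\end{equation*}
The first term is at most $J_\epsilon$. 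For the second term, since $p_2\geq p_1$, H\"older's inequality gives
\begin{equation*}
\int_\Omega\chi_2^\epsilon|\nabla u_\epsilon|^{p_1}\,dx\leq|\Omega|^{1-p_1/p_2}\Bigl(\int_\Omega\chi_2^\epsilon|\nabla u_\epsilon|^{p_2}\,dx\Bigr)^{p_1/p_2}\leq C\,J_\epsilon^{p_1/p_2}.
\end{equation*}

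Combining the two sides of the tested identity yields an inequality of the form
\begin{equation*}
C_2\,J_\epsilon\leq C\,\|f\|_{W^{-1,q_2}(\Omega)}\bigl(J_\epsilon^{1/p_1}+J_\epsilon^{1/p_2}\bigr).
\end{equation*}
Because $2\leq p_1\leq p_2$ forces both exponents $1/p_1$ and $1/p_2$ to be strictly less than one, Young's inequality lets me absorb the $J_\epsilon$ factors on the right into the left, producing a uniform bound $J_\epsilon\leq C$ independent of $\epsilon$, which is exactly (\ref{aprioribound}). The only mild obstacle is this last absorption step, where one must be careful to treat the two different powers simultaneously; this is handled by applying Young's inequality twice with conjugate pairs adapted to $p_1$ and $p_2$ respectively and collecting the lower-order constants on the right.
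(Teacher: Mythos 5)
Your proof is correct and follows essentially the same route as the paper: test the weak form against $u_\epsilon$, bound the energy from below, estimate $\langle f,u_\epsilon\rangle$ by duality and Poincar\'e, split the $L^{p_1}$ gradient norm across the two phases and use H\"older on the $\chi_2^\epsilon$ piece, then absorb via Young's inequality. The only cosmetic difference is that the paper uses the explicit identity $(A(y,\xi),\xi)=\sigma(y)|\xi|^{p(y)}$ to get equality where you invoke monotonicity (\ref{MonA}) with $A(y,0)=0$ for a one-sided bound, and the paper keeps the two phase integrals separate where you collect them into $J_\epsilon$; both choices lead to the same absorption argument.
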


\begin{proof}
Evaluating $u_{\epsilon}$ in the weak formulation for (\ref{Dirichlet}), applying H\"older's inequality, and 
since $f\in W^{-1,q_{2}}(\Omega)$, we obtain 
\begin{align}
	\label{aprioriboundproof1}
	\displaystyle	
	&\int_{\Omega}(A_{\epsilon}(x,\nabla u_{\epsilon}),\nabla u_{\epsilon})dx=\sigma_{1}\int_{\Omega}\chi_{1}^{\epsilon}(x)\left|\nabla u_{\epsilon}\right|^{p_{1}}dx+\sigma_{2}\int_{\Omega}\chi_{2}^{\epsilon}(x)\left|\nabla u_{\epsilon}\right|^{p_{2}}dx\\
	&\quad =\left\langle f, u_{\epsilon}\right\rangle\leq C\left[\left(\int_{\Omega}\chi_{1}^{\epsilon}(x)\left|\nabla u_{\epsilon}\right|^{p_{1}}dx\right)^{\frac{1}{p_{1}}}+\left(\int_{\Omega}\chi_{2}^{\epsilon}(x)\left|\nabla u_{\epsilon}\right|^{p_{2}}dx\right)^{\frac{1}{p_{2}}}\right]\notag
\end{align}

Applying Young's inequality to the last term in (\ref{aprioriboundproof1}), we obtain	
\begin{align}
	\label{aprioriboundproof2}
	\displaystyle
	& \sigma_{1}\int_{\Omega}\chi_{1}^{\epsilon}(x)\left|\nabla u_{\epsilon}\right|^{p_{1}}dx+\sigma_{2}\int_{\Omega}\chi_{2}^{\epsilon}(x)\left|\nabla u_{\epsilon}\right|^{p_{2}}dx\\
	& \quad \leq C\left[\frac{\delta^{p_{1}}}{p_{1}}\int_{\Omega}\chi_{1}^{\epsilon}(x)\left|\nabla u_{\epsilon}\right|^{p_{1}}dx+\frac{\delta^{-q_{2}}}{q_{2}}+\frac{\delta^{p_{2}}}{p_{2}}\int_{\Omega}\chi_{2}^{\epsilon}(x)\left|\nabla u_{\epsilon}\right|^{p_{2}}dx+\frac{\delta^{-q_{1}}}{q_{1}}\right]\notag
\end{align}	

By rearranging the terms in (\ref{aprioriboundproof2}), one gets
\begin{align*}
	\displaystyle
	&\left(\sigma_{1}-C\frac{\delta^{p_{1}}}{p_{1}}\right)\int_{\Omega}\chi_{1}^{\epsilon}(x)\left|\nabla u_{\epsilon}\right|^{p_{1}}dx+\left(\sigma_{2}-C\frac{\delta^{p_{2}}}{p_{2}}\right)\int_{\Omega}\chi_{2}^{\epsilon}(x)\left|\nabla u_{\epsilon}\right|^{p_{2}}dx\\
	&\quad\leq \frac{\delta^{-q_{2}}}{q_{2}}+\frac{\delta^{-q_{1}}}{q_{1}}.
\end{align*}

Therefore, by choosing $\delta$ small enough so that $\min\left\{\sigma_{1}-C\frac{\delta^{p_{1}}}{p_{1}},\sigma_{2}-C\frac{\delta^{p_{2}}}{p_{2}}\right\}$ is positive, one obtains $$\int_{\Omega}\chi_{1}^{\epsilon}(x)\left|\nabla u_{\epsilon}(x)\right|^{p_{1}}dx+\int_{\Omega}\chi_{2}^{\epsilon}(x)\left|\nabla u_{\epsilon}(x)\right|^{p_{2}}dx\leq C.$$
\end{proof}

\begin{lemma}
\label{unifboundDP}
For all $j=0,...,m$, we have that $\displaystyle \int_{\Omega_{j}}\left|\left(A_{\epsilon}\left(x,p_{\epsilon}\left(x,\eta_{j}\right)\right),\nabla u_{\epsilon}(x)\right)\right|dx$ and $\displaystyle \int_{\Omega_{j}}\left|\left(A_{\epsilon}\left(x,\nabla u_{\epsilon}(x)\right),p_{\epsilon}\left(x,\eta_{j}\right)\right)\right|dx$ are uniformly bounded with respect to $\epsilon$.
\end{lemma}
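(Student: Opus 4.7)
The plan is to split each integrand according to the two-phase decomposition of $A_\epsilon$ and then use H\"older's inequality to decouple the factor involving $\nabla u_\epsilon$ (which is controlled by the a priori bound (\ref{aprioribound})) from the factor involving $p_\epsilon(\cdot,\eta_j)$ (which is controlled by the cell-scale estimate (\ref{Lemma 1 epsilon}) of Lemma~\ref{lemma1}).

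First I would write, from the explicit form (\ref{A}) of $A$,
\begin{align*}
|(A_\epsilon(x,p_\epsilon(x,\eta_j)),\nabla u_\epsilon(x))|
&\le \sigma_1\chi_1^\epsilon(x)|p_\epsilon(x,\eta_j)|^{p_1-1}|\nabla u_\epsilon(x)|\\
&\quad +\sigma_2\chi_2^\epsilon(x)|p_\epsilon(x,\eta_j)|^{p_2-1}|\nabla u_\epsilon(x)|,
\end{align*}
and similarly for $|(A_\epsilon(x,\nabla u_\epsilon),p_\epsilon(x,\eta_j))|$ with the roles of $\nabla u_\epsilon$ and $p_\epsilon(\cdot,\eta_j)$ swapped in the two factors. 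Then I would apply H\"older's inequality on $\Omega_j$ with exponents $(q_2,p_1)$ in the $\chi_1^\epsilon$-term and $(q_1,p_2)$ in the $\chi_2^\epsilon$-term, using that $q_2(p_1-1)=p_1$ and $q_1(p_2-1)=p_2$, to bound both integrals by a sum of products of the form
\[
\Bigl(\int_{\Omega_j}\chi_i^\epsilon(x)|\nabla u_\epsilon(x)|^{p_i}dx\Bigr)^{\alpha_i}\Bigl(\int_{\Omega_j}\chi_i^\epsilon(x)|p_\epsilon(x,\eta_j)|^{p_i}dx\Bigr)^{\beta_i},
\]
with $\alpha_i+\beta_i=1$ and $\alpha_i,\beta_i\in\{1/p_i,1/q_{3-i}\}$.

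The first factor is uniformly bounded by Lemma~\ref{proofaprioribound} (i.e., the a priori bound (\ref{aprioribound})), since $\Omega_j\subset\Omega$. For the second factor I would exploit that $x\mapsto\chi_i^\epsilon(x)|p_\epsilon(x,\eta_j)|^{p_i}$ is $\epsilon$-periodic: since $\Omega_j\subset\subset\Omega$, for $\epsilon$ sufficiently small the cover $\bigcup_{i\in I_\epsilon^j\cup J_\epsilon^j}\overline{Y_\epsilon^i}$ of $\Omega_j$ lies inside $\Omega$, so summing (\ref{Lemma 1 epsilon}) over this finite collection of cells yields
\[
\int_{\Omega_j}\chi_i^\epsilon(x)|p_\epsilon(x,\eta_j)|^{p_i}dx\le C\bigl(1+|\eta_j|^{p_1}\theta_1+|\eta_j|^{p_2}\theta_2\bigr)\bigl(|\Omega_j|+|F_\epsilon^j|\bigr),
\]
and the right-hand side is bounded independently of $\epsilon$ since $|F_\epsilon^j|\to 0$. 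Summing finitely many such bounded terms gives the claim for both integrals.

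I do not anticipate any substantive obstacle; the proof is a routine combination of (\ref{aprioribound}) and (\ref{Lemma 1 epsilon}) glued together by H\"older's inequality. The only mild care needed is in bookkeeping the exponents $q_2(p_1-1)=p_1$, $q_1(p_2-1)=p_2$ so that the $\chi_1^\epsilon$ and $\chi_2^\epsilon$ contributions split cleanly, and in noting that $\eta_j$ is a fixed (finite) constant so the cell-estimate prefactor in (\ref{Lemma 1 epsilon}) is itself finite.
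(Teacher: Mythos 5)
Your proposal is correct and takes essentially the same route as the paper's proof: Cauchy--Schwarz, a phase-wise split of $|A_\epsilon|$, H\"older with exponents $(q_2,p_1)$ on the $\chi_1^\epsilon$ part and $(q_1,p_2)$ on the $\chi_2^\epsilon$ part, then the a priori bound~(\ref{aprioribound}) for the $\nabla u_\epsilon$ factor and the cell estimate~(\ref{Lemma 1 epsilon}) for the $p_\epsilon(\cdot,\eta_j)$ factor. The only difference is that you spell out the covering argument bounding $\int_{\Omega_j}\chi_i^\epsilon|p_\epsilon(\cdot,\eta_j)|^{p_i}\,dx$ uniformly in $\epsilon$, a step the paper leaves tacit.
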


\begin{proof}
Using H\"older's inequality, (\ref{ConA}), and (\ref{aprioribound}), we obtain
\begin{align*}	
	\displaystyle 
	&\int_{\Omega_{j}}\left|\left(A_{\epsilon}\left(x,p_{\epsilon}\left(x,\eta_{j}\right)\right),\nabla u_{\epsilon}(x)\right)\right|dx\leq \int_{\Omega_{j}}\left|A_{\epsilon}\left(x,p_{\epsilon}\left(x,\eta_{j}\right)\right)\right|\left|\nabla u_{\epsilon}(x)\right|dx\\
	&\quad\leq C\left[\left(\int_{\Omega_{j}}\chi_{1}^{\epsilon}(x)\left(1+\left|p_{\epsilon}\left(x,\eta_{j}\right)\right|\right)^{p_{1}}dx\right)^{\frac{1}{q_{2}}}+\left(\int_{\Omega_{j}}\chi_{2}^{\epsilon}(x)\left(1+\left|p_{\epsilon}\left(x,\eta_{j}\right)\right|\right)^{p_{2}}dx\right)^{\frac{1}{q_{1}}}\right]\\
	&\quad\leq C \text{, where $C$ does not depend on $\epsilon$}.	
\end{align*}
The proof of the uniform boundedness of $\displaystyle \int_{\Omega_{j}}\left|\left(A_{\epsilon}\left(x,\nabla u_{\epsilon}(x)\right),p_{\epsilon}\left(x,\eta_{j}\right)\right)\right|dx$ follows in the same manner.
\end{proof}

\begin{lemma}
\label{dunfordpettis}
	As $\epsilon\rightarrow0$, up to a subsequence, $\left(A_{\epsilon}\left(\cdot,p_{\epsilon}\left(\cdot,\eta_{j}\right)\right),\nabla u_{\epsilon}(\cdot)\right)$ converges weakly to a function $g_{j}\in L^{1}(\Omega_{j};\mathbb{R})$, for all $j=0,...,m$.  
In a similar way, up to a subsequence, $\left(A_{\epsilon}\left(\cdot,\nabla u_{\epsilon}(\cdot)\right),p_{\epsilon}\left(\cdot,\eta_{j}\right)\right)$ converges weakly to a function $h_{j}\in L^{1}(\Omega_{j};\mathbb{R})$, for all $j=0,...,m$.
\end{lemma}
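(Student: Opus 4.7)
The plan is to apply the Dunford--Pettis theorem to each sequence. Lemma~\ref{unifboundDP} already supplies the required uniform $L^{1}(\Omega_j)$-boundedness, so the only thing left is to verify equi-integrability.

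For the sequence $\bigl(A_{\epsilon}(\cdot, p_{\epsilon}(\cdot, \eta_j)), \nabla u_{\epsilon}(\cdot)\bigr)$, I would split the integrand by phase via $\chi_{i}^{\epsilon}$ and apply H\"older's inequality with the dual pair $(p_i/(p_i-1),p_i)$ on phase $i$. This produces, for any measurable $E\subset \Omega_j$, an estimate of the form
\begin{equation*}
\int_{E}\chi_{i}^{\epsilon}\bigl|\bigl(A_{\epsilon}(\cdot,p_{\epsilon}(\cdot,\eta_j)),\nabla u_{\epsilon}\bigr)\bigr|\,dx \leq \Bigl(\int_{E}\chi_{i}^{\epsilon}|A_{\epsilon}(\cdot,p_{\epsilon}(\cdot,\eta_j))|^{p_i/(p_i-1)}\,dx\Bigr)^{(p_i-1)/p_i}\Bigl(\int_{E}\chi_{i}^{\epsilon}|\nabla u_{\epsilon}|^{p_i}\,dx\Bigr)^{1/p_i}.
\end{equation*}
By (\ref{ConA}) and Lemma~\ref{lemma1}, the first factor is controlled by the periodic rescaling of a fixed function $\phi(y):=\chi_i(y)(1+|p(y,\eta_j)|)^{p_i}$ lying in $L^{1}(Y)$. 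Since $\phi(\cdot/\epsilon)$ converges weakly in $L^{1}(\Omega)$ to $\int_{Y}\phi$ by the classical homogenization theorem for periodic $L^{1}$-rescalings, the family $\{\phi(\cdot/\epsilon)\}_{\epsilon>0}$ is automatically equi-integrable. Combined with the uniform $L^{1}$-bound on $\chi_{i}^{\epsilon}|\nabla u_{\epsilon}|^{p_i}$ from (\ref{aprioribound}), the H\"older estimate above tends to zero as $|E|\to 0$, uniformly in $\epsilon$.

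The reverse sequence $\bigl(A_{\epsilon}(\cdot,\nabla u_{\epsilon}),p_{\epsilon}(\cdot,\eta_j)\bigr)$ would be handled symmetrically with the roles of the two factors swapped: $\chi_{i}^{\epsilon}|p_{\epsilon}(\cdot,\eta_j)|^{p_i}$ is a periodic rescaling of $\chi_i|p(\cdot,\eta_j)|^{p_i}\in L^{1}(Y)$ and therefore equi-integrable, while (\ref{ConA}) together with (\ref{aprioribound}) controls $\chi_{i}^{\epsilon}|A_{\epsilon}(\cdot,\nabla u_{\epsilon})|^{p_i/(p_i-1)}$ uniformly in $L^{1}$. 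The same H\"older argument then yields equi-integrability of this product, and the Dunford--Pettis theorem furnishes subsequential weak $L^{1}(\Omega_j)$-limits $g_j$ and $h_j$.

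The main obstacle is that the dual-exponent H\"older splitting on its own only recovers the $L^{1}$-boundedness already supplied by Lemma~\ref{unifboundDP}. The extra ingredient needed to upgrade $L^{1}$-boundedness to equi-integrability is the observation that one factor in each product is a periodic $L^{1}$-rescaling and therefore inherits weak $L^{1}$-convergence to its cell average, from which the Dunford--Pettis criterion supplies the uniform integrability that transfers through H\"older to the full product.
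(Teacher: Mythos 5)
Your argument is correct and follows essentially the same route as the paper: verify Dunford--Pettis via the $L^1$-bound of Lemma~\ref{unifboundDP}, then obtain equi-integrability by phase-wise H\"older with the dual pair $(q_{3-i},p_i)$, using (\ref{aprioribound}) to bound the $\nabla u_\epsilon$ factor uniformly and the equi-integrability of periodic $L^1(Y)$-rescalings for the other factor. The only cosmetic difference is that the paper invokes Theorem~1.5 of \cite{Dacorogna1989} directly for the equi-integrability of $\chi_i^\epsilon|A_\epsilon(\cdot,p_\epsilon(\cdot,\eta_j))|^{q_{3-i}}$, whereas you reach the same fact via weak $L^1$-convergence of the periodic rescaling.
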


\begin{proof}
We prove the first statement of the lemma, the second statement follows in a similar way.  The lemma follows from 
the Dunford-Pettis theorem (see \cite{Dacorogna1989}).  To apply this theorem we establish the following conditions:
\begin{enumerate}
	\item $\displaystyle \int_{\Omega_{j}}\left|\left(A_{\epsilon}\left(x,p_{\epsilon}\left(x,\eta_{j}\right)\right),\nabla u_{\epsilon}(x)\right)\right|dx$ 
	is uniformly bounded with respect to $\epsilon$
	\item For all $j=0,...,m$, $\left(A_{\epsilon}\left(\cdot,p_{\epsilon}\left(\cdot,\eta_{j}\right)\right),\nabla u_{\epsilon}(\cdot)\right)$ 
	is equiintegrable.
\end{enumerate}
	
The first condition is proved in Lemma~\ref{unifboundDP}.  For the second condition, we have that $\displaystyle \chi_{1}^{\epsilon}(\cdot)\left|A_{\epsilon}\left(\cdot,p_{\epsilon}(\cdot,\eta_{j})\right)\right|^{q_{2}}$ and 
$\displaystyle \chi_{2}^{\epsilon}(\cdot)\left|A_{\epsilon}\left(\cdot,p_{\epsilon}(\cdot,\eta_{j})\right)\right|^{q_{1}}$ 
are equiintegrable (see for example \textit{Theorem~1.5} of \cite{Dacorogna1989}).

By (\ref{aprioribound}), for any $E\subset\Omega$, we have $$\displaystyle \max_{i=1,2}\left\{\sup_{\epsilon>0}\left\{\left(\int_{E}\chi_{i}^{\epsilon}(x)\left|\nabla u_{\epsilon}(x)\right|^{p_{i}}dx\right)^{\frac{1}{p_{i}}}\right\}\right\}\leq C.$$
	
Let $\alpha>0$ arbitrary and choose $\alpha_{1}>0$ and $\alpha_{2}>0$ such that $\alpha_{1}^{1/q_{2}}+\alpha_{2}^{1/q_{1}}<\alpha/C.$
	
For $\alpha_{1}$ and $\alpha_{2}$, there exist $\lambda(\alpha_{1})>0$ and $\lambda(\alpha_{2})>0$ such that for 
every $E\subset\Omega$ with $\left|E\right|<\min\left\{\lambda(\alpha_{1}),\lambda(\alpha_{2})\right\}$, $$\int_{E}\chi_{1}^{\epsilon}(x)\left|A_{\epsilon}\left(x,p_{\epsilon}\left(x,\eta_{j}\right)\right)\right|^{q_{2}}dx<\alpha_{1} \text{, and   $\int_{E}\chi_{2}^{\epsilon}(x)\left|A_{\epsilon}\left(x,p_{\epsilon}\left(x,\eta_{j}\right)\right)\right|^{q_{1}}dx<\alpha_{2}$.}$$
	
Take $\lambda=\lambda(\alpha)=\min\left\{\lambda(\alpha_{1}),\lambda(\alpha_{2})\right\}$.  Then, 
for all $E\subset\Omega$ with $\left|E\right|<\lambda(\alpha)$, we have 
\begin{align*}
	\displaystyle 
	&\int_{E}\left|\left(A_{\epsilon}\left(x,p_{\epsilon}\left(x,\eta_{j}\right)\right),\nabla u_{\epsilon}(x)\right)\right|dx\leq \int_{E}\left|A_{\epsilon}\left(x,p_{\epsilon}\left(x,\eta_{j}\right)\right)\right|\left|\nabla u_{\epsilon}(x)\right|dx\\
	&\quad\leq \left(\int_{E}\chi_{1}^{\epsilon}(x)\left|A_{\epsilon}\left(x,p_{\epsilon}\left(x,\eta_{j}\right)\right)\right|^{q_{2}}dx\right)^{\frac{1}{q_{2}}}\left(\int_{E}\chi_{1}^{\epsilon}(x)\left|\nabla u_{\epsilon}(x)\right|^{p_{1}}dx\right)^{\frac{1}{p_{1}}}\\
	&\qquad+ \left(\int_{E}\chi_{2}^{\epsilon}(x)\left|A_{\epsilon}\left(x,p_{\epsilon}\left(x,\eta_{j}\right)\right)\right|^{q_{1}}dx\right)^{\frac{1}{q_{1}}}\left(\int_{E}\chi_{2}^{\epsilon}(x)\left|\nabla u_{\epsilon}(x)\right|^{p_{2}}dx\right)^{\frac{1}{p_{2}}}\\
	&\quad\leq C(\alpha_{1}^{1/q_{2}}+\alpha_{2}^{1/q_{1}}) < \alpha,
\end{align*}
for every $\alpha>0$, and so $\left(A_{\epsilon}\left(\cdot,p_{\epsilon}\left(\cdot,\eta_{j}\right)\right),\nabla u_{\epsilon}(\cdot)\right)$ is equiintegrable. 
\end{proof}

\section{Proof of Main Results}

\subsection{Proof of the Corrector Theorem}	
\label{proof corrector}

We are now in the position to give the proof of Theorem~\ref{corrector}.
\begin{proof}
Let $u_{\epsilon}\in W_{0}^{1,p_{1}}(\Omega)$ the solutions of (\ref{Dirichlet}).  By (\ref{MonA}), we have that
\begin{align*}
	\displaystyle	
	& 
\int_{\Omega}\left[\chi_{1}^{\epsilon}(x)\left|p_{\epsilon}\left(x,M_{\epsilon}\nabla u(x)\right)-\nabla u_{\epsilon}(x)\right|^{p_{1}} + \chi_{2}^{\epsilon}(x)\left|p_{\epsilon}\left(x,M_{\epsilon}\nabla u(x)\right)-\nabla u_{\epsilon}(x)\right|^{p_{2}}\right]dx\\ 
	& \quad\leq
C\int_{\Omega}\left(A_{\epsilon}\left(x,p_{\epsilon}\left(x,M_{\epsilon} \nabla u(x)\right)\right)-A_{\epsilon}\left(x,\nabla u_{\epsilon}(x)\right),p_{\epsilon}\left(x,M_{\epsilon}\nabla u(x)\right)-\nabla u_{\epsilon}(x)\right)dx
\end{align*}

To prove Theorem~\ref{corrector}, we show that 
\begin{align*}
	\displaystyle
	& \int_{\Omega}\left(A_{\epsilon}\left(x,p_{\epsilon}\left(x,M_{\epsilon}\nabla u(x)\right)\right)-A_{\epsilon}\left(x,\nabla u_{\epsilon}(x)\right),p_{\epsilon}\left(x,M_{\epsilon}\nabla u(x)\right)-\nabla u_{\epsilon}(x)\right)dx\\
	&=\int_{\Omega}\left(A_{\epsilon}\left(x,p_{\epsilon}\left(x,M_{\epsilon}\nabla u\right)\right),p_{\epsilon}\left(x,M_{\epsilon}\nabla u\right)\right)dx-\int_{\Omega}\left(A_{\epsilon}\left(x,p_{\epsilon}\left(x,M_{\epsilon}\nabla u\right)\right),\nabla u_{\epsilon}\right)dx\\
	&\quad-\int_{\Omega}\left(A_{\epsilon}\left(x,\nabla u_{\epsilon}\right),p_{\epsilon}\left(x,M_{\epsilon}\nabla u\right)\right)dx+\int_{\Omega}\left(A_{\epsilon}\left(x,\nabla u_{\epsilon}\right),\nabla u_{\epsilon}\right)dx
\end{align*}
goes to 0, as $\epsilon\rightarrow0$.  This is done in four steps.
	
In what follows, we use the following notation $$\displaystyle \xi_{\epsilon}^{i}=\frac{1}{\left|Y_{\epsilon}^{i}\right|}\int_{Y_{\epsilon}^{i}}\nabla u dx.$$

\textbf{STEP~1}
\vspace{1mm}

Let us prove that
\begin{equation}
	\label{First}
	\displaystyle
	\int_{\Omega}\left(A_{\epsilon}\left(x,p_{\epsilon}\left(x,M_{\epsilon}\nabla u\right)\right),p_{\epsilon}\left(x,M_{\epsilon}\nabla u\right)\right)dx \rightarrow \int_{\Omega}\left(b(\nabla u),\nabla u\right)dx
\end{equation}
as $\epsilon\rightarrow0$.

\begin{proof}
From (\ref{inner product a with p}) and (\ref{approximation}), we obtain
\begin{align*}
	\displaystyle
	& \int_{\Omega}\left(A_{\epsilon}\left(x,p_{\epsilon}\left(x,M_{\epsilon}\nabla u(x)\right)\right),p_{\epsilon}\left(x,M_{\epsilon}\nabla u(x)\right)\right)dx\\
	&\quad=\int_{\Omega_{\epsilon}}\left(A_{\epsilon}\left(x,p_{\epsilon}\left(x,M_{\epsilon}\nabla u(x)\right)\right),p_{\epsilon}\left(x,M_{\epsilon}\nabla u(x)\right)\right)dx\\
	&\quad= \sum_{i\in I_{\epsilon}}\int_{Y_{\epsilon}^{i}}\left(A\left(\frac{x}{\epsilon},p\left(\frac{x}{\epsilon},\xi_{\epsilon}^{i}\right)\right),p\left(\frac{x}{\epsilon},\xi_{\epsilon}^{i}\right)\right)dx\\
	&\quad=\epsilon^{n}\sum_{i\in I_{\epsilon}}\int_{Y}\left(A\left(y,p\left(y,\xi_{\epsilon}^{i}\right)\right),p\left(y,\xi_{\epsilon}^{i}\right)\right)dy\\
	&\quad= \sum_{i\in I_{\epsilon}}\int_{\Omega}\chi_{Y_{\epsilon}^{i}}(x)\left(b(\xi_{\epsilon}^{i}),\xi_{\epsilon}^{i}\right)dx=\int_{\Omega}\left(b(M_{\epsilon}\nabla u(x)),M_{\epsilon}\nabla u(x)\right)dx.		
\end{align*}

By (\ref{Conb}), the definition of $q_{1}$, and H\"older's inequality we have 
\begin{align*}
	\displaystyle
	&\int_{\Omega}\left|b(M_{\epsilon}\nabla u(x))-b(\nabla u(x))\right|^{q_{1}}dx\\
	&\quad\leq C\left[\left(\int_{\Omega}\left|M_{\epsilon}\nabla u(s)-\nabla u(s)\right|^{p_{2}}dx\right)^{\frac{1}{(p_{2}-1)^{2}}}\right.\\
	&\qquad+\left.\left(\int_{\Omega}\left|M_{\epsilon}\nabla u(x)-\nabla u(x)\right|^{p_{2}}dx\right)^{\frac{1}{(p_{2}-1)(p_{1}-1)}}\right]	
\end{align*}	

From Property~1 of $M_{\epsilon}$, we obtain that
\begin{equation}
	\label{step1-1}
	\displaystyle b(M_{\epsilon}\nabla u)\rightarrow b(\nabla u)  \text{  in $L^{q_{1}}(\Omega;\mathbb{R}^{n})$}\text{, as $\epsilon\rightarrow0$}.
\end{equation}	

Now, (\ref{First}) follows from (\ref{step1-1}) since $M_{\epsilon}\nabla u\rightarrow \nabla u$ in $L^{p_2}(\Omega;\mathbb{R}^n)$, so
\begin{align*}
	\displaystyle
	\int_{\Omega}\left(A_{\epsilon}\left(x,p_{\epsilon}\left(x,M_{\epsilon}\nabla u(x)\right)\right),p_{\epsilon}\left(x,M_{\epsilon}\nabla u(x)\right)\right)dx
	& = \int_{\Omega}\left(b(M_{\epsilon}\nabla u(x),M_{\epsilon}\nabla u(x)\right)dx\\
	& \rightarrow \int_{\Omega}\left(b(\nabla u(x)),\nabla u(x)\right)dx,
\end{align*}	
as $\epsilon\rightarrow0$.
\end{proof}

\textbf{STEP~2}
\vspace{1mm}

We now show that
\begin{equation}
	\label{Second}
	\displaystyle \int_{\Omega}\left(A_{\epsilon}\left(x,p_{\epsilon}\left(x,M_{\epsilon}\nabla u(x)\right)\right),\nabla u_{\epsilon}(x)\right)dx
	\rightarrow \int_{\Omega}\left(b(\nabla u(x)),\nabla u(x)\right)dx 
\end{equation}
as $\epsilon\rightarrow0$.

\begin{proof}
Let $\delta>0$.  From Theorem~\ref{regularity} we have $\nabla u\in L^{p_{2}}(\Omega;\mathbb{R}^{n})$ and there exists 
a simple function $\Psi$ satisfying the assumptions of Lemma~\ref{lemma3} such that 
\begin{equation}
	\label{approximation with simple function of Du}
	\displaystyle
	\left\|\nabla u-\Psi\right\|_{L^{p_{2}}(\Omega;\mathbb{R}^{n})}\leq\delta.
\end{equation}

Let us write
\begin{align*}
	\displaystyle &\int_{\Omega}\left(A_{\epsilon}\left(x,p_{\epsilon}\left(x,M_{\epsilon}\nabla u(x)\right)\right),\nabla u_{\epsilon}(x)\right)dx \\&= \int_{\Omega}\left(A_{\epsilon}\left(x,p_{\epsilon}\left(x,\Psi\right)\right),\nabla u_{\epsilon}\right)dx 
	+ \int_{\Omega}\left(A_{\epsilon}\left(x,p_{\epsilon}\left(x,M_{\epsilon}\nabla u\right)\right)-A_{\epsilon}\left(x,p_{\epsilon}\left(x,\Psi \right)\right),\nabla u_{\epsilon}\right)dx.
\end{align*}

We first show that $$\int_{\Omega}\left(A_{\epsilon}\left(x,p_{\epsilon}\left(x,\Psi(x)\right)\right),\nabla u_{\epsilon}(x)\right)dx\rightarrow\int_{\Omega}\left(b(\Psi(x)),\nabla u(x)\right)dx \text{ as $\epsilon\rightarrow0$}.$$  
We have $$\int_{\Omega}\left(A_{\epsilon}\left(x,p_{\epsilon}\left(x,\Psi(x)\right)\right),\nabla u_{\epsilon}(x)\right)dx=\sum_{j=0}^{m}\int_{\Omega_{j}}\left(A_{\epsilon}\left(x,p_{\epsilon}\left(x,\eta_{j}\right)\right),\nabla u_{\epsilon}(x)\right)dx.$$
	
Now from (\ref{p5}), we have that $\displaystyle A_{\epsilon}\left(\cdot,p_{\epsilon}\left(\cdot,\eta_{j}\right)\right)\rightharpoonup b(\eta_{j})\in L^{q_{2}}(\Omega_{j};\mathbb{R}^{n}),$ and by (\ref{div with p}), $\displaystyle \int_{\Omega_{j}}\left(A_{\epsilon}\left(x,p_{\epsilon}\left(x,\eta_{j}\right)\right),\nabla\varphi(x)\right)dx=0,$ for $\varphi\in W_{0}^{1,p_{1}}(\Omega_{j})$.

Take $\varphi=\delta u_{\epsilon}$, with $\delta\in C_{0}^{\infty}(\Omega_{j})$ to get $$0=\int_{\Omega_{j}}\left(A_{\epsilon}\left(x,p_{\epsilon}\left(x,\eta_{j}\right)\right),(\nabla\delta)u_{\epsilon}\right)dx + \int_{\Omega_{j}}\left(A_{\epsilon}\left(x,p_{\epsilon}\left(x,\eta_{j}\right)\right),(\nabla u_{\epsilon})\delta\right)dx.$$
	
Taking the limit as $\epsilon\rightarrow0$, and using the fact that $u^{\epsilon}\rightharpoonup u$ in $W_{0}^{1,p_{1}}(\Omega)$ 
and (\ref{p5}), we have by Lemma~\ref{dunfordpettis} that $$\int_{\Omega_{j}}g_{j}(x)\delta(x)dx =\lim_{\epsilon\rightarrow0}\int_{\Omega_{j}}\left(A_{\epsilon}\left(x,p_{\epsilon}\left(x,\eta_{j}\right)\right),(\nabla u_{\epsilon})\delta\right)dx=\int_{\Omega_{j}}\left(b(\eta_{j}),(\nabla u)\delta\right)dx$$

Therefore, we may conclude that $g_{j}=\left(b(\eta_{j}),\nabla u\right)$, so $$\sum_{j=0}^{n}\int_{\Omega_{j}}\left(A_{\epsilon}\left(x,p_{\epsilon}\left(x,\eta_{j}\right)\right),\nabla u_{\epsilon}(x)\right)dx\rightarrow\sum_{j=0}^{n}\int_{\Omega_{j}}\left(b(\eta_{j}),\nabla u(x)\right)dx\text{, as $\epsilon\rightarrow0$.}$$ 

Thus, we get $$\int_{\Omega}\left(A_{\epsilon}\left(x,p_{\epsilon}\left(x,\Psi(x)\right)\right),\nabla u_{\epsilon}(x)\right)dx\rightarrow\int_{\Omega}\left(b(\Psi(x)),\nabla u(x)\right)dx\text{, as $\epsilon\rightarrow0$.}$$ 

On the other hand, let us estimate $$\int_{\Omega}\left(A_{\epsilon}\left(x,p_{\epsilon}\left(x,M_{\epsilon}\nabla u(x)\right)\right)-A_{\epsilon}\left(x,p_{\epsilon}\left(x,\Psi(x)\right)\right),\nabla u_{\epsilon}(x)\right)dx.$$  
	
By (\ref{ConA}) and H\"older's inequality we obtain
\begin{align}
	\label{step2-1}
	\displaystyle 
	&\left|\int_{\Omega}\left(A_{\epsilon}\left(x,p_{\epsilon}\left(x,M_{\epsilon}\nabla u(x)\right)\right)-A_{\epsilon}\left(x,p_{\epsilon}\left(x,\Psi(x) \right)\right),\nabla u_{\epsilon}(x)\right)dx\right|\\
	&\quad\leq C\left(\int_{\Omega}\chi_{1}^{\epsilon}(x)\left|p_{\epsilon}\left(x,M_{\epsilon}\nabla u\right)-p_{\epsilon}\left(x,\Psi \right)\right|^{p_{1}}dx\right)^{\frac{1}{p_{1}}}\left(\int_{\Omega}\chi_{1}^{\epsilon}(x)\left|\nabla u_{\epsilon}\right|^{p_{1}}dx\right)^{\frac{1}{p_{1}}}\notag\\
	&\qquad\quad\times\left(\int_{\Omega}\chi_{1}^{\epsilon}(x)\left(1+\left|p_{\epsilon}\left(x,M_{\epsilon}\nabla u\right)\right|^{p_{1}}+\left|p_{\epsilon}\left(x,\Psi \right)\right|^{p_{1}}\right)dx\right)^{\frac{p_{1}-2}{p_{1}}}\notag\\
	&\qquad+C\left(\int_{\Omega}\chi_{2}^{\epsilon}(x)\left|p_{\epsilon}\left(x,M_{\epsilon}\nabla u\right)-p_{\epsilon}\left(x,\Psi \right)\right|^{p_{2}}dx\right)^{\frac{1}{p_{2}}}\left(\int_{\Omega}\chi_{2}^{\epsilon}(x)\left|\nabla u_{\epsilon}\right|^{p_{2}}dx\right)^{\frac{1}{p_{2}}}\notag\\
	&\qquad\quad\times \left(\int_{\Omega}\chi_{2}^{\epsilon}(x)\left(1+\left|p_{\epsilon}\left(x,M_{\epsilon}\nabla u\right)\right|^{p_{2}}+\left|p_{\epsilon}\left(x,\Psi \right)\right|^{p_{2}}\right)dx\right)^{\frac{p_{2}-2}{p_{2}}}\notag
\end{align}

Applying (\ref{aprioribound}), (\ref{uniform boundedness of p at M}), and Lemma~\ref{lemma1} to the right 
hand side of (\ref{step2-1}), we obtain
\begin{align}
	\label{step2-2}
	\displaystyle
	& \left|\int_{\Omega}\left(A_{\epsilon}\left(x,p_{\epsilon}\left(x,M_{\epsilon}\nabla u(x)\right)\right)-A_{\epsilon}\left(x,p_{\epsilon}\left(x,\Psi(x) \right)\right),\nabla u_{\epsilon}(x)\right)dx\right|\\
	&\quad\leq C\left[\left(\int_{\Omega}\chi_{1}^{\epsilon}(x)\left|p_{\epsilon}\left(x,M_{\epsilon}\nabla u(x)\right)-p_{\epsilon}\left(x,\Psi(x)\right)\right|^{p_{1}}dx\right)^{\frac{1}{p_{1}}}\right.\notag\\
	&\qquad\left.+\left(\int_{\Omega}\chi_{2}^{\epsilon}(x)\left|p_{\epsilon}\left(x,M_{\epsilon}\nabla u(x)\right)-p_{\epsilon}\left(x,\Psi(x) \right)\right|^{p_{2}}dx\right)^{\frac{1}{p_{2}}}\right]\notag
\end{align}
	
Applying Lemma~\ref{lemma3} and (\ref{approximation with simple function of Du}) to (\ref{step2-2}), we discover that	
\begin{align}
	\label{step2-3}
	\displaystyle & \limsup_{\epsilon\rightarrow0}\left|\int_{\Omega}\left(A_{\epsilon}\left(x,p_{\epsilon}\left(x,M_{\epsilon}\nabla u(x)\right)\right)-A_{\epsilon}\left(x,p_{\epsilon}\left(x,\Psi(x)\right)\right),\nabla u_{\epsilon}(x)\right)dx\right|\\
	&\quad\leq C\left[\left(\delta^{q_{1}}+\delta^{q_{2}}\right)^{\frac{1}{p_{1}}} + \left(\delta^{q_{1}}+\delta^{q_{2}}\right)^{\frac{1}{p_{2}}}\right],\notag
\end{align}	
where $C$ is independent of $\delta$.  Since $\delta$ is arbitrary we conclude that the limit on the left hand side of (\ref{step2-3}) is equal to $0$.

Finally, using the continuity of $b$ and H\"older's inequality we obtain $$\left|\int_{\Omega}\left(b(\nabla u(x))-b(\Psi(x)),\nabla u(x)\right)dx\right|\leq C\left[\delta^{\frac{1}{(p_{1}-1)(p_{2}-1)}}+\delta^{\frac{1}{(p_{2}-1)^{2}}}\right]^{\frac{1}{q_{1}}},$$
where $C$ does not depend on $\delta$. 

Step~2 is proved noticing that $\delta$ can be taken arbitrarily small.
\end{proof}

\textbf{STEP~3}
\vspace{1mm}
	
We will show that
\begin{equation}
\label{Third}
	\displaystyle\int_{\Omega}\left(A_{\epsilon}\left(x,\nabla u_{\epsilon}(x)\right),p_{\epsilon}\left(x,M_{\epsilon}\nabla u(x)\right)\right)dx
	 \rightarrow\int_{\Omega}\left(b(\nabla u(x)),\nabla u(x)\right)dx
\end{equation}
as $\epsilon\rightarrow0$.	

\begin{proof}
Let $\delta>0$.  As in the proof of Step~2, assume $\Psi$ is a simple function satisfying assumptions of 
Lemma~\ref{lemma3} and such that $\displaystyle \left\|\nabla u-\Psi\right\|_{L^{p_{2}}(\Omega;\mathbb{R}^{n})}<\delta$.
		
Let us write 
\begin{align*}
	\displaystyle
	&\int_{\Omega}\left(A_{\epsilon}\left(x,\nabla u_{\epsilon}(x)\right),p_{\epsilon}\left(x,M_{\epsilon}\nabla u(x)\right)\right)dx\\
	&\quad=\int_{\Omega}\left(A_{\epsilon}\left(x,\nabla u_{\epsilon}(x)\right),p_{\epsilon}\left(x,\Psi(x)\right)\right)dx\\
	&\qquad+\int_{\Omega}\left(A_{\epsilon}\left(x,\nabla u_{\epsilon}(x)\right),p_{\epsilon}\left(x,M_{\epsilon}\nabla u(x)\right)-p_{\epsilon}\left(x,\Psi(x)\right)\right)dx.	
\end{align*}
	
We first show that $$\int_{\Omega}\left(A_{\epsilon}\left(x,\nabla u_{\epsilon}(x)\right),p_{\epsilon}\left(x,\Psi(x)\right)\right)dx\rightarrow\int_{\Omega}\left(b\left(\nabla u(x)\right),\Psi(x)\right)dx.$$

We start by writing $$\int_{\Omega}\left(A_{\epsilon}\left(x,\nabla u_{\epsilon}(x)\right),p_{\epsilon}\left(x,\Psi(x)\right)\right)dx= \sum_{j=0}^{m}\int_{\Omega_{j}}\left(A_{\epsilon}\left(x,\nabla u_{\epsilon}(x)\right),p_{\epsilon}\left(x,\eta_{j}\right)\right)dx.$$

From Lemma~\ref{dunfordpettis}, up to a subsequence, $\left(A_{\epsilon}\left(\cdot,\nabla u_{\epsilon}\right),p_{\epsilon}\left(\cdot,\eta_{j}\right)\right)$ converges weakly to a function $h_{j}\in L^{1}(\Omega_{j};\mathbb{R})$, as $\epsilon\rightarrow0$.

By Theorem~\ref{homogenization}, we have $\displaystyle A_{\epsilon}\left(\cdot,\nabla u_{\epsilon}\right)\rightharpoonup b(\nabla u)\in L^{q_{2}}(\Omega;\mathbb{R}^{n})$ and $$\displaystyle -div\left(A_{\epsilon}\left(x,\nabla u_{\epsilon}\right)\right)=f=-div\left(b(\nabla u)\right).$$ 
	
From (\ref{p3}), $p_{\epsilon}$ satisfies $\displaystyle p_{\epsilon}(\cdot,\eta_{j})\rightharpoonup\eta_{j}$ in $L^{p_{1}}(\Omega_{j},\mathbb{R}^{n})$. 
	
Arguing as in Step~2, we find that $\displaystyle \left(A_{\epsilon}\left(x,\nabla u_{\epsilon}(x)\right),p_{\epsilon}\left(x,\eta_{j}\right)\right) \rightharpoonup \left(b(\nabla u(x)),\eta_{j}\right)$ in $D^{'}(\Omega_{j})$, as $\epsilon\rightarrow0$.

Therefore, we may conclude that $h_{j}=\left(b(\nabla u),\eta_{j}\right)$, and hence, $$\sum_{j=0}^{n}\int_{\Omega_{j}}\left(A_{\epsilon}\left(x,\nabla u_{\epsilon}(x)\right),p_{\epsilon}\left(x,\eta_{j}\right)\right)dx\rightarrow\sum_{j=0}^{n}\int_{\Omega_{j}}\left(b(\nabla u(x)),\eta_{j}\right)dx\text{, as $\epsilon\rightarrow0$.}$$ 

Thus, we get $$\displaystyle \int_{\Omega}\left(A_{\epsilon}\left(x,\nabla u_{\epsilon}(x)\right),p_{\epsilon}\left(x,\Psi(x)\right)\right)dx\rightarrow\int_{\Omega}\left(b(\nabla u(x)),\Psi(x)\right)dx\text{, as $\epsilon\rightarrow0.$}$$

Moreover, applying H\"older's inequality and (\ref{ConA}) we have 
\begin{align*}
	\displaystyle
	& \left|\int_{\Omega}\left(A_{\epsilon}\left(x,\nabla u_{\epsilon}(x)\right),p_{\epsilon}\left(x,M_{\epsilon}\nabla u(x)\right)-p_{\epsilon}\left(x,\Psi(x)\right)\right)dx\right|\\
	&\quad\leq C\left[\left(\int_{\Omega}\chi_{1}^{\epsilon}\left(1+\left|\nabla u_{\epsilon}\right|\right)^{p_{1}}\right)^{\frac{1}{q_{2}}}\left(\int_{\Omega}\chi_{1}^{\epsilon}\left|p_{\epsilon}\left(x,M_{\epsilon}\nabla u\right)-p_{\epsilon}\left(x,\Psi\right)\right|^{p_{1}}dx\right)^{\frac{1}{p_{1}}}\right.\\
	&\qquad+\left.\left(\int_{\Omega}\chi_{2}^{\epsilon}\left(1+\left|\nabla u_{\epsilon}\right|\right)^{p_{2}}\right)^{\frac{1}{q_{1}}}\left(\int_{\Omega}\chi_{2}^{\epsilon}\left|p_{\epsilon}\left(x,M_{\epsilon}\nabla u\right)-p_{\epsilon}\left(x,\Psi\right)\right|^{p_{2}}dx\right)^{\frac{1}{p_{2}}}\right]
\end{align*}
	
As in the proof of Step~2 we see that $$\limsup_{\epsilon\rightarrow0}\left|\int_{\Omega}\left(A_{\epsilon}\left(x,\nabla u_{\epsilon}\right),p_{\epsilon}\left(x,M_{\epsilon}\nabla u\right)-p_{\epsilon}\left(x,\Psi\right)\right)dx\right|\leq C\left(\delta^{\frac{1}{p_{1}-1}}+\delta^{\frac{1}{p_{2}-1}}\right),$$ where $C$ does not depend on $\delta$.

Hence, proceeding as in Step~2, we find that
\begin{align*}
	\displaystyle				
	& \limsup_{\epsilon\rightarrow0}\left|\int_{\Omega}\left(A_{\epsilon}\left(x,\nabla u_{\epsilon}\right),p_{\epsilon}\left(x,M_{\epsilon} \nabla u\right)\right)dx-\int_{\Omega}\left(b(\nabla u),\nabla u\right)dx\right|\\
	& \quad\leq C\left(\delta^{\frac{1}{p_{1}-1}}+\delta^{\frac{1}{p_{2}-1}}+0+\left\|b(\nabla u)\right\|_{L^{q_{2}}(\Omega,\mathbb{R}^{n})}\delta^{\frac{1}{p_{1}}}\right),
\end{align*}
where $C$ is independent of $\delta$.  Now since $\delta$ is arbitrarily small, the proof of Step~3 is complete.
\end{proof}

\textbf{STEP~4}
\vspace{1mm}
	
Finally, let us prove that
\begin{equation}
	\label{Fourth}
	\displaystyle \int_{\Omega}\left(A_{\epsilon}\left(x,\nabla u_{\epsilon}(x)\right),\nabla u_{\epsilon}(x)\right)dx
	 \rightarrow\int_{\Omega}\left(b(\nabla u(x)),\nabla u(x)\right)dx \text{, as $\epsilon\rightarrow0$}.
\end{equation}
	
\begin{proof}
	Since
	\begin{equation}
		\displaystyle \int_{\Omega}\left(A_{\epsilon}\left(x,\nabla u_{\epsilon}\right),\nabla u_{\epsilon}\right)dx=\left\langle -div\left(A_{\epsilon}\left(x,\nabla u_{\epsilon}\right)\right),u_{\epsilon}\right\rangle=\left\langle f,u_{\epsilon}\right\rangle,
	\end{equation}	
	\begin{equation}
		\displaystyle \int_{\Omega}\left(b(\nabla u),\nabla u\right)dx=\left\langle -div\left(b\left(\nabla u\right)\right),u\right\rangle=\left\langle f,u\right\rangle,
	\end{equation}
and $u_{\epsilon}\rightharpoonup u$ in $W^{1,p_{1}}(\Omega)$, the result follows immediately.
\end{proof}	

Finally, Theorem~\ref{corrector} follows from (\ref{First}), (\ref{Second}), (\ref{Third}) and (\ref{Fourth}).
\end{proof}

\subsection{Proof of the Lower Bound on the Amplification of the Macroscopic Field by the Microstructure}
\label{proof fluctuations}

The sequence $\left\{\chi_{i}^{\epsilon}(x)\nabla u_{\epsilon}(x)\right\}_{\epsilon>0}$ has a Young measure 
$\nu^{i}=\left\{\nu_{x}^{i}\right\}_{x\in\Omega}$ associated to it (see Theorem~6.2 and the discussion following 
in \cite{Pedregal1997}), for $i=1,2$.

As a consequence of Theorem~\ref{corrector} proved in the previous section, we have that  $$\left\|\chi_{i}^{\epsilon}(x)p\left(\frac{x}{\epsilon},M_{\epsilon}(\nabla u)(x)\right)- \chi_{i}^{\epsilon}(x)\nabla u_{\epsilon}(x)\right\|_{\textbf{L}^{p_{i}}(\Omega;\mathbb{R}^{n})}\rightarrow0,$$
as $\epsilon\rightarrow0$, which implies that the sequences$$\left\{\chi_{i}^{\epsilon}(x)p\left(\frac{x}{\epsilon},M_{\epsilon}(\nabla u)(x)\right)\right\}_{\epsilon>0} \text{  and  } \left\{\chi_{i}^{\epsilon}(x)\nabla u_{\epsilon}(x)\right\}_{\epsilon>0}$$share the same Young measure (see Lemma~6.3 of \cite{Pedregal1997}), for $i=1,2$.
  
The next lemma identifies the Young measure $\nu^i$.
  
\begin{lemma}
\label{lemmafluctuations}
For all $\phi\in C_{0}(\mathbb{R}^{n})$ and for all $\zeta\in C^{\infty}_{0}(\mathbb{R}^{n})$, we have
\begin{equation}
	\label{field-1}
 	\displaystyle
	\int_{\Omega}\zeta(x)\int_{\textbf{R}^{n}}\phi(\lambda)d\nu_{x}^{i}(\lambda)dx=\int_{\Omega}\zeta(x)\int_{Y}\phi(\chi_{i}(y)p(y,\nabla u(x)))dydx
\end{equation}
\end{lemma}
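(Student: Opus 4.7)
My plan is to exploit the corrector theorem together with the fact that two sequences differing by a null sequence in $L^{p_i}$ share the same Young measure. By Theorem~\ref{corrector}, the sequences $\{\chi_i^\epsilon(x)\nabla u_\epsilon(x)\}_{\epsilon>0}$ and $\{\chi_i^\epsilon(x) p(x/\epsilon,M_\epsilon(\nabla u)(x))\}_{\epsilon>0}$ differ in $L^{p_i}(\Omega;\mathbb{R}^n)$ by a null sequence, hence by Lemma~6.3 of \cite{Pedregal1997} they generate the same Young measure $\nu^i$. Thus it suffices to show that
\begin{equation*}
\lim_{\epsilon\to 0}\int_{\Omega}\zeta(x)\,\phi\!\left(\chi_i^\epsilon(x)p\!\left(\tfrac{x}{\epsilon},M_\epsilon(\nabla u)(x)\right)\right)dx=\int_{\Omega}\zeta(x)\int_{Y}\phi(\chi_i(y)p(y,\nabla u(x)))\,dy\,dx.
\end{equation*}

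First I would reduce to piecewise constant macroscopic gradients. Given $\delta>0$, by Theorem~\ref{regularity} we have $\nabla u\in L^{p_2}(\Omega;\mathbb{R}^n)$, so there is a simple function $\Psi=\sum_{j=0}^{m}\eta_j\chi_{\Omega_j}$ satisfying the assumptions of Lemma~\ref{lemma3} with $\|\nabla u-\Psi\|_{L^{p_2}(\Omega;\mathbb{R}^n)}<\delta$. Since $\phi\in C_0(\mathbb{R}^n)$ is uniformly continuous and bounded, and since Lemma~\ref{lemma3} gives
\begin{equation*}
\limsup_{\epsilon\to 0}\sum_{i=1}^{2}\int_{\Omega}\chi_i^\epsilon(x)\left|p_\epsilon(x,M_\epsilon\nabla u(x))-p_\epsilon(x,\Psi(x))\right|^{p_i}dx\le C(\delta^{q_1}+\delta^{q_2}),
\end{equation*}
the corresponding arguments of $\phi$ converge to each other in measure on $\Omega$ as first $\epsilon\to 0$ and then $\delta\to 0$. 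Because $\phi$ is bounded and $\zeta$ has compact support, dominated convergence then lets me replace $M_\epsilon\nabla u$ by $\Psi$ up to an error vanishing with $\delta$.

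Next I handle the integral with $\Psi$ in place of $M_\epsilon\nabla u$, splitting it as
\begin{equation*}
\sum_{j=0}^{m}\int_{\Omega_j}\zeta(x)\,\phi\!\left(\chi_i^\epsilon(x)p\!\left(\tfrac{x}{\epsilon},\eta_j\right)\right)dx.
\end{equation*}
For each fixed $j$, the map $y\mapsto \phi(\chi_i(y)p(y,\eta_j))$ is $Y$-periodic and bounded (by $\|\phi\|_\infty$), so the classical Riemann--Lebesgue lemma for periodic functions gives the weak-$*$ $L^\infty$ convergence $\phi(\chi_i^\epsilon(x)p(x/\epsilon,\eta_j))\rightharpoonup^{*}\int_Y\phi(\chi_i(y)p(y,\eta_j))\,dy$. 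Since $\zeta\in L^1$, this passes to the limit in each summand and, after summing, yields
\begin{equation*}
\int_\Omega\zeta(x)\,\phi\!\left(\chi_i^\epsilon(x)p\!\left(\tfrac{x}{\epsilon},\Psi(x)\right)\right)dx\;\longrightarrow\;\int_\Omega\zeta(x)\int_Y\phi(\chi_i(y)p(y,\Psi(x)))\,dy\,dx.
\end{equation*}

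Finally I would remove the simple-function approximation on the right-hand side. Continuity of $\xi\mapsto p(\cdot,\xi)$ in the sense of Lemma~\ref{lemma2}, together with the uniform continuity and boundedness of $\phi$, gives
\begin{equation*}
\left|\int_\Omega\zeta(x)\int_Y\phi(\chi_i(y)p(y,\Psi(x)))\,dy\,dx-\int_\Omega\zeta(x)\int_Y\phi(\chi_i(y)p(y,\nabla u(x)))\,dy\,dx\right|\le\omega(\delta),
\end{equation*}
with $\omega(\delta)\to 0$ as $\delta\to 0$ (using $\|\Psi-\nabla u\|_{L^{p_2}}<\delta$ and the fact that $\nabla u\in L^{p_2}$). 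Chaining the three vanishing errors and invoking the defining property of $\nu^i$ on the left completes the identification \eqref{field-1}. The main obstacle is the simultaneous management of the three limits (the homogenization limit $\epsilon\to 0$, the approximation $\Psi\to\nabla u$, and the local averaging $M_\epsilon\to \mathrm{id}$); the order matters, and the uniform Lemma~\ref{lemma3} bound is what allows $\delta$ to be sent to zero after $\epsilon$.
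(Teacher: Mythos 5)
Your proposal is correct, but it takes a genuinely different route from the paper's proof. Both begin identically: by Theorem~\ref{corrector} and Lemma~6.3 of \cite{Pedregal1997}, the sequences $\{\chi_i^\epsilon\nabla u_\epsilon\}$ and $\{\chi_i^\epsilon p_\epsilon(\cdot,M_\epsilon\nabla u)\}$ share the Young measure $\nu^i$, so one only needs to compute the limit of $\int_\Omega\zeta(x)\phi(\chi_i^\epsilon(x)p_\epsilon(x,M_\epsilon\nabla u(x)))\,dx$. Where you diverge is in how that limit is computed. The paper works directly on the partition into cells $Y_\epsilon^i$: it performs the change of variables $x=x_\epsilon^i+\epsilon y$, uses a Taylor expansion of $\zeta$ (hence the $C_0^\infty$ assumption) to freeze $\zeta$ at $x$, and then bounds the resulting pointwise difference $|\phi(\chi_i(y)p(y,M_\epsilon\nabla u(x)))-\phi(\chi_i(y)p(y,\nabla u(x)))|$ via Lipschitz continuity of $\phi$, H\"older, Lemma~\ref{lemma2}, and the $L^{p_i}$-convergence of $M_\epsilon\nabla u\to\nabla u$. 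No simple-function approximation of $\nabla u$ and no Riemann--Lebesgue lemma appear. You instead follow the architecture of Steps~2 and~3 in the proof of Theorem~\ref{corrector}: approximate $\nabla u$ by a simple function $\Psi$, deploy Lemma~\ref{lemma3} to control the substitution $M_\epsilon\nabla u\mapsto\Psi$ uniformly in $\epsilon$, invoke the Riemann--Lebesgue lemma for the piecewise-constant oscillating integrand, and finally remove $\Psi$ using Lemma~\ref{lemma2} and uniform continuity of $\phi$. The paper's route is shorter and more self-contained (a single unfolding estimate), and it cleverly sidesteps the delicate management of nested limits. Your route is longer (three limits to interchange: $\epsilon$, $\delta$, and the averaging), but has the advantage of using only uniform continuity of $\phi$, whereas the paper appeals to Lipschitz continuity of $\phi\in C_0(\mathbb{R}^n)$, which strictly speaking is not automatic and requires a further density argument. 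Your observation that the order of limits matters and that the $\epsilon$-uniformity of the Lemma~\ref{lemma3} bound is what licenses sending $\delta\to 0$ last is the right thing to emphasize; the argument is sound.
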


\begin{proof}
	To prove (\ref{field-1}), we will show that given $\phi\in C_0(\mathbb{R}^n)$  and $\zeta\in C_0^\infty(\mathbb{R}^n)$ that
\begin{align}
	\label{field-2}
	 &\lim_{\epsilon\rightarrow0}\int_{\Omega}\zeta(x)\phi\left(\chi_{i}^{\epsilon}(x)p\left(\frac{x}{\epsilon},M_{\epsilon}\left(\nabla u\right)(x)\right)\right)dx\notag\\
	 &\quad=\int_{\Omega}\zeta(x)\int_{Y}\phi(\chi_{i}(y)p(y,\nabla u(x)))dydx.
\end{align}

We consider the difference
\begin{align}
	\label{difference}
	\displaystyle 
	& \left|\int_{\Omega}\zeta(x)\phi\left(\chi_{i}\left(\frac{x}{\epsilon}\right)p\left(\frac{x}{\epsilon},M_{\epsilon}(\nabla u)(x)\right)\right)dx - \int_{\Omega}\zeta(x)\int_{Y}\phi\left(\chi_{i}\left(y\right)p\left(y,\nabla u(x)\right)\right)dydx\right|\notag\\
	&\quad\leq\left|\sum_{i\in I_{\epsilon}}\int_{Y_{\epsilon}^{i}}\zeta(x)\phi\left(\chi_{i}\left(\frac{x}{\epsilon}\right)p\left(\frac{x}{\epsilon},\xi_{\epsilon}^{i}\right)\right)dx - \int_{\Omega_{\epsilon}}\zeta(x)\int_{Y}\phi\left(\chi_{i}\left(y\right)p\left(y,\nabla u(x)\right)\right)dydx\right|\notag\\
	&\qquad+C\left|\Omega\setminus\Omega_{\epsilon}\right|.
\end{align}

Note that the term $C\left|\Omega\setminus\Omega_{\epsilon}\right|$ goes to $0$, as $\epsilon\rightarrow0$.  Now set $x_{\epsilon}^{i}$ to be the center of $Y_{\epsilon}^{i}$. On the first integral use the change of variables $x=x_{\epsilon}^{i}+\epsilon y$, where $y$ belongs to $Y$, and since $dx=\epsilon^{n}dy$, we get
\begin{align*}
	\displaystyle 
	& \left|\sum_{i\in I_{\epsilon}}\int_{Y_{\epsilon}^{i}}\zeta(x)\phi\left(\chi_{i}\left(\frac{x}{\epsilon}\right)p\left(\frac{x}{\epsilon},\xi_{\epsilon}^{i}\right)\right)dx - \sum_{i\in I_{\epsilon}}\int_{Y_{\epsilon}^{i}}\zeta(x)\int_{Y}\phi\left(\chi_{i}\left(y\right)p\left(y,\nabla u(x)\right)\right)dydx\right|\\
	&\quad=\left|\sum_{i\in I_{\epsilon}}\epsilon^{n}\int_{Y}\zeta(x_{\epsilon}^{i}+\epsilon y)\phi\left(\chi_{i}\left(y\right)p\left(y,\xi_{\epsilon}^{i}\right)\right)dy\right.\\
	&\qquad\left. - \sum_{i\in I_{\epsilon}}\int_{Y_{\epsilon}^{i}}\zeta(x)\int_{Y}\phi\left(\chi_{i}\left(y\right)p\left(y,\nabla u(x)\right)\right)dydx\right|
	&\intertext{Applying Taylor's expansion for $\zeta$, we have}
	&\quad\leq \left|\sum_{i\in I_{\epsilon}}\int_{Y_{\epsilon}^{i}}\int_{Y}\left(\zeta(x)+CO(\epsilon)\right)\left[\phi\left(\chi_{i}\left(y\right)p\left(y,\xi_{\epsilon}^{i}\right)\right)- \phi\left(\chi_{i}\left(y\right)p\left(y,\nabla u(x)\right)\right)\right]dydx\right|\\
	&\qquad+CO(\epsilon)\\
	&\quad\leq\left|\int_{\Omega_{\epsilon}}\left|\zeta(x)\right|\int_{Y}\left|\phi\left(\chi_{i}\left(y\right)p\left(y,M_{\epsilon}\nabla u(x)\right)\right)- \phi\left(\chi_{i}\left(y\right)p\left(y,\nabla u(x)\right)\right)\right|dydx\right|\\
	&\qquad+CO(\epsilon)
	&\intertext{Because of the uniform Lipschitz continuity of $\phi$, we get}
	&\quad\leq C\left|\int_{\Omega_{\epsilon}}\left|\zeta(x)\right|\int_{Y}\left|p\left(y,M_{\epsilon}\nabla u(x)\right)- p\left(y,\nabla u(x)\right)\right|dydx\right| + CO(\epsilon)
	&\intertext{By H\"older's inequality twice and Lemma~\ref{lemma2}, we have}
	&\quad\leq C\left\{\left(\int_{\Omega_{\epsilon}}\left|\zeta(x)\right|^{q_{2}}dx\right)^{1/q_{2}}\left[\int_{\Omega_{\epsilon}}\Big(\left|M_{\epsilon}\nabla u(x)-\nabla u(x)\right|^{\frac{p_{1}}{p_{1}-1}}\theta_{1}^{\frac{1}{p_{1}-1}}\right.\right.\\
	&\qquad\times \left(1+\left|M_{\epsilon}\nabla u(x)\right|^{p_{1}}\theta_{1}+\left|M_{\epsilon} \nabla u(x)\right|^{p_{2}}\theta_{2}+\left|\nabla u(x)\right|^{p_{1}}\theta_{1}+\left|\nabla u(x)\right|^{p_{2}}\theta_{2}\right)^{\frac{p_{1}-2}{p_{1}-1}}\\
	&\quad+\left|M_{\epsilon}\nabla u(x)-\nabla u(x)\right|^{\frac{p_{2}}{p_{2}-1}}\theta_{2}^{\frac{1}{p_{2}-1}}\\
	&\qquad\times\left.\left(1+\left|M_{\epsilon}\nabla u(x)\right|^{p_{1}}\theta_{1}+\left|M_{\epsilon}\nabla u(x)\right|^{p_{2}}\theta_{2}+\left|\nabla u(x)\right|^{p_{1}}\theta_{1}+\left|\nabla u(x)\right|^{p_{2}}\theta_{2}\right)^{\frac{p_{2}-2}{p_{2}-1}}\Big)dx\right]^{1/p_{1}}\\
	&\quad+\left(\int_{\Omega_{\epsilon}}\left|\zeta(x)\right|^{q_{1}}dx\right)^{1/q_{1}}\left[\int_{\Omega_{\epsilon}}\left(\left|M_{\epsilon}\nabla u(x)-\nabla u(x)\right|^{\frac{p_{1}}{p_{1}-1}}\theta_{1}^{\frac{1}{p_{1}-1}}\right.\right.\\
	&\qquad\times\left(1+\left|M_{\epsilon}\nabla u(x)\right|^{p_{1}}\theta_{1}+\left|M_{\epsilon} \nabla u(x)\right|^{p_{2}}\theta_{2}+\left|\nabla u(x)\right|^{p_{1}}\theta_{1}+\left|\nabla u(x)\right|^{p_{2}}\theta_{2}\right)^{\frac{p_{1}-2}{p_{1}-1}}\\
	&\quad+\left|M_{\epsilon}\nabla u(x)-\nabla u(x)\right|^{\frac{p_{2}}{p_{2}-1}}\theta_{2}^{\frac{1}{p_{2}-1}}\\
	&\qquad\times\left.\left.\left.\left(1+\left|M_{\epsilon}\nabla u(x)\right|^{p_{1}}\theta_{1}+\left|M_{\epsilon}\nabla u(x)\right|^{p_{2}}\theta_{2}+\left|\nabla u(x)\right|^{p_{1}}\theta_{1}+\left|\nabla u(x)\right|^{p_{2}}\theta_{2}\right)^{\frac{p_{2}-2}{p_{2}-1}}\right)dx\right]^{1/p_{2}}\right\}\\
	&\quad+CO(\epsilon)
	&\intertext{Applying H\"older's inequality again, we get} 
	&\quad\leq C\left[\left(\int_{\Omega_{\epsilon}}\left|M_{\epsilon}\nabla u(x)-\nabla u(x)\right|^{p_{1}}dx\right)^{\frac{1}{p_{1}-1}}\right.\\
	&\qquad+\left. \left(\int_{\Omega_{\epsilon}}\left|M_{\epsilon}\nabla u(x)-\nabla u(x)\right|^{p_{2}}dx\right)^{\frac{1}{p_{2}-1}}\right]^{1/p_{1}}\\
	&\quad + C\left[\left(\int_{\Omega_{\epsilon}}\left|M_{\epsilon}\nabla u(x)-\nabla u(x)\right|^{p_{1}}dx\right)^{\frac{1}{p_{1}-1}}dx\right.\\
	&\qquad \left. +\left(\int_{\Omega_{\epsilon}}\left|M_{\epsilon}\nabla u(x)-\nabla u(x)\right|^{p_{2}}dx\right)^{\frac{1}{p_{2}-1}}\right]^{1/p_{2}}+CO(\epsilon).
\end{align*}

Finally, from the approximation property of $M_{\epsilon}$ in Section~\ref{CorrectorSection}, as $\epsilon\rightarrow0$, we obtain (\ref{field-2}).

Therefore, from Proposition~4.4 of \cite{Pedregal1999} and (\ref{field-2}) we have
\begin{align*}
 	\displaystyle
	\int_{\Omega}\zeta(x)\int_{\textbf{R}^{n}}\phi(\lambda)d\nu_{x}^{i}(\lambda)dx
 	& = \int_{\Omega}\zeta(x)\int_{Y}\phi(\chi_{i}(y)p(y,\nabla u(x)))dydx\\
 	& =  \lim_{\epsilon\rightarrow0}\int_{\Omega}\zeta(x)\phi\left(\chi_{i}^{\epsilon}(x)p\left(\frac{x}{\epsilon},M_{\epsilon}(\nabla u)(x)\right)\right)dx\\
 	& \leq \lim_{\epsilon\rightarrow0}\int_{\Omega}\zeta(x)\phi\left(\chi_{i}^{\epsilon}(x)\nabla u_{\epsilon}(x)\right)dx, 
\end{align*}
for all $\phi\in C_{0}(\mathbb{R}^{n})$ and for all $\zeta\in C^{\infty}_{0}(\mathbb{R}^{n})$. 
\end{proof}

The proof of Theorem~\ref{fluctuations} follows from Lemma~\ref{lemmafluctuations} and Theorem~6.11 in \cite{Pedregal1997}.
	
\section{Summary}
\label{conclusions}

In this paper we consider a composite material made from two  materials with different power law behavior. The exponent of the power law is different for each material and taken to be $p_{1}$  in material one and $p_{2}$ in material two with $2\leq p_1< p_2<\infty$.
For this case we have introduced a   corrector theory for the strong approximation of fields inside these
composites, see Theorem~\ref{corrector}.   The correctors are then used to provide lower bounds on the local singularity strength inside micro-structured media. The bounds are multi-scale in nature and quantify the amplification of 
applied macroscopic fields by the microstructure, see Theorem~\ref{fluctuations}. 
These results are shown to hold for finely mixed periodic dispersions of inclusions and for layers.  Future work seeks to extend the analysis to multi-phase power law materials and for different regimes of exponents $p_1$ and $p_2$.
\bibliographystyle{plain}	
\bibliography{paper}
\end{document}